\documentclass[11pt,reqno]{amsart}
\usepackage{times}
\usepackage[latin1]{inputenc}
\usepackage[T1]{fontenc}
\usepackage{graphics, color, amsmath, amsfonts, amssymb, amsthm, amscd, latexsym, euscript}

\usepackage[dvips]{graphicx}
\usepackage[margin=1in]{geometry}

\def\C{{\mathbb C}}

\def\R{{\mathbb R}}

\def\e{{\varepsilon}}

\def\s{{\sigma}}

\def\a{{\alpha}}

\def\Re{\mbox{Re}}

\def\+R{+_{_{ \!\! \R}}}

\def\curl{{\,\mbox{curl}\,}}

\def\hat{\widehat}
\def\wt{\widetilde}
\def\bar{\overline}

\def\S{Schr\"{o}dinger }

\DeclareMathAlphabet{\mathpzc}{OT1}{pzc}{m}{it}

\numberwithin{equation}{section}

\begin{document}

\newtheorem{theo}{Theorem}[section]
\newtheorem{pro}[theo]{Proposition}
\newtheorem{lem}[theo]{Lemma}
\newtheorem{defin}[theo]{Definition}
\newtheorem{rem}[theo]{Remark}
\newtheorem{cor}[theo]{Corollary}

\setcounter{tocdepth}{2}
\setcounter{secnumdepth}{4}

\title[Fractional NLS in one dimension]{Nonlinear fractional Schr\"{o}dinger equations in one dimension}

\author{Alexandru D. Ionescu}
\address{Princeton University}
\email{aionescu@math.princeton.edu}

\author{Fabio Pusateri}
\address{Princeton University}
\email{fabiop@math.princeton.edu}

\thanks{The first author was partially supported by a Packard Fellowship and NSF grant DMS-1065710.}

\begin{abstract} We consider the question of global existence of small, smooth, and localized solutions of a certain 
fractional semilinear cubic NLS in one dimension, 
\begin{equation*}
i \partial_t u -\Lambda u = c_0{|u|}^2 u + c_1 u^3 + c_2 u \bar{u}^2  + c_3 \bar{u}^3, \qquad \Lambda = \Lambda(\partial_x) = {|\partial_x|}^\frac{1}{2},
\end{equation*}
where $c_0\in\mathbb{R}$ and $c_1,c_2,c_3\in\mathbb{C}$. This model is motivated by the two-dimensional water waves equations, 
which have a somewhat similar structure in the Eulerian formulation, in the case of irrotational flows. 
We show that one cannot expect linear scattering, even in this simplified model. 
More precisely, we identify a suitable nonlinear logarithmic correction, and prove global existence and modified scattering of solutions. 
\end{abstract}

\maketitle

\tableofcontents

\section{Introduction}
We consider the Cauchy problem for a class of fractional nonlinear \S (NLS) equations in dimension one with cubic nonlinearities:
\begin{equation}\label{model}
i \partial_t u -\Lambda u = c_0{|u|}^2 u + c_1 u^3 + c_2 u \bar{u}^2  + c_3 \bar{u}^3, \qquad \Lambda = \Lambda(\partial_x) = {|\partial_x|}^\frac{1}{2},
\end{equation}
where $u: \R_t \times \R_x \rightarrow \C$, $c_0\in\mathbb{R}$, and $c_1,c_2,c_3 \in \C$. This model is motivated by the question of global existence of solutions of the two-dimensional water wave equation, see subsection \ref{WWmot} for a longer discussion.

We are interested in the Cauchy problem for small initial data $ { u(t,x) |}_{t=0} := u_0(x)$ given in a suitable weighted Sobolev space.
We investigate the global existence and long time behaviour of solutions to \eqref{model}. More precisely, we prove the following:

\begin{theo}\label{maintheo}
Assume that $N_0:=100$, $p_0\in(0,1/1000]$ is fixed, and $u_0\in H^{N_0}(\mathbb{R})$ satisfies
\begin{equation}\label{mainhyp}
\|u_0\|_{H^{N_0}}+\|x\cdot\partial u_0\|_{L^2}+\|(1+|\xi|)^{10}\widehat{u_0}(\xi)\|_{L^\infty_\xi}=\varepsilon_0\leq\overline{\varepsilon},
\end{equation}
for some constant $\overline{\varepsilon}$ sufficiently small (depending only on the value of $p_0$). Then there is a unique global solution 
$u\in C([0,\infty):H^{N_0}(\mathbb{R}))$ of the initial-value problem
\begin{equation}\label{IVP0}
i \partial_t u -\Lambda u =c_0 {|u|}^2 u + c_1 u^3 + c_2 u \bar{u}^2  + c_3 \bar{u}^3, \qquad u(0)=u_0.
\end{equation}
In addition, letting $f(t):=e^{it\Lambda}u(t)$, we have the uniform bounds
\begin{equation}\label{mainconcl1}
\sup_{t\in[0,\infty)}\big[(1+t)^{-p_0}\|f(t)\|_{H^{N_0}}+(1+t)^{-p_0}\|x\cdot (\partial f)(t)\|_{L^2}+
\|\,(1+|\xi|)^{10}\widehat{f}(\xi,t)\|_{L^\infty_\xi}\big]\lesssim\varepsilon_0.
\end{equation}
Furthermore the solution possesses the following modified scattering behavior: there is $p_1>0$ and $w_\infty\in L^\infty$ with the property that
\begin{equation}\label{mainconcl2}
\sup_{t\in[0,\infty)}(1+t)^{p_1}{ \left\|   \exp \left( i \frac{2c_0}{\pi}\int_0^t {|\xi|}^{3/2} {\left| \hat{f}(s,\xi) \right|}^2 \frac{ds}{s+1} \right) 
      (1+|\xi|)^{10} \widehat{f}(\xi,t)  -  w_\infty(\xi)  \right\|}_{L^\infty_\xi}\lesssim \varepsilon_0.
\end{equation}
\end{theo}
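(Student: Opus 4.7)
I would run a standard bootstrap on
\[
\|f\|_{X(t)}:=(1+t)^{-p_0}\|f(t)\|_{H^{N_0}}+(1+t)^{-p_0}\|x\partial f(t)\|_{L^2}+\|(1+|\xi|)^{10}\widehat{f}(\xi,t)\|_{L^\infty_\xi},
\]
assuming $\|f\|_{X(t)}\leq\varepsilon_1:=C\varepsilon_0$ on $[0,T]$ and improving this to $\varepsilon_1/2$. A preliminary linear-dispersive step converts the Fourier $L^\infty_\xi$ information into pointwise decay: stationary phase applied to $u(x,t)=\frac{1}{2\pi}\int e^{ix\xi-it|\xi|^{1/2}}\widehat f(\xi,t)\,d\xi$, whose phase has a unique stationary point $\xi_\ast(x,t)=\mathrm{sign}(x)\,t^2/(4x^2)$ with Hessian of size $t\,|\xi_\ast|^{-3/2}$, yields $\|u(t)\|_{L^\infty_x}\lesssim (1+t)^{-1/2}\|f\|_{X(t)}$.

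\textbf{Energy and weighted bounds.} With $\|u\|_{L^\infty}^2\lesssim\varepsilon_1^2/(1+t)$ in hand, the classical cubic energy identity gives $\tfrac{d}{dt}\|u\|_{H^{N_0}}^2\lesssim\varepsilon_1^4/(1+t)$, whose integral yields the allowed $(1+t)^{p_0}$ growth. For the weighted norm I differentiate the Duhamel representation of $\widehat f$ in $\xi$; the terms where $\partial_\xi$ falls on a profile are estimated bilinearly via $\|x\partial f\|_{L^2}\|u\|_{L^\infty}^2$, while those where $\partial_\xi$ hits the oscillatory factor produce $t\,\partial_\xi\Phi$, which is controlled after localizing away from the stationary set and again paying a factor $(1+t)^{-1}$ from dispersion.

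\textbf{Fourier $L^\infty_\xi$ bound and modified scattering.} In Fourier
\[
\partial_t\widehat f(\xi,t)=-i\sum_{j=0}^{3}c_j\iint e^{it\Phi_j(\xi,\alpha,\sigma)}\mathfrak{m}_j(\widehat f)\,d\alpha\,d\sigma,
\]
with, for instance, $\Phi_0=|\xi|^{1/2}-|\alpha|^{1/2}+|\sigma|^{1/2}-|\xi-\alpha+\sigma|^{1/2}$ coming from $|u|^2u$. For $j=1,2,3$ the stationary point of $\Phi_j$ in $(\alpha,\sigma)$ lies off $\{\Phi_j=0\}$ (time-nonresonance); the contribution is handled by the normal form $e^{it\Phi_j}=\partial_t(e^{it\Phi_j})/(i\Phi_j)$, whose boundary terms and quartic interior terms are of sizes $\varepsilon_1^3/t$ and $\varepsilon_1^4/t^{3/2-}$, both integrable. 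The resonant case is $j=0$: the stationary point $\alpha=\sigma=\xi$ lies on $\{\Phi_0=0\}$, with Hessian
\[
\det\Phi_0''\big|_{(\xi,\xi)}=-\frac{1}{16|\xi|^3},\qquad \mathrm{sgn}(\Phi_0'')=0.
\]
Two-dimensional stationary phase (combined with a cutoff to an $O(t^{-1/2+})$-neighborhood of the stationary point and integration by parts in $(\alpha,\sigma)$ on the complement) extracts
\[
-ic_0\iint e^{it\Phi_0}\widehat f(\alpha)\overline{\widehat f(\sigma)}\widehat f(\xi-\alpha+\sigma)\,d\alpha\,d\sigma=-i\,\frac{2c_0}{\pi t}|\xi|^{3/2}|\widehat f(\xi,t)|^2\widehat f(\xi,t)+R(\xi,t),
\]
where the constant $\tfrac{2c_0}{\pi}$ matches \eqref{mainconcl2} and $\|R(t)\|_{L^\infty_\xi}\lesssim\varepsilon_1^3(1+t)^{-1-\delta}$ for some $\delta>0$. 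Defining the modified profile
\[
w(\xi,t):=\exp\!\Big(i\,\tfrac{2c_0}{\pi}\!\int_0^t|\xi|^{3/2}|\widehat f(\xi,s)|^2\tfrac{ds}{s+1}\Big)\widehat f(\xi,t)
\]
(with real phase since $c_0\in\mathbb R$, so $|w|=|\widehat f|$), one finds $\|\partial_t w(t)\|_{L^\infty_\xi}\lesssim\varepsilon_1^3(1+t)^{-1-\delta}$. This simultaneously closes $\|(1+|\xi|)^{10}\widehat f\|_{L^\infty_\xi}\leq\varepsilon_1/2$ and produces a limit $w_\infty\in L^\infty_\xi$ with $\|w(t)-w_\infty\|_{L^\infty_\xi}\lesssim\varepsilon_0(1+t)^{-\delta}$, which gives \eqref{mainconcl2} upon multiplying by $(1+|\xi|)^{10}$ and setting $p_1=\delta$.

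\textbf{Main obstacle.} The hard step is the uniform-in-$\xi$ stationary-phase expansion for the $|u|^2u$ term. Because the Hessian has size $|\xi|^{-3/2}$, low ($|\xi|\ll 1$) and high ($|\xi|\gg 1$) frequencies require separate treatment via a Littlewood--Paley decomposition, and the remainder $R$ must gain a genuine $t^{-\delta}$ beyond the leading $1/t$. Producing this gain forces one to integrate by parts in $(\alpha,\sigma)$ in the non-stationary region, which trades oscillation for a $\partial_\xi\widehat f$-type weight; that weight is controlled precisely by $\|x\partial f\|_{L^2}$ from the bootstrap, and its $t^{p_0}$ growth is compatible with $\delta$ exactly because $p_0\leq 1/1000$. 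The remaining ingredients---the non-resonant normal forms, the energy identity, and the convergence $w(t)\to w_\infty$---are comparatively routine once this delicate expansion is in place.
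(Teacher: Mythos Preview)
Your outline matches the paper's strategy closely: bootstrap on the same three-part norm, a stationary-phase dispersive estimate (the paper's Lemma~2.3), an energy inequality driven by $\|u\|_{L^\infty}^2\lesssim\varepsilon_1^2/(1+t)$, a stationary-phase expansion of the $|u|^2u$ integral isolating the resonant contribution $\tfrac{8\pi|\xi|^{3/2}}{t}|\widehat f(\xi)|^2\widehat f(\xi)$ (your Hessian and constant are correct), time-normal-forms for the non-resonant cubics, and the modified profile $w=e^{iH}\widehat f$ to close the $L^\infty_\xi$ bound and obtain scattering.

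The one place your sketch diverges from the paper---and where your wording is too optimistic---is the weighted estimate. You propose to handle the term where $\partial_\xi$ hits the oscillatory factor by ``localizing away from the stationary set and paying a factor $(1+t)^{-1}$ from dispersion.'' That is not how the paper proceeds, and it is not clear your route closes as stated: the raw term $t\,\partial_\xi\Phi$ contains pieces like $t\Lambda'(\xi)$ that are pure $\xi$-multipliers with no smallness, and the stationary set is in the \emph{integration} variables, not in $\xi$. The paper instead exploits the homogeneity of $\Lambda$ via the algebraic identity
\[
\xi\partial_\xi\psi=-\eta\partial_\eta\psi-\sigma\partial_\sigma\psi+\tfrac12\psi,
\]
which converts $\xi\partial_\xi$ acting on the phase into (i) $\eta,\sigma$-derivatives that, after integration by parts, land on the input profiles and are controlled by $\|x\partial f\|_{L^2}\|u\|_{L^\infty}^2$, and (ii) a factor $\tfrac{it}{2}\psi\,e^{it\psi}=\tfrac12\partial_t(t\,e^{it\psi})-\tfrac12 e^{it\psi}$, which is handled by integration by parts in $s$ (a normal form). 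This identity is the clean substitute for the localization you suggest; without it the weighted bound requires substantially more work. Everything else in your proposal is in line with the paper.
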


{\bf{Remark:}} We emphasize that it is important to identify the correct logarithmic correction that describes 
the asymptotic behavior of solutions in \eqref{mainconcl2}, even if one is only interested in the question of global 
existence of smooth solutions. Without identifying such a logarithmic correction, it seems that one could only prove almost global existence, i.e. with a time of existence 
$T\approx e^{c/\varepsilon_0}$. This is consistent with the almost global existence result of Wu \cite{WuAG}, in the case of the irrotational two-dimensional water wave problem.  

\subsection{Previous results on modified scattering}
There is a large amount of literature dealing with the problem of global existence and asymptotic behavior of small solutions of 
nonlinear dispersive PDEs. Some key developments include the work of John \cite{Joh} showing that blow-up in finite time {\it can} happen even 
for small smooth localized initial data of a semilinear wave equation, the introduction of the vector field method by Klainerman \cite{Kl} 
and of the normal form transformation by Shatah \cite{Sh}, and the understanding of the role of "null structures", starting with the works of 
Klainerman \cite{Kl2} and Christodoulou \cite{Ch}. One of the main objective is to show that solutions evolving from small, sufficiently regular and localized data, 
behave like solutions to the linear equation. 

If the effects of the nonlinearity become negligible when time tends to infinity, solutions are said to scatter to a linear 
asymptotic state. However, there are several important examples of equations whose small solutions do not behave like linear ones,
as it is the case for the fractional \S equation \eqref{model}.
In what follows we give a brief account of some previous results concerning the nonlinear \S equation, which is the most closely related to our problem,
and a few other dispersive equations. 
We will then point out some important connections between \eqref{model} and the water waves system in $2$ dimensions.

Let us start by considering the \S equation
\begin{equation}
\label{NLS}
i \partial_t u + \Delta u = N(u,\bar{u}) ,
\end{equation}
where $u : \R_t \times \R_x^d \rightarrow \C$,
and $N$ is a nonlinear function of $u$ and its conjugate $\bar{u}$.
For $N = {|u|}^{p-1} u$ one distinguishes the short range case $p > 1 + \frac{2}{n}$ and the long range case $p \leq 1 + \frac{2}{n}$.
A simple explanation for this distinction is the fact that the nonlinearity computed on a linear solution
is integrable in time in the short range case, whereas it is not integrable in the long range case.
In the short range case wave operators can be contructed in general for small data \cite{GOVNLS,NakaNLS}.
The situation is quite different in the long range case, where it is known since \cite{Barab} 
that in one dimension nontrivial asymptotically free solutions cannot exist.
Ozawa \cite{ozawa} showed that long range scattering (i.e scattering to a nonlinear profile) occurs in the critical case $p=3$ in one dimension. 
Hayashi and Naumkin \cite{HN} showed the same result in two and three dimension,
and also in the case of the Hartree equation in $d \geq 2$.
%
%
A different proof for the $1$d NLS and the Hartree equations was given by the second author and Kato in \cite{KP}.
We point out here that one of the key ingredients in \cite{HN} is an explicit factorization of the linear \S semigroup,
which may not be available in the case of other equations, such as the one considered in this paper.
As shown in \cite{KP}, a ``stationary phase'' type argument, inspired by the Fourier analysis of \cite{GMS1},
can serve as a substitute for such a factorization. 
This type of argument is going to be an important ingredient in the proof of Theorem \ref{maintheo}.

The problem for \eqref{NLS} with general cubic nonlinearities  has also been studied extensively.
For the same nonlinearity as in \eqref{model}, global solutions, 
again possessing a modified asymptotic behavior, were constructed in \cite{HNcubicNLSodd} for odd initial data\footnote{We refer the reader 
to the  works referenced in the introduction of \cite{HNcubicNLSodd} 
for more results about the long time behavior of solutions to cubic NLS equations with non-gauge invariant nonlinearities in one dimension.}.
Works concerning other dispersive equations, which address the existence of small solutions for long range nonlinearities,
and in particular the question of modified scattering, include \cite{HNKdV,HNBO,DelortKG1d}. 



\subsection{Motivation: Water Waves in two dimensions}\label{WWmot}
Our main interest in the fractional NLS model \eqref{model} comes from the study of the long-time behavior
of solutions to the water waves equations on $\R^2$.
In particular, as we shall describe below, $\Lambda = {|\partial_x|}^{1/2}$ is the dispersion relation of the 
linearized gravity water waves equations for one dimensional surfaces. 
Furthermore, thanks to the absence of resonances at the quadratic level, one expects the nonlinear dynamics 
of water waves to be governed by nonlinearities of cubic type\footnote{
This is indeed the case in three space dimensions \cite{GMS2,Wu3DWW}.} 
like those appearing in our model.

The evolution of an inviscid perfect fluid that occupies a domain $\Omega_t$ in $\R^n$ ($n \geq 2$) at time $t$,
is described by the free boundary incompressible Euler equations.
If $v$ and $p$ denote respectively the velocity and the pressure of the fluid (which is assumed to have constant density equal to $1$),
these equations are:
\begin{equation}
\tag{E}
\label{E}
\left\{
\begin{array}{ll}
(v_t + v \cdot \nabla v) = - \nabla p - g e_n  &   x \in \Omega_t
\\
\nabla \cdot v = 0    &   x \in \Omega_t
\\
v (0,x) = v^0 (x)     &   x \in \Omega_0 \, , 
\end{array}
\right.
\end{equation}
where $g$ is the gravitational constant. 
The free surface $S_t := \partial \Omega_t$ moves with the normal component of the velocity,
and, in absence of surface tension, the pressure vanishes on the boundary:
\begin{equation}
\tag{BC}
\label{BC}
\left\{
\begin{array}{l}
\partial_t + v \cdot \nabla  \,\, \mbox{is tangent to} \,\, \bigcup_t S_t \subset \R^{n+1}
\\
p (t,x) = 0  \,\, , \,\,\, x \in S_t  \, .
\end{array}
\right.
\end{equation}
Following the breakthrough of Wu \cite{Wuloc1,Wuloc2}
who showed wellposedness for data in Sobolev spaces for the irrotational problem ($\curl v = 0$)
with infinite depth, there has been considerable amount of work on the local well-posedness of \eqref{E}-\eqref{BC}.
See for example \cite{Lindblad,Lannes,ShZ3}. See also references therein for earlier works on this problem.

In the case of irrotational flows one can reduce \eqref{E}-\eqref{BC} to a system on the boundary.
Assume that $\Omega_t \subset \R^2$ is the region below the graph of a function $h : \R_x \times \R_t \rightarrow \R$,
that is $\Omega_t = \{ (x,y) \in \R^2 \, : y \leq h(x,t) \}$.
Let us denote by $\Phi$ the velocity potential: $\nabla \Phi(t,x,y) = v (t,x,y)$, for $(x,y) \in \Omega_t$.
If $\phi(t,x) := \Phi (t, x, h(x,t))$ is the restriction of $\Phi$ to the boundary $S_t$,
the equations of motion reduce to the system\footnote{This nontrivial rewriting of the equations
is based upon an expansions of the Dirichlet to Neumann operator associated to the domain $\Omega_t$ for small
perturbations of a flat surface. Here we are taking $g = 1$.}
\cite{SulemBook}
\begin{equation}
\label{Euler}
\left\{
\begin{array}{rl}
\partial_t h  & = {|\partial_x|} \phi  - \partial_x( h  \partial_x \phi) - {|\partial_x|} (h {|\partial_x|} \phi)
	  \\
	  & - \frac{1}{2} {|\partial_x|} \left[  h^2 {|\partial_x|}^2 \phi + {|\partial_x|} (h^2 {|\partial_x|} \phi)
	  - 2 (h {|\partial_x|} (h {|\partial_x|} \phi ) )  \right] 
	  + R_1
\\
\partial_t \phi  & =  - h - \frac{1}{2} {|\phi_x|}^2 + \frac{1}{2} {|{|\partial_x|} \phi|}^2 + 
	  {|\partial_x|} \phi \left[ h {|\partial_x|}^2 \phi - {|\partial_x|} (h {|\partial_x|} \phi) \right]
	  + R_2
\end{array}
\right.
\end{equation}
where $R_1$ and $R_2$ are terms of order $4$ or higher.
Defining $u := h + i \Lambda \phi$, \eqref{Euler} can be reduced to a scalar equation of the form
\begin{equation}
\label{WWmodel}
i \partial_t u - \Lambda u = Q (u,\bar{u}) + C (u,\bar{u}) + R (u,\bar{u}) ,
\end{equation}
where $Q$ is a quadratic form of $u$ and $\bar{u}$, $C$ denotes cubic terms and $R$ denotes quartic and higher order terms.
$Q,C$ and $R$ in \eqref{WWmodel} are of course determined by the nonlinearities in \eqref{Euler}.
We refer to  \cite[chap. 11]{SulemBook} for the derivation of the  water wave equations
and to \cite[sec. 3]{GMS2} for the explicit form of \eqref{WWmodel}.

Unlike our model \eqref{model}, the water waves equations \eqref{WWmodel} contain quadratic terms.
Since the pointwise decay of a linear solution is $t^{-1/2}$, quadratic terms are far from having integrable-in-time $L^2$ norm,
and this makes \eqref{WWmodel} supercritical with respect to scattering.
On the other hand, it is well known, see for example \cite{Craig,CW,GMS2}, 
that the gravity water waves equations present no quadratic resonances. 
This allows to find a bilinear change of variables $v = u + B(u,u)$, such that the new unknown $v$ satisfies an equation of the form
\begin{equation}
\label{WWcubicmodel}
i \partial_t v - \Lambda v = \wt{C} (v,\bar{v}) +  \wt{R} (u,\bar{u}) ,
\end{equation}
where $\wt{C}$ is a cubic nonlinearity in $v$ and $\bar{v}$, and $\wt{R}$ denotes quartic and higher order terms. This normal form transformation 
eliminating the quadratic terms, plays a crucial role in \cite{GMS2} where the authors obtain global existence of small solutions
to the gravity water waves equations in three space dimension, i.e. in the case of two dimensional surfaces.

While both \eqref{model} and \eqref{WWcubicmodel} have cubic nonlinearities,
it is important to remark that the nonlinearity in \eqref{WWcubicmodel}, as well as that of \eqref{WWmodel}, contains derivatives of the unknown.
This fact poses great additional difficulty in both the local and global Cauchy theory for the water waves system.
Equation \eqref{model} admits straightforward energy estimates, but it is not at all clear whether \eqref{WWcubicmodel} 
does as well, at least in the basic Eulerian formulation described above.
As far as dispersive estimates are concerned, such a difficulty can be overcome fairly easily in the case of two dimensional surfaces \cite{GMS2}, 
since the decay of linear solutions is $t^{-1}$, and energy estimates can be proven separately via a different fomulation of the equations \cite{ShZ3}.
We refer the reader also to the work of Wu \cite{Wu3DWW} for a different proof of the global existence of solutions to gravity water waves in 3D.

In the case of $1$ dimensional surfaces, it is not known whether global solutions exist.
The only work investigating the long time behavior of small solutions is the paper of Wu \cite{WuAG}, who obtained almost global existence.
In \cite{WuAG}, as well as in \cite{Wu3DWW} and \cite{WuNLS}, 
a nonlinear version of a normal form transformation is used in order to recast the quadratic equations into cubic ones.
Remarkably, the cubic equations obtained by Wu in \cite{WuAG} admit energy (resp. weighted energy) estimates in Sobolev (resp. weighted Sobolev) spaces, 
unlike the cubic equations \eqref{WWcubicmodel} obtained in \cite{GMS2}.
However, the energy estimates in \cite{WuAG} are not optimal, 
and can be used only to obtain decay estimates on time scales of the order $e^{c/\e_0}$, where $\e_0$ is the size of the initial data.
Also, the formulation in \cite{WuAG} does not seem to be well-suited for the type of Fourier analysis performed in this paper.

We propose here to analyze \eqref{model} as a simplified model for the leading order cubic dynamics in the $2$D water waves equations,
as given by \eqref{WWcubicmodel}.
Theorem \ref{maintheo} shows that \eqref{model} admits global solutions whose long time behavior is not linear.
In particular, a correction of logarythmic type, see \eqref{mainconcl2}, is needed in order to obtain the $t^{-1/2}$ decay 
and the scattering of solutions. We emphasize that having a precise understanding of this correction is a key component of the 
global-in-time analysis.

As already pointed out, the advantage in the analysis of \eqref{model} 
lies in the fact that the symbol of the nonlinear interaction is just taken to be $1$,
so that the difficulty concerning the energy and $L^2$-based estimates does not enter the problem\footnote{
We note however that some of the structure in the nonlinearity of the water wave equation that could be of help is disregarded by doing so.}.
Nevertheless, as far as the global-in-time pointwise behavior of solutions to the $2$D gravity water waves is concerned, 
\eqref{model} can be considered an appropriate 
model.


We conclude by mentioning that in the physics literature the fractional \S equation was introduced by Laskin \cite{Laskin2} 
in deriving a fractional version of the classical quantum mechanics.
For the nonlinear cubic gauge invariant equation, with dispersion ${|\partial_x|}^\alpha$ for $1 < \alpha < 2$,
global existence for $L^2$ data was obtained in \cite{GuoHuo}, combining multilinear estimates based on Bourgain spaces with mass conservation.
It would be interesting to see whether our global existence and modified scattering result  
can be generalized to other fractional powers $0 < \a < 2$ with $\a \neq 1$. Of particular appeal would be the case $\a = \frac{3}{2}$, 
given its possible relevance to the $1$ dimensional water waves equations with surface tension (capillary waves).

\vskip10pt
Our paper is organized as follows. In section \ref{maintheo} we prove Theorem \ref{maintheo} as a consequence of a bootstrap 
argument based on the local existence theory (Proposition \ref{locTh}), on a refined linear dispersive estimate (Lemma \ref{dispersive}),
and on a priori estimates in a suitably constructed space (Proposition \ref{bootstrap}).
We then proceed to prove Lemma \ref{dispersive} in section \ref{proof1}.
Proposition \ref{bootstrap} follows as a consequence of Propositions \ref{bigbound1} and \ref{bigbound2}.
The proof of this latter constitutes the most technical part of the paper and is performed in section \ref{proof2.2}.

\section{Proof of Theorem \ref{maintheo}}

We define the normed spaces
\begin{equation}\label{def}
\begin{split}
&W:=\{f\in H^4(\mathbb{R}):\|f\|_{W}:=\|f\|_{L^2}+\|x\cdot\partial f\|_{L^2}<\infty\},\\
&Z:=\{f\in H^4(\mathbb{R}):\|f\|_{Z}:=\|\,(1+|\xi|)^{10}\widehat{f}(\xi)\|_{L^\infty_\xi}<\infty\}.
\end{split}
\end{equation}

We start with the local theory:

\begin{pro}\label{locTh}

(i) Given $u_0\in H^4(\mathbb{R})$ there is $T_0=T_0(\|u_0\|_{H^4})>0$ and a unique solution $u\in C([-T_0,T_0]:H^4)$ of the initial-value problem
\begin{equation}\label{IVP}
i \partial_t u -\Lambda u = c_0{|u|}^2 u + c_1 u^3 + c_2 u \bar{u}^2  + c_3 \bar{u}^3, \qquad u(0)=u_0.
\end{equation}

(ii) Assume $N\geq 4$ and $u_0\in H^N(\mathbb{R})$, and let $u\in C([-T_0,T_0]:H^4)$, $T_0=T_0(\|u_0\|_{H^4})$, denote the solution constructed in part (i). 
Then $u\in C([-T_0,T_0]:H^N)$ and
\begin{equation}\label{loc1}
\|u(t_2)\|_{H^N}-\|u(t_1)\|_{H^N}\lesssim_N \int_{t_1}^{t_2}\|u(s)\|_{H^N}\|u(s)\|_{L^\infty}^2\,ds
\end{equation}
for any $t_1\leq t_2\in[-T_0,T_0]$.
\end{pro}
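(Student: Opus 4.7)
The plan is a standard Picard iteration using the unitarity of $e^{-it\Lambda}$ on every $H^s(\R)$, which follows at once from the fact that $\Lambda=|\partial_x|^{1/2}$ is self-adjoint. Writing \eqref{IVP} in Duhamel form
$$u(t) = e^{-it\Lambda}u_0 - i\int_0^t e^{-i(t-s)\Lambda} N(u(s))\,ds, \qquad N(u):=c_0|u|^2u+c_1u^3+c_2u\bar u^2+c_3\bar u^3,$$
defines a map $\Phi$. Because $H^4(\R)$ is a Banach algebra (since $4>1/2$), the cubic form $N$ enjoys $\|N(u)\|_{H^4}\lesssim \|u\|_{H^4}^3$ and the corresponding trilinear Lipschitz bound $\|N(u)-N(v)\|_{H^4}\lesssim (\|u\|_{H^4}+\|v\|_{H^4})^2\|u-v\|_{H^4}$. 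On the closed ball of radius $2\|u_0\|_{H^4}$ in $C([-T,T]:H^4)$, unitarity reduces $\|\Phi(u)-\Phi(v)\|_{C([-T,T]:H^4)}$ to a time integral of these cubic bounds, so $\Phi$ is a contraction as soon as $T=T_0(\|u_0\|_{H^4})$ is small enough. Banach's fixed-point theorem yields the unique solution.

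\textbf{Part (ii).} To upgrade the solution to $H^N$ and to obtain \eqref{loc1}, I would argue by approximation: mollify $u_0$ to a sequence $u_0^\e\in H^\infty$ with $\|u_0^\e\|_{H^4}$ uniformly bounded, apply part (i) on the common interval $[-T_0,T_0]$ (whose length depends only on the $H^4$-norm) to produce smooth solutions $u^\e$, carry out the energy estimate at the smooth level, and finally pass to the limit. At the smooth level, set $f:=\partial_x^N u^\e$; since $\Lambda$ commutes with $\partial_x^N$ and is self-adjoint, the linear contribution to $\tfrac{d}{dt}\|f\|_{L^2}^2$ vanishes identically, so
$$\tfrac{d}{dt}\|\partial_x^N u^\e\|_{L^2} \le \|\partial_x^N N(u^\e)\|_{L^2}.$$
Combined with the analogous trivial inequality for $\|u^\e\|_{L^2}$, this yields $\tfrac{d}{dt}\|u^\e\|_{H^N}\lesssim \|N(u^\e)\|_{H^N}$.

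The only nonlinear ingredient is the Moser tame-product estimate
$$\|fgh\|_{H^N}\lesssim \|f\|_{L^\infty}\|g\|_{L^\infty}\|h\|_{H^N} + \text{cyclic permutations},$$
obtained by expanding $\partial_x^N(fgh)$ with Leibniz and interpolating intermediate derivatives via Gagliardo--Nirenberg. Applied to each of the four monomials of $N(u)$ this gives $\|N(u^\e)\|_{H^N}\lesssim \|u^\e\|_{L^\infty}^2 \|u^\e\|_{H^N}$, and integrating over $[t_1,t_2]$ reproduces \eqref{loc1} for $u^\e$. The uniform bounds produced this way are enough to pass to the limit $\e\to 0$ (strong $H^4$-convergence identifies the limit with $u$, weak-$*$ compactness in $H^N$ plus lower semicontinuity propagates the estimate).

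The whole argument is essentially routine for a cubic semilinear equation with skew-adjoint linear part; there is no serious obstacle. The only technical point worth flagging is the tame character of the Moser estimate: the cruder bound $\|N(u)\|_{H^N}\lesssim \|u\|_{H^N}^3$ would yield persistence of regularity but not the sharp form \eqref{loc1}, which is needed later to combine with dispersive $L^\infty$ control in the bootstrap of Proposition \ref{bootstrap}.
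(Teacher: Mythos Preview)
Your proposal is correct and, for part (i), essentially identical to the paper's argument: Duhamel formula, unitarity of $e^{-it\Lambda}$, and the Banach algebra property of $H^4$ to close a contraction on the ball of radius $2\|u_0\|_{H^4}$.

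For part (ii) there is a mild difference of route. The paper simply reads \eqref{loc1} off the Duhamel formula: writing $u(t_2)=e^{-i(t_2-t_1)\Lambda}u(t_1)-i\int_{t_1}^{t_2}e^{-i(t_2-s)\Lambda}N(u(s))\,ds$, unitarity and the tame product estimate $\|N(u)\|_{H^N}\lesssim_N\|u\|_{L^\infty}^2\|u\|_{H^N}$ give \eqref{loc1} directly, with no need to differentiate the $H^N$ norm in time or to mollify. You instead run an energy argument on $\partial_x^N u$, which forces you through the approximation-by-smooth-data step to justify the time derivative and the limit. Both arguments rest on the same key nonlinear ingredient (the tame Moser estimate you correctly flag), and both are valid; the Duhamel route is just shorter and sidesteps the regularization entirely. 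Your remark that the crude algebra bound $\|N(u)\|_{H^N}\lesssim\|u\|_{H^N}^3$ would not suffice for the later bootstrap is exactly the right observation.
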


\begin{proof}[Proof of Proposition \ref{locTh}] The proposition follows from a standard fixed-point argument: the solution $u$ is constructed as the unique
solution in the complete metric space $X:=\{v\in C([-T_0,T_0]:H^4):\sup_{t\in[-T_0,T_0]}\|v(t)\|_{H^4}\leq 2\|u_0\|_{H^4}\}$ of the equation
\begin{equation*}
 u(t)=e^{-it\Lambda}u_0-i\int_0^te^{-i(t-s)\Lambda}[c_0\bar{u}(s) u(s)^2 + c_1 u(s)^3 + c_2 u(s) \bar{u}(s)^2  + c_3 \bar{u}(s)^3]\,ds.
\end{equation*}
The inequality \eqref{loc1} follows from this definition as well. See also the proof of Lemma \ref{bigbound1} below for the complete details in a more complicated situation.
\end{proof}

Our main ingredient is the following bootstrap estimate.

\begin{pro}\label{bootstrap}
Assume that $N_0=100$, $T>0$ and assume that $u\in C([0,T]:H^{N_0})$ is a solution of the initial-value problem
\begin{equation}\label{IVP2}
i \partial_t u -\Lambda u = c_0{|u|}^2 u + c_1 u^3 + c_2 u \bar{u}^2  + c_3 \bar{u}^3, \qquad u(0)=u_0,
\end{equation}
with the property that
\begin{equation}\label{loc3}
\|u_0\|_{H^{N_0}}+\|u_0\|_{W}+\|u_0\|_{Z}=\varepsilon_0\leq\overline{\varepsilon}.
\end{equation}
Let $f(t):=e^{it\Lambda}u(t)$, $t\in[0,T]$. 

(i) The mapping $t\to f(t)$ is a continuous mapping from $[0,T]$ to $Z\cap W$.

(ii) Assume, in addition, that $p_0\in(0,1/1000]$ and
\begin{equation}\label{loc4}
\sup_{t\in[0,T]}\big[(1+t)^{-p_0}\|f(t)\|_{H^{N_0}}+(1+t)^{-p_0}\|f(t)\|_{W}+\|f(t)\|_{Z}\big]\leq\varepsilon_1,
\end{equation}
for some $\varepsilon_1\in[\varepsilon_0,1]$. Then
\begin{equation}\label{loc5}
 \sup_{t\in[0,T]}\big[(1+t)^{-p_0}\|f(t)\|_{W}+\|f(t)\|_{Z}\big]\leq 2\varepsilon_0+C_{p_0}\varepsilon_1^2,
\end{equation}
for some constant $C_{p_0}$ that may depend only on the exponent $p_0$.

(iii) Assume that \eqref{loc4} holds and let
\begin{equation*}
H(\xi,t):=\frac{2c_0}{\pi}|\xi|^{3/2}\int_0^t|\widehat{f}(\xi,s)|^2 \frac{ds}{s+1},\qquad t\in[0,T].
\end{equation*}
Then there is $p_1>0$ such that
\begin{equation}\label{loc5.5}
(1+t_1)^{p_1}\Big\|(1+|\xi|)^{10}\big[e^{iH(\xi,t_2)}\widehat{f}(\xi,t_2)-e^{iH(\xi,t_1)}\widehat{f}(\xi,t_1)\big]\Big\|_{L^\infty_\xi}\lesssim \varepsilon_1^2.
\end{equation}
 for any $t_1\leq t_2\in [0,T]$.
\end{pro}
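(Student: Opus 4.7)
The plan is to work throughout with the profile $f(t) = e^{it\Lambda}u(t)$, which removes the linear flow and makes visible the oscillatory structure of the nonlinearity. Taking the Fourier transform gives
\begin{equation*}
\partial_t \widehat{f}(\xi,t) \;=\; -i\, c_0 I_0(\xi,t) \;-\; i\sum_{j=1}^3 c_j\, I_j(\xi,t),
\end{equation*}
where each $I_j$ is a trilinear integral in $\widehat{f}, \overline{\widehat{f}}$ with a cubic phase
\begin{equation*}
\Phi_j(\xi,\eta,\sigma) \;=\; |\xi|^{1/2}\pm|\xi-\eta|^{1/2}\pm|\sigma-\eta|^{1/2}\pm|\sigma|^{1/2},
\end{equation*}
the signs depending on $j$. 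For the gauge-invariant nonlinearity ($j=0$, i.e.\ $|u|^2u$) the phase $\Phi_0$ vanishes together with $\nabla_{\eta,\sigma}\Phi_0$ at the diagonal $(\eta,\sigma)=(0,\xi)$, so this piece supports a space-time resonance; for $j=1,2,3$ the phase $\Phi_j$ is bounded away from $0$ on the stationary set, so these pieces are non-resonant and amenable to a normal form.

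For part (i), since $u\in C([0,T]:H^{N_0})$ by the local theory, $f\in C([0,T]:H^{N_0})$ automatically. Continuity into $Z$ follows from $f(t_2)-f(t_1)=\int_{t_1}^{t_2}\partial_s f\,ds$ together with the elementary pointwise bound $(1+|\xi|)^{10}|\partial_s\widehat{f}(\xi,s)|\lesssim \|u(s)\|_{H^{N_0}}^3$, which holds because the trilinear symbols are of product form (no derivative loss) and $N_0=100$ gives plenty of room to absorb the $(1+|\xi|)^{10}$ weight. Continuity into $W$ reduces, via $\widehat{x\partial f}=-\widehat{f}-\xi\partial_\xi\widehat{f}$ and $\partial_\xi\widehat{f}=i(t/2)\,\mathrm{sign}(\xi)|\xi|^{-1/2}\widehat{f}+e^{it|\xi|^{1/2}}\partial_\xi\widehat{u}$, to continuity in $L^2$ of $\Lambda f$ and of $x\partial u$. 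The latter satisfies $(\partial_t+i\Lambda)(x\partial u)=x\partial \mathcal{N}(u,\bar u)$, which is bounded in $L^2$ by a power of $\|u\|_{H^4}\cdot(\|u\|_{H^4}+\|x\partial u\|_{L^2})$, and the fixed-point argument of Proposition~\ref{locTh} applies.

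Part (ii) is the heart. The $W$-bound is obtained by applying $\xi\partial_\xi$ to the Duhamel equation for $\widehat{f}$, using the dispersive decay of Lemma~\ref{dispersive} ($\|u(s)\|_{L^\infty}\lesssim (1+s)^{-1/2}$ up to small losses) to reduce the trilinear $L^2$ estimate to $\int_0^t(1+s)^{-1+p_0}\,ds\lesssim (1+t)^{p_0}\varepsilon_1^3$, which absorbs the normalization factor $(1+t)^{-p_0}$ with room to spare. The $Z$-bound needs more care. For the non-resonant pieces $I_1,I_2,I_3$ I integrate by parts in $s$ using $e^{is\Phi_j}=(i\Phi_j)^{-1}\partial_s e^{is\Phi_j}$; the boundary terms are $O(\varepsilon_1^3)$ and the bulk is $O(\int (1+s)^{-1-\delta}\varepsilon_1^3 ds)$ after applying the dispersive decay to the transferred derivatives. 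For $I_0$, stationary phase at $(0,\xi)$ with Hessian of determinant $-\tfrac{1}{16}|\xi|^{-3}$ produces the leading contribution
\begin{equation*}
-i c_0 I_0(\xi,s) \;=\; -i\,\tfrac{2c_0}{\pi}\,\tfrac{|\xi|^{3/2}}{s}\,|\widehat{f}(\xi,s)|^2\widehat{f}(\xi,s) \;+\; O\!\bigl(s^{-1-\delta}\varepsilon_1^3(1+|\xi|)^{-10}\bigr).
\end{equation*}
Crucially, since $c_0\in\R$, the leading term is purely imaginary when paired with $\overline{\widehat{f}}$, so $\partial_s|\widehat{f}(\xi,s)|^2$ picks up no contribution from the resonant piece and decays integrably in $s$. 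Integrating gives the claimed bound on $\|f(t)\|_Z$.

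Part (iii) is then a corollary. Since $\partial_t H(\xi,t)=\tfrac{2c_0}{\pi}|\xi|^{3/2}(t+1)^{-1}|\widehat{f}(\xi,t)|^2$, the same stationary-phase computation yields
\begin{equation*}
\partial_t\bigl(e^{iH}\widehat{f}\bigr) \;=\; e^{iH}\!\left[\partial_t\widehat{f}+i(\partial_t H)\widehat{f}\right] \;=\; e^{iH}\cdot O\!\bigl((1+t)^{-1-p_1}\varepsilon_1^3(1+|\xi|)^{-10}\bigr),
\end{equation*}
where the $(t+1)^{-1}$ versus $t^{-1}$ discrepancy between $\partial_t H$ and the leading piece of $\partial_t\widehat{f}$ contributes only an $O(t^{-2})$ error. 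Integrating from $t_1$ to $t_2$ and multiplying by $(1+t_1)^{p_1}$ gives \eqref{loc5.5} with any $0<p_1<\delta$. The main obstacle throughout is the stationary-phase analysis of $I_0$ in a weighted $L^\infty_\xi$ norm: one must carry out the expansion uniformly in $\xi$, including low and high frequencies and the geometry near $\xi=0$, while retaining the cubic smallness. This is the step where the technical machinery of Proposition~\ref{bigbound2} is needed.
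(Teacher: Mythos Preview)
Your sketch captures the broad strategy but has a genuine gap in the $W$-bound of part (ii). When you apply $\xi\partial_\xi$ to the Duhamel formula for $\widehat f$, the derivative hits the oscillatory factor $e^{is\psi}$ and produces a term of the form
\[
is\cdot(\xi\partial_\xi\psi)(\xi,\eta,\sigma)\,e^{is\psi}\,\widehat{f^{\iota_1}}(\xi-\eta)\widehat{f^{\iota_2}}(\eta-\sigma)\widehat{f^{\iota_3}}(\sigma).
\]
The factor $s$ here cannot be absorbed by the dispersive decay $\|u(s)\|_{L^\infty}\lesssim(1+s)^{-1/2}$ alone: naively estimating gives $\int_0^t s\cdot(1+s)^{-1+p_0}\,ds$, which grows like $t$. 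Your one-line reduction to $\int_0^t(1+s)^{-1+p_0}\,ds$ simply omits this term. (The alternative route you suggest in part (i), working with $x\partial u$ instead, runs into the same problem: the commutator $[x\partial,\Lambda]=\tfrac12\Lambda$ generates a source $\tfrac12\Lambda u$ whose time integral grows like $t$, and the passage from $\|x\partial u\|_{L^2}$ back to $\|x\partial f\|_{L^2}$ introduces another factor of $t$.) The paper's fix exploits the homogeneity of $\Lambda$: since $\xi\Lambda'(\xi)=\tfrac12\Lambda(\xi)$ one has the scaling identity
\[
\xi\partial_\xi\psi \;=\; -\eta\partial_\eta\psi - \sigma\partial_\sigma\psi + \tfrac12\psi.
\]
The first two pieces are removed by integrating by parts in $\eta,\sigma$, which transfers the weight onto the profiles and reproduces the $W$-norm on the inputs (these are the operators $L_{g,t,1}^{\iota_1,\iota_2,\iota_3},\dots,L_{g,t,4}^{\iota_1,\iota_2,\iota_3}$ in \eqref{bn100}). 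The residual piece $\tfrac{is}{2}\psi\,e^{is\psi}=\tfrac{s}{2}\partial_s e^{is\psi}$ is then handled by integration by parts in $s$, trading the factor $s$ for boundary terms and an $s\,\partial_s\widehat f$ term that is quintic and has the required decay; see \eqref{bn110}. Neither the scaling identity nor this $s$-integration by parts appears in your argument, and without them the $W$-estimate does not close.

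Your outline for the $Z$-bound and for part (iii) is along the correct lines and matches the paper's approach: normal form (integration by parts in $s$) on the elliptic phases $\Phi_1,\Phi_2,\Phi_3$, and stationary-phase extraction of the resonant contribution from $\Phi_0$ at the diagonal. You correctly flag that making the stationary-phase expansion uniform in $\xi$ (low and high frequencies, and the various relative sizes of $k_1,k_2,k_3$) is the technical core, carried out in Section~\ref{proof2.2}. One minor remark: rather than controlling $\partial_s|\widehat f|^2$ separately, the paper works directly with $g=e^{iH}\widehat f$, which yields the $Z$-bound of part (ii) and the modified-scattering estimate of part (iii) in a single stroke.
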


The last ingredient in the proof of Theorem \ref{maintheo} is the following dispersive linear estimate:

\begin{lem}\label{dispersive}
 For any $t\in\mathbb{R}$ we have
\begin{equation}\label{disperse}
 \|e^{i t\Lambda}f\|_{L^\infty}\lesssim (1+|t|)^{-1/2}\|\,|\xi|^{3/4}\widehat{f}(\xi)\|_{L^\infty_\xi}+(1+|t|)^{-5/8}\big[\|x\cdot\partial f\|_{L^2}+\|f\|_{H^2}\big].
\end{equation}
\end{lem}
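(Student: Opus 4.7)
My approach to Lemma \ref{dispersive} is based on a stationary phase analysis of
\[
e^{it\Lambda}f(x) = \frac{1}{2\pi}\int_\R e^{i\phi(\xi;x,t)}\hat{f}(\xi)\,d\xi, \qquad \phi(\xi;x,t) = x\xi + t|\xi|^{1/2}.
\]
For $|t|\le 1$ the desired estimate is immediate from the one-dimensional Sobolev embedding together with the unitarity of $e^{it\Lambda}$ on $H^1$: $\|e^{it\Lambda}f\|_{L^\infty}\lesssim\|f\|_{H^1}\le\|f\|_{H^2}$, absorbed into the second term of \eqref{disperse}.

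For $|t|\ge 1$ I take $t\ge 1$ and, by symmetry, reduce to the part of $\hat{f}$ supported in $\xi>0$. The derivatives $\phi'(\xi)=x+t/(2\sqrt{\xi})$ and $\phi''(\xi)=-t/(4\xi^{3/2})$ show that a stationary point exists iff $x<0$, located at $\xi_0 = t^2/(4x^2)$, with $|\phi''(\xi_0)| = 2|x|^3/t^2$. Formal stationary phase then yields the leading contribution $\sim|\phi''(\xi_0)|^{-1/2}\hat{f}(\xi_0) = \sqrt{2}\,t^{-1/2}|\xi_0|^{3/4}\hat{f}(\xi_0)$, already bounded by $t^{-1/2}\|\,|\xi|^{3/4}\hat{f}(\xi)\|_{L^\infty_\xi}$, which is exactly the first term of \eqref{disperse}.

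The plan is then to make this rigorous via a Littlewood--Paley decomposition $\hat{f}=\sum_k\varphi_k\hat{f}$ with $\varphi_k$ supported in $|\xi|\sim 2^k$, together with a high-frequency cutoff at $2^{k_\ast}\sim t^{5/12}$ (so that the Bernstein gain $2^{-3k_\ast/2}$ matches $t^{-5/8}$). For $2^k\ge 2^{k_\ast}$, Bernstein and the $H^2$ bound give $\|(\varphi_k\hat{f})^\vee\|_{L^\infty}\lesssim 2^{k/2}\|\varphi_k\hat{f}\|_{L^2}\lesssim 2^{-3k/2}\|f\|_{H^2}$, summing to $\lesssim t^{-5/8}\|f\|_{H^2}$. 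For $2^k<2^{k_\ast}$ on blocks where $\xi_0\not\sim 2^k$ (equivalently $|x|$ not comparable to $t\cdot 2^{-k/2}$), the lower bound $|\phi'|\gtrsim\max(|x|,t\cdot 2^{-k/2})$ justifies two integrations by parts; the resulting boundary terms are controlled by $\|\xi\hat{f}'\|_{L^2}$, which via the Plancherel identity $\widehat{x\partial f}(\xi) = -\hat{f}(\xi)-\xi\hat{f}'(\xi)$ translates into $\|x\partial f\|_{L^2}+\|f\|_{L^2}$. On the unique critical block $2^k\sim\xi_0$, I would apply van der Corput's lemma with $|\phi''|\sim t\cdot 2^{-3k/2}$: the $L^\infty$ amplitude term produces exactly the main $t^{-1/2}\|\,|\xi|^{3/4}\hat{f}\|_{L^\infty}$ contribution, while the derivative part feeds into the weighted $L^2$ piece.

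The main obstacle is the critical-block analysis: one must simultaneously extract the sharp $t^{-1/2}$ main term from only an $L^\infty$ control on $|\xi|^{3/4}\hat{f}(\xi)$, and close the error at the intermediate rate $t^{-5/8}$ using only the $L^2$-weighted norm $\|x\partial f\|_{L^2}$. The naive van der Corput bound on this single block is not sharp enough, so a finer sub-decomposition around $\xi_0$ at the natural Airy-type scale $t/|x|^{3/2}$ (at which the cubic Taylor remainder of $\phi$ becomes $O(1)$) is required, along with a careful balancing between this sub-scale and the Littlewood--Paley cutoff $k_\ast$, in order to obtain both exponents in \eqref{disperse} simultaneously.
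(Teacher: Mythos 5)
Your plan is correct and is essentially the proof in the paper: Littlewood--Paley decomposition, crude $L^2$ bounds for the very small and very large frequency blocks, integration by parts where $|x/t|$ is not comparable to $2^{-k/2}$, and a further dyadic sub-decomposition around the stationary point $\xi_0=t^2/(4x^2)$ at the critical scale $2^{l_0}\approx 2^{3k/4}|t|^{-1/2}$ (which is the $t/|x|^{3/2}$ scale you identify). The one step you leave open --- closing the critical block --- is in fact quite soft in the paper: no van der Corput oscillation is used on the innermost sub-block, which is bounded simply by its length $2^{l_0}$ times $\|\widehat{P_kf}\|_{L^\infty}\lesssim|t|^{1/2}2^{-3k/4}$, giving the $O(1)$ main term; the outer annuli $|\xi-\xi_0|\sim 2^l$, $l>l_0$, are handled by one integration by parts using $|\Psi'|\gtrsim|t|2^{-3k/2}2^l$, with the resulting $L^1$ norms split as $2^l\|\widehat{P_kf}\|_{L^\infty}$ and $2^{l/2}\|\partial\widehat{P_kf}\|_{L^2}$, and the geometric sums over $l\ge l_0$ close precisely because $2^k\lesssim|t|$. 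Your high-frequency cutoff $2^{k_\ast}\sim t^{5/12}$ is more conservative than the paper's $2^{k_\ast}\sim t$ and would in fact make the balancing in the critical block easier (the borderline term $|t|^{-1/8}2^{k/8}$ becomes strictly decaying), so this is a legitimate variant; otherwise the two arguments are the same.
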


We prove Proposition \ref{bootstrap} in section \ref{proof2} and we prove Lemma \ref{dispersive} in section \ref{proof1}. 
In the rest of this section we show how to combine these ingredients to complete the proof of the main theorem.

\begin{proof}[Proof of Theorem \ref{maintheo}] Assume we are given data $u_0$ satisfying \eqref{mainhyp}, i.e.
\begin{equation*}
 \|u_0\|_{H^{N_0}}+\|u_0\|_{W}+\|u_0\|_{Z}=\varepsilon_0\leq\overline{\varepsilon},
\end{equation*}
for some $\overline{\varepsilon}$ sufficiently small. In view of Proposition \ref{locTh} there is $T>0$ and a solution $u\in C([0,T]:H^{N_0})$ of the initial-value 
problem \eqref{IVP0} with the property that if $f(t)=e^{it\Lambda}u(t)$ then
\begin{equation}\label{loc6}
\sup_{t\in[0,T]}(1+t)^{-p_0}\|f(t)\|_{H^{N_0}}\leq\varepsilon_0^{3/4}. 
\end{equation}
In view of Proposition \ref{bootstrap} (i), the mapping $t\to f(t)$ is a continuous mapping from $[0,T]$ to $Z\cap W$. Let $T'$ denote the largest number in $[0,T]$ 
with the property that
\begin{equation*}
 \sup_{t\in[0,T']}\big[(1+t)^{-p_0}\|f(t)\|_{W}+\|f(t)\|_{Z}\big]\leq\varepsilon_0^{3/4}. 
\end{equation*}
Using now Proposition \ref{bootstrap} (ii), it follows that $\sup_{t\in[0,T']}\big[(1+t)^{-p_0}\|f(t)\|_{W}+\|f(t)\|_{Z}\big]\leq 3\varepsilon_0$. 
Therefore $T'=T$ and
\begin{equation}\label{loc7}
 \sup_{t\in[0,T]}\big[(1+t)^{-p_0}\|f(t)\|_{W}+\|f(t)\|_{Z}\big]\leq 3\varepsilon_0.  
\end{equation}

We observe now that $u(t)=e^{-it\Lambda}f(t)$. Using Lemma \ref{dispersive} and \eqref{loc6}--\eqref{loc7}, it follows that
\begin{equation*}
\|u(t)\|_{L^\infty}\lesssim \varepsilon_0^{3/4}(1+t)^{-1/2},\qquad\text{ for any }t\in[0,T].
\end{equation*}
Letting $P(t):=\|f(t)\|_{H^{N_0}}$, it follows from \eqref{loc1} that
\begin{equation*}
P(t)-P(0)\lesssim \varepsilon_0^{3/4}\int_{0}^tP(s)(1+s)^{-1}\,ds
\end{equation*}
for any $t\in[0,T]$. Therefore $P(t)\lesssim P(0)(1+t)^{p_0}$ for any $t\in[0,T]$, i.e.
\begin{equation}\label{loc10}
\sup_{t\in[0,T]}(1+t)^{-p_0}\|f(t)\|_{H^{N_0}}\lesssim \varepsilon_0.
\end{equation}
As a consequence, if $u\in C([0,T]:H^{N_0})$ is a solution that satisfies the weaker bound \eqref{loc6}, then $u$ has to satisfy the stronger bound \eqref{loc10}.
Therefore, the solution $u$ can be extended to the full interval $[0,\infty)$, and the desired bound \eqref{mainconcl1} follows from \eqref{loc7} and \eqref{loc10}. 

The modified scattering behaviour \eqref{mainconcl2} is a consequence of Proposition \ref{bootstrap} (iii). This completes the proof of the theorem.
\end{proof}

\section{Proof of Lemma \ref{dispersive}}\label{proof1}

In this section we prove Lemma \ref{dispersive}. We fix $\varphi:\mathbb{R}\to[0,1]$ an even smooth function supported in $[-8/5,8/5]$ and 
equal to $1$ in $[-5/4,5/4]$. Let
\begin{equation*}
\varphi_k(x):=\varphi(x/2^k)-\varphi(x/2^{k-1}),\qquad k\in\mathbb{Z},\,x\in\mathbb{R}.
\end{equation*}
More generally, for any $m,k\in\mathbb{Z}$, $m\leq k$, we define
\begin{equation}\label{disp0}
 \varphi^{(m)}_k(x):=
\begin{cases}
\varphi(x/2^k)-\varphi(x/2^{k-1}),\qquad &\text{ if }k\geq m+1,\\
\varphi(x/2^k),\qquad &\text{ if }k=m.
\end{cases}
\end{equation}
For any interval $I\subseteq\mathbb{R}$ we define
\begin{equation}\label{disp0.5}
\varphi_I:=\sum_{k\in I\cap\mathbb{Z}}\varphi_k,\qquad \varphi^{(m)}_I:=\sum_{k\in I\cap\mathbb{Z}\cap[m,\infty)}\varphi^{(m)}_k.
\end{equation}

Let $P_k$, $k\in\mathbb{Z}$, denote the operator on $\mathbb{R}$ defined by the Fourier multiplier $\xi\to \varphi_k(\xi)$. For \eqref{disperse} it suffices to prove
that
\begin{equation}\label{disp1}
 \sum_{k\in\mathbb{Z}}\Big|\int_{\mathbb{R}}e^{it\Lambda(\xi)}e^{ix\xi}\widehat{f}(\xi)\varphi_k(\xi)\,d\xi\Big|\lesssim 1,
\end{equation}
for any $t,x\in\mathbb{R}$ and any function $f$ satisfying
 \begin{equation}\label{disp2}
  (1+|t|)^{-1/2}\|\,|\xi|^{3/4}\widehat{f}(\xi)\|_{L^\infty_\xi}+(1+|t|)^{-5/8}\big[\|x\cdot\partial f\|_{L^2}+\|f\|_{H^2}\big]\leq 1.
 \end{equation}

Using only the bound $\|f\|_{H^2}\lesssim (1+|t|)^{5/8}$, we estimate first the contribution of small frequencies, 
\begin{equation*}
 \sum_{2^k\leq 2^{10}(1+|t|)^{-5/4}}\Big|\int_{\mathbb{R}}e^{it\Lambda(\xi)}e^{ix\xi}\widehat{f}(\xi)\varphi_k(\xi)\,d\xi\Big|
\lesssim \sum_{2^k\leq 2^{10}(1+|t|)^{-5/4}}2^{k/2}\|\widehat{P_kf}\|_{L^2}\lesssim 1,
\end{equation*}
and the contribution of large frequencies,
\begin{equation*}
 \sum_{2^k\geq 2^{-10}(1+|t|)}\Big|\int_{\mathbb{R}}e^{it\Lambda(\xi)}e^{ix\xi}\widehat{f}(\xi)\varphi_k(\xi)\,d\xi\Big|
\lesssim \sum_{2^k\geq 2^{-10}(1+|t|)}2^{k/2}\|\widehat{P_kf}\|_{L^2}\lesssim 1.
\end{equation*}
Therefore, for \eqref{disp1} it suffices to prove that
\begin{equation}\label{disp4}
 \sum_{2^{10}(1+|t|)^{-5/4}\leq 2^k\leq 2^{-10}(1+|t|)}\Big|\int_{\mathbb{R}}e^{it\Lambda(\xi)}e^{ix\xi}\widehat{f}(\xi)\varphi_k(\xi)\,d\xi\Big|\lesssim 1.
\end{equation}

In proving \eqref{disp4} we may assume that $|t|\geq 1$. We estimate first the nonstationary contributions. Using \eqref{disp2} we see that 
$\|\widehat{P_kf}\|_{L^2}+2^k\|\partial(\widehat{P_kf})\|_{L^2}\lesssim |t|^{5/8}$. Therefore, if $2^{-k/2+4}\leq |x/t|$ or $|x/t|\leq 2^{-k/2-4}$ 
then we integrate by parts to estimate
\begin{equation*}
 \Big|\int_{\mathbb{R}}e^{it\Lambda(\xi)}e^{ix\xi}\widehat{P_kf}(\xi)\,d\xi\Big|
\lesssim |t|^{-1}2^{k/2}\cdot \|\partial(\widehat{P_kf})\|_{L^1}+|t|^{-1}2^{-k/2}\cdot \|\widehat{P_kf}\|_{L^1}\lesssim |t|^{-3/8}.
\end{equation*}
Therefore, for \eqref{disp4} it suffices to prove that
\begin{equation}\label{disp5}
\Big|\int_{\mathbb{R}}e^{it\Lambda(\xi)}e^{ix\xi}\widehat{f}(\xi)\varphi_k(\xi)\,d\xi\Big|\lesssim 1,
\end{equation}
provided that $|t|\geq 1$ and $2^k\in[2^{10}(1+|t|)^{-5/4},2^{-10}(1+|t|)]\cap [2^{-8}t^2/x^2,2^8t^2/x^2]$.

Let $\Psi(\xi):=t\Lambda(\xi)+x\xi$ and notice that $|\Psi''(\xi)|\approx |t||\xi|^{-3/2}$. Let $\xi_0\in\mathbb{R}$ denote the unique solution of the equation 
$\Psi'(\xi)=0$, i.e.
\begin{equation*}
\xi_0:=\mathrm{sign}(t/2x)\frac{t^2}{4x^2}.
\end{equation*}
Clearly, $|\xi_0|\approx 2^k$. Let $l_0$ denote the smallest integer with the property that $2^{2l_0}\geq 2^{3k/2}|t|^{-1}$ and estimate the 
left-hand side of \eqref{disp5} by
\begin{equation}\label{disp6}
\Big|\int_{\mathbb{R}}e^{it\Lambda(\xi)}e^{ix\xi}\widehat{f}(\xi)\varphi_k(\xi)\,d\xi\Big|\leq\sum_{l=l_0}^{k+100}|J_l|,
\end{equation}
where, with the notation in \eqref{disp0}, for any $l\geq l_0$, 
\begin{equation*}
J_{l}:=\int_{\mathbb{R}}e^{i\Psi(\xi)}\cdot \widehat{P_kf}(\xi)\varphi_l^{(l_0)}(\xi-\xi_0)\,d\xi.
\end{equation*}
It follows from \eqref{disp2} that
\begin{equation*}
\|\widehat{P_kf}\|_{L^\infty}\lesssim |t|^{1/2}2^{-3k/4},\qquad \|\widehat{P_kf}\|_{L^2}+2^k\|\partial(\widehat{P_kf})\|_{L^2}\lesssim |t|^{5/8}.
\end{equation*}
Therefore
\begin{equation*}
 |J_{l_0}|\lesssim 2^{l_0}\|\widehat{P_kf}\|_{L^\infty}\lesssim 2^{3k/4}|t|^{-1/2}\cdot |t|^{1/2}2^{-3k/4}\lesssim 1.
\end{equation*}
Moreover, since $|\Psi'(\xi)|\gtrsim |t|2^{-3k/2}2^l$ whenever $|\xi|\approx 2^k$ and $|\xi-\xi_0|\approx 2^l$, we can integrate by parts to estimate
\begin{equation*}
\begin{split}
|J_l|&\lesssim \frac{1}{|t|2^{-3k/2}2^l}\big[2^{-l}\|\widehat{P_kf}(\xi)\cdot\mathbf{1}_{[0,2^{l+4}]}(|\xi-\xi_0|)\|_{L^1_\xi}+
\|\partial(\widehat{P_kf})(\xi)\cdot\mathbf{1}_{[0,2^{l+4}]}(|\xi-\xi_0|)\|_{L^1_\xi}\big]\\
&\lesssim |t|^{-1}2^{3k/2}2^{-l}\big[\|\widehat{P_kf}\|_{L^\infty_\xi}+
2^{l/2}\|\partial(\widehat{P_kf})\|_{L^2}\big]\\
&\lesssim |t|^{-1/2}2^{3k/4}2^{-l}+|t|^{-3/8}2^{k/2}2^{-l/2}.
\end{split}
\end{equation*}
The desired bound \eqref{disp5} follows from \eqref{disp6} and the last two estimates. This completes the proof of the lemma.

\section{Proof of Proposition \ref{bootstrap}}\label{proof2}

It follows from the definitions that
\begin{equation}\label{bn1}
\begin{split}
&(\partial_t\widehat{f})(\xi,t)=(-i)(2\pi)^{-2}[c_0I_0(\xi,t)+c_1I_1(\xi,t)+c_2I_2(\xi,t)+c_3I_3(\xi,t)],\\
&I_0(\xi,t):=\int_{\mathbb{R}\times\mathbb{R}}e^{it[\Lambda(\xi)-\Lambda(\xi-\eta)-\Lambda(\eta-\sigma)+\Lambda(\sigma)]}\widehat{f}(\xi-\eta,t)
\widehat{f}(\eta-\sigma,t)\widehat{\overline{f}}(\sigma,t)\,d\eta d\sigma,\\
&I_1(\xi,t):=\int_{\mathbb{R}\times\mathbb{R}}e^{it[\Lambda(\xi)-\Lambda(\xi-\eta)-\Lambda(\eta-\sigma)-\Lambda(\sigma)]}\widehat{f}(\xi-\eta,t)
\widehat{f}(\eta-\sigma,t)\widehat{f}(\sigma,t)\,d\eta d\sigma,\\
&I_2(\xi,t):=\int_{\mathbb{R}\times\mathbb{R}}e^{it[\Lambda(\xi)-\Lambda(\xi-\eta)+\Lambda(\eta-\sigma)+\Lambda(\sigma)]}\widehat{f}(\xi-\eta,t)
\widehat{\overline{f}}(\eta-\sigma,t)\widehat{\overline{f}}(\sigma,t)\,d\eta d\sigma,\\
&I_3(\xi,t):=\int_{\mathbb{R}\times\mathbb{R}}e^{it[\Lambda(\xi)+\Lambda(\xi-\eta)+\Lambda(\eta-\sigma)+\Lambda(\sigma)]}\widehat{\overline{f}}(\xi-\eta,t)
\widehat{\overline{f}}(\eta-\sigma,t)\widehat{\overline{f}}(\sigma,t)\,d\eta d\sigma.
\end{split}
\end{equation}
As in Proposition \ref{bootstrap}, for any $t\in[0,T]$ let
\begin{equation}\label{bn2}
H(\xi,t):=\frac{2c_0}{\pi}|\xi|^{3/2}\int_0^t|\widehat{f}(\xi,s)|^2 \frac{ds}{s+1},\qquad g(\xi,t):=e^{iH(\xi,t)}\widehat{f}(\xi,t).
\end{equation}
It follows from \eqref{bn1} that
\begin{equation}\label{bn3}
\begin{split}
(\partial_tg)(\xi,t)=&-ic_0(2\pi)^{-2}e^{iH(\xi,t)}\Big[I_0(\xi,t)-\widetilde{c}\frac{|\xi|^{3/2}|\widehat{f}(\xi,t)|^2}{t+1}\widehat{f}(\xi,t)\Big]\\
&-ie^{iH(\xi,t)}(2\pi)^{-2}[c_1I_1(\xi,t)+c_2I_2(\xi,t)+c_3I_3(\xi,t)],
\end{split}
\end{equation}
where $\widetilde{c}:=8\pi$.

Proposition \ref{bootstrap} clearly follows from Lemma \ref{bigbound1} and Lemma \ref{bigbound2} below.

\begin{lem}\label{bigbound1}
(i) Assume that $f\in C([0,T]:H^{N_0})$ satisfies the identities \eqref{bn1} and $f(0)\in Z\cap W$. Then the mapping $t\to f(t)$ is a continuous mapping from 
$[0,T]$ to $Z\cap W$.

(ii) With $p_0\in(0,1/1000]$, assume, in addition, that
\begin{equation}\label{bn4}
\begin{split}
&\|f(0)\|_{H^{N_0}}+\|f(0)\|_{W}+\|f(0)\|_{Z}=\varepsilon_0\leq 1,\\
&\sup_{t\in[0,T]}\big[(1+t)^{-p_0}\|f(t)\|_{H^{N_0}}+(1+t)^{-p_0}\|f(t)\|_{W}+\|f(t)\|_{Z}\big]\leq\varepsilon_1,
\end{split}
\end{equation}
for some $\varepsilon_1\in[\varepsilon_0,1]$. Then
\begin{equation}\label{bn5}
\sup_{t\in[0,T]}(1+t)^{-p_0}\|f(t)\|_{W}\leq \varepsilon_0+C_{p_0}\varepsilon_1^2.
\end{equation}
\end{lem}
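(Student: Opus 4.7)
My plan is to handle parts (i) and (ii) separately, with part (ii) being the substantive content.

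For part (i), I start from the Duhamel formula $f(t_2)-f(t_1)=-i\int_{t_1}^{t_2}e^{is\Lambda}N(u,\bar u)(s)\,ds$, where $N(u,\bar u)=c_0|u|^2u+c_1u^3+c_2u\bar u^2+c_3\bar u^3$. Continuity in $Z$ follows from the representation \eqref{bn1}: each trilinear integral $I_j(\xi,s)$ is continuous in $s$ with values in $L^\infty_\xi$ because its integrand is Lipschitz in $s$ on bounded subsets of $H^{N_0}(\mathbb{R})$ and $u\in C([0,T]:H^{N_0})$. Continuity in $W$ uses the commutation identity $xe^{is\Lambda}=e^{is\Lambda}(x-s\Lambda'(D))$ together with $\Lambda'(D)\partial=\frac{i}{2}\Lambda$ to obtain $\|x\partial\partial_s f(s)\|_{L^2}\leq \|x\partial N(s)\|_{L^2}+\tfrac{s}{2}\|\Lambda N(s)\|_{L^2}$; each piece is locally bounded on $[0,T]$ via estimates such as $\|x\partial N\|_{L^2}\lesssim \|u\|_{L^\infty}^2\|x\partial u\|_{L^2}$, with $\|x\partial u\|_{L^2}$ propagating from $\|f(0)\|_W$ by a short continuity-in-time argument.

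For part (ii), Plancherel reduces the problem to controlling $\|u(t)\|_{L^2}+\|\xi\partial_\xi\widehat{f}(t)\|_{L^2}$. The first norm is bounded by $\varepsilon_0$ via the standard energy identity $\frac{d}{dt}\|u\|_{L^2}^2\lesssim \|u\|_{L^\infty}^2\|u\|_{L^2}^2$, since Lemma \ref{dispersive} combined with the bootstrap hypothesis \eqref{bn4} yields $\|u(s)\|_{L^\infty}^2\lesssim \varepsilon_1^2(1+s)^{-1}$, and Gr\"onwall then gives $\|u(t)\|_{L^2}\lesssim \varepsilon_0$. To control the weighted norm, I differentiate \eqref{bn1} in $\xi$, multiply by $\xi$, and integrate in $s$ over $[0,t]$. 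Each piece $\xi\partial_\xi I_j(\xi,s)$ decomposes into a \emph{phase contribution}, where $\partial_\xi$ hits the oscillatory factor $e^{is\Phi_j}$ and produces a dangerous factor of $s$, and a \emph{profile contribution}, where $\partial_\xi$ acts on $\widehat{f}(\xi-\eta)$.

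The crucial idea is to exploit the homogeneity of $\Phi_j$. Since $\Lambda(\xi)=|\xi|^{1/2}$ is homogeneous of degree $1/2$, each $\Phi_j$ is homogeneous of degree $1/2$ in $(\xi,\eta,\sigma)$, and Euler's identity reads
\begin{equation*}
\xi\partial_\xi\Phi_j+\eta\partial_\eta\Phi_j+\sigma\partial_\sigma\Phi_j=\tfrac{1}{2}\Phi_j.
\end{equation*}
Substituting $\xi\partial_\xi\Phi_j$ accordingly converts $is\xi\partial_\xi\Phi_j\cdot e^{is\Phi_j}$ into $\frac{s}{2}\partial_s(e^{is\Phi_j})-\eta\partial_\eta(e^{is\Phi_j})-\sigma\partial_\sigma(e^{is\Phi_j})$. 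The first summand integrates by parts in $s$ to give a boundary term $\tfrac{t}{2}I_j(\xi,t)$, controlled in $L^2_\xi$ by $t\|N(t)\|_{L^2}\lesssim t\cdot\varepsilon_1^2(1+t)^{-1}\cdot\varepsilon_0\lesssim \varepsilon_0\varepsilon_1^2$, plus a correction in which $\partial_s$ hits the profiles and yields a quintic integrand with strictly better decay. The second and third summands are integrated by parts in $\eta$ and $\sigma$ respectively, removing the factor of $s$ and transferring one derivative onto the profiles. After redistributing the weights $\eta,\sigma$ using linear identities such as $\eta=(\xi-\eta)+\eta$ or $\eta=(\eta-\sigma)+\sigma$ so that each newly produced $\widehat{f}'(\alpha)$ is paired with its natural Fourier variable $\alpha$, every resulting trilinear form is bounded in $L^2_\xi$ via Plancherel by $\|u\|_{L^\infty}^2\|x\partial f\|_{L^2}\lesssim \varepsilon_1^3(1+s)^{-1+p_0}$. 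The profile contribution is handled the same way: writing $\xi=(\xi-\eta)+\eta$, the $(\xi-\eta)$ part directly produces $\|u\|_{L^\infty}^2\|x\partial f\|_{L^2}$ via Plancherel, while the $\eta$ part integrates by parts in $\eta$ exactly as above. Time-integrating the resulting integrand $O(\varepsilon_1^3(1+s)^{-1+p_0})$ over $[0,t]$ gives a bound $\lesssim \varepsilon_1^3(1+t)^{p_0}/p_0$, which combined with $\varepsilon_1\leq 1$ absorbs into $C_{p_0}\varepsilon_1^2(1+t)^{p_0}$ as required.

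The main obstacle will be the combinatorial bookkeeping: each of the four nonlinear terms $I_0,\ldots,I_3$ generates several subterms after the Euler substitution, the integrations by parts in $s,\eta,\sigma$, and the weight redistribution, and one must verify term-by-term that every factor of $s$ is absorbed and that every weighted profile derivative can be re-expressed through $\|x\partial f\|_{L^2}$ (the very quantity being bootstrapped). The most delicate point is the integration by parts in $\eta$ when the profile depends on $\xi-\eta$: the derivative $\partial_\eta\widehat{f}(\xi-\eta)=-\widehat{f}'(\xi-\eta)$ is multiplied by a weight $\eta$ that does not match the profile variable $\xi-\eta$, and the decomposition $\eta=\xi-(\xi-\eta)$ then produces a piece with weight $\xi$ that must be pulled outside the integral and treated self-consistently with the weighted bootstrap.
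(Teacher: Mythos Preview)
Your strategy is the same as the paper's: exploit Euler's identity $\xi\partial_\xi\psi=-\eta\partial_\eta\psi-\sigma\partial_\sigma\psi+\tfrac12\psi$ for the degree-$\tfrac12$ homogeneous phase, integrate by parts in $\eta,\sigma$ to remove the factor $s$, and integrate by parts in $s$ on the residual $\tfrac{is}{2}\psi\,e^{is\psi}$ term.

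The one place your description goes astray is the handling of the profile piece $\xi(\partial\widehat{f^{\iota_1}})(\xi-\eta)$. No further integration by parts is needed there, and there is no $\xi$-weighted term to be ``treated self-consistently with the weighted bootstrap.'' Once you integrate by parts in $\eta$ on the $-\eta\partial_\eta(e^{is\psi})$ part of the phase contribution, one of the resulting terms is exactly $-\eta(\partial\widehat{f^{\iota_1}})(\xi-\eta)$; adding this to the profile contribution $\xi(\partial\widehat{f^{\iota_1}})(\xi-\eta)$ gives $(\xi-\eta)(\partial\widehat{f^{\iota_1}})(\xi-\eta)$ on the nose, already paired with its natural variable. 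If instead you try to integrate by parts in $\eta$ again on the leftover $\eta$-part as you describe, the derivative hits $e^{is\psi}$ and reintroduces a factor of $s$, which is exactly what you were trying to avoid. The paper does the combination up front (``gathering terms properly'') and lands on a clean list of five trilinear forms $L_1,\dots,L_5$: the first four are directly bounded by $\|e^{-is\Lambda}f\|_{L^\infty}^2\|f\|_W$, and only $L_5$ (the $\tfrac{is}{2}\psi$ piece) requires the $s$-integration by parts.

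A minor separate point: Gr\"onwall on $\tfrac{d}{dt}\|u\|_{L^2}^2\lesssim\|u\|_{L^\infty}^2\|u\|_{L^2}^2$ with $\|u(s)\|_{L^\infty}^2\lesssim\varepsilon_1^2(1+s)^{-1}$ gives $\|u(t)\|_{L^2}\lesssim\varepsilon_0(1+t)^{C\varepsilon_1^2}$, not $\lesssim\varepsilon_0$; to get a bound of the form $\varepsilon_0+C_{p_0}\varepsilon_1^2$ after dividing by $(1+t)^{p_0}$ you should use the integrated form $\|f(t)\|_{L^2}\le\|f(0)\|_{L^2}+\int_0^t\|N(s)\|_{L^2}\,ds$ together with the bootstrap bound $\|u(s)\|_{L^2}\lesssim\varepsilon_1(1+s)^{p_0}$.
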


\begin{lem}\label{bigbound2}
With the same notation as before, assume that $f\in C([0,T]:H^{N_0})$ satisfies \eqref{bn1}, and 
\begin{equation}\label{bn7}
\sup_{t\in[0,T]}\big[(1+t)^{-p_0}\|f(t)\|_{H^{N_0}}+(1+t)^{-p_0}\|f(t)\|_{W}+\|f(t)\|_{Z}\big]\leq\varepsilon_1\leq 1.
\end{equation}
Then, for some $p_1>0$,
\begin{equation}\label{bn8}
\sup_{t_1\leq t_2\in[0,T]}(1+t_1)^{p_1}\|(1+|\xi|)^{10}(g(\xi,t_2)-g(\xi,t_1))\|_{L^\infty_\xi}\lesssim\varepsilon_1^3.
\end{equation}
\end{lem}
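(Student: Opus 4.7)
Starting from \eqref{bn3}, I would prove the pointwise bound
\begin{equation*}
\sup_{\xi\in\mathbb R}(1+|\xi|)^{10}|(\partial_t g)(\xi,t)|\lesssim\varepsilon_1^3(1+t)^{-1-p_1}
\end{equation*}
for some $p_1>0$, and then integrate in $t\in[t_1,t_2]$ to obtain \eqref{bn8}. Two distinct mechanisms are needed: the non-resonant terms $I_1,I_2,I_3$ are controlled by a normal form / integration-by-parts argument, while the resonant stationary-phase contribution of $I_0$ must cancel exactly the subtracted correction $\widetilde c|\xi|^{3/2}|\widehat f(\xi,t)|^2\widehat f(\xi,t)/(t+1)$ modulo an integrable remainder, which is the whole point of the definition of $H$ in \eqref{bn2}.

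\textbf{The non-resonant terms $I_1,I_2,I_3$.} Each phase $\Phi_j$ has a unique critical point in $(\eta,\sigma)$ at which $\Phi_j\neq 0$; for $I_1$ this is $(\eta,\sigma)=(2\xi/3,\xi/3)$ with $\Phi_1=(1-\sqrt 3)|\xi|^{1/2}$. I would split the $(\eta,\sigma)$-plane with a smooth cutoff into the regions $\{|\Phi_j|\gtrsim|\xi|^{1/2}\}$ and its complement. On the first, the identity $e^{it\Phi_j}=(i\Phi_j)^{-1}\partial_t e^{it\Phi_j}$ allows integration by parts in $t$; the boundary term acquires a factor $t^{-1}$ and is controlled by Lemma \ref{dispersive} together with the $Z$-norm bound, while the interior term, now containing $\partial_t\widehat f$ which is itself cubic, is effectively quintic and decays faster. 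On the complement, $\nabla_{\eta,\sigma}\Phi_j$ is non-degenerate away from the isolated critical point, and integration by parts in $(\eta,\sigma)$ using the $W$-norm control on $\partial_\xi\widehat f$ gives the needed gain. A Littlewood-Paley decomposition in the three frequency variables $\xi-\eta,\eta-\sigma,\sigma$ makes each piece $O(\varepsilon_1^3(1+t)^{-1-p_1})$.

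\textbf{The resonant term $I_0$.} Its phase $\Phi_0$ has the unique critical point $(\eta,\sigma)=(0,-\xi)$ where $\Phi_0=0$, with Hessian
\begin{equation*}
\begin{pmatrix}-2\Lambda''(\xi)&\Lambda''(\xi)\\ \Lambda''(\xi)&0\end{pmatrix},\qquad\text{determinant }-[\Lambda''(\xi)]^2=-\tfrac{1}{16}|\xi|^{-3},
\end{equation*}
a non-degenerate saddle of signature $(1,1)$. I would insert a smooth cutoff $\chi(\eta,\sigma)$ supported in $|\eta|^2+|\sigma+\xi|^2\le t^{-1+2\delta}$ with $\delta>0$ small; the complement yields $O(t^{-1-p_1})$ by integration by parts in $(\eta,\sigma)$ using non-degeneracy of $\nabla\Phi_0$ there, while the near region, by the two-dimensional stationary-phase formula with amplitude frozen at the critical point, gives
\begin{equation*}
I_0(\xi,t)=\frac{2\pi}{t\cdot\tfrac14|\xi|^{-3/2}}\,|\widehat f(\xi,t)|^2\widehat f(\xi,t)+\text{error}=\frac{8\pi|\xi|^{3/2}}{t}|\widehat f(\xi,t)|^2\widehat f(\xi,t)+\text{error},
\end{equation*}
since the signature-$(1,1)$ phase factor equals $e^{i\pi(1-1)/4}=1$ and $e^{it\Phi_0(0,-\xi)}=1$. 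With $\widetilde c=8\pi$, this leading term cancels the subtracted correction up to an $O(t^{-2})$ discrepancy between $t^{-1}$ and $(t+1)^{-1}$, exactly as arranged.

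\textbf{Main obstacle.} The technical heart of the proof is controlling the stationary-phase error for $I_0$ on the near-critical region: the amplitude $\widehat f$ is only in $L^\infty_\xi$ via the $Z$-norm and $\partial_\xi\widehat f$ only in $L^2$ via the $W$-norm, with slow growth $(1+t)^{p_0}$, so a standard $C^2$ Taylor expansion of the amplitude is unavailable. I would dyadically decompose $\mathrm{supp}\,\chi$ into shells of radius $2^m$ with $t^{-1/2+\delta}\le 2^m\le 1$, and on each shell balance a trivial volume bound of size $\approx 2^{2m}\|\widehat f\|_Z^3$ against an iterated-integration-by-parts bound exploiting the non-degeneracy of the Hessian together with the $L^2$-control on $\partial_\xi\widehat f$, with a Cauchy-Schwarz loss of $2^m$ from passing from $L^2$ to $L^1$ on the shell. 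Summing over $m$ at the crossover scale yields $O(t^{-1-p_1})$, the $(1+t)^{p_0}$ growth being absorbed by the constraint $p_0\le 1/1000$. Finally, the weight $(1+|\xi|)^{10}$ is distributed via the elementary bound $(1+|\xi|)^{10}\lesssim(1+|\xi-\eta|)^{10}+(1+|\eta-\sigma|)^{10}+(1+|\sigma|)^{10}$, which pulls it onto whichever factor of $\widehat f$ already carries the large frequency and absorbs it into one factor of $\|f\|_Z$.
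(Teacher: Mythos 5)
Your stationary-phase computation for the resonant term is correct and matches the paper: the critical point, the Hessian $\begin{pmatrix}-2\Lambda''(\xi)&\Lambda''(\xi)\\\Lambda''(\xi)&0\end{pmatrix}$ with determinant $-\tfrac{1}{16}|\xi|^{-3}$, the vanishing signature factor, and the resulting constant $\widetilde c=8\pi$. The general philosophy — stationary phase for the resonant cubic term, normal form for the elliptic ones — is also the one the paper uses. But the proposal has two genuine gaps.

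First, the opening reduction is wrong. You claim to prove the \emph{pointwise} bound $\sup_\xi(1+|\xi|)^{10}|(\partial_t g)(\xi,t)|\lesssim\varepsilon_1^3(1+t)^{-1-p_1}$ and then integrate in $t$. Such a pointwise bound does \emph{not} hold, and in fact your own treatment of $I_1,I_2,I_3$ immediately contradicts it: integrating by parts in $t$ via $e^{it\Phi_j}=(i\Phi_j)^{-1}\partial_t e^{it\Phi_j}$ is precisely what one does when a pointwise-in-time bound is unavailable. The same phenomenon occurs \emph{inside} $I_0$: when one of the three dyadic frequencies $k_1,k_2,k_3$ is very small ($\lesssim 2^{-19m/20}$, say) but the product of the smallest two is not negligible, the contribution to $I_0(\xi,s)$ cannot be bounded pointwise by $\varepsilon_1^3 2^{-(1+p_1)m}$ — this is exactly the content of the paper's Lemma \ref{bb13}, which is forced to integrate by parts in $s$ for $I_0$ as well (not only for $I_1,I_2,I_3$). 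The correct structure of the argument is therefore to integrate over dyadic time intervals $[2^m,2^{m+1}]$, decompose in the three input frequencies, and then prove bounds on the time-integrated quantity $\int_{t_1}^{t_2}e^{iH}(\ldots)\,ds$ directly, using pointwise bounds in some frequency regimes and integration by parts in $s$ in others; your reduction skips this decomposition and asserts something false.

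Second, when you do integrate by parts in time, the relevant oscillatory factor is $e^{iH(\xi,s)}e^{is\Phi}$, not $e^{is\Phi}$ alone, because $g=e^{iH}\widehat f$. Differentiating the full phase gives $\partial_s e^{i(H+s\Phi)}=i\bigl(\dot H(\xi,s)+\Phi\bigr)e^{i(H+s\Phi)}$, so the denominator is $\Phi+\dot H$, and $\partial_s\dot H$ appears when the $s$-derivative falls on the denominator. Since $\dot H=O(\varepsilon_1^2|\xi|^{3/2}/s)$ is small this does not destroy the estimate, but the term has to be carried and bounded (the paper does this via \eqref{bn93.5}, \eqref{bn97.15}, \eqref{bn143}); your identity $e^{it\Phi_j}=(i\Phi_j)^{-1}\partial_t e^{it\Phi_j}$ is not the one that arises. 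Relatedly, your claim that $\nabla_{\eta,\sigma}\Phi_0$ is "non-degenerate away from the isolated critical point" is not usable as stated: $\Lambda'(\theta)=\tfrac12|\theta|^{-1/2}\mathrm{sgn}(\theta)$ is singular at the origin, so the size of $\nabla\Phi_0$ is not uniform but depends on the dyadic sizes of $\xi-\eta$, $\eta-\sigma$, $\sigma$. This is why the paper cannot treat "far from the critical point" as a single region and instead runs separate arguments keyed on whether the three input frequencies are comparable to $|\xi|$ (Lemma \ref{bb2}), are separated from one another (Lemmas \ref{bb11}, \ref{bb12}), or contain a very low frequency (Lemmas \ref{bb10}, \ref{bb13}). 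Without that frequency bookkeeping, the integration-by-parts loss of a $\partial_\xi\widehat f$ factor, which is only $L^2$ and grows like $(1+t)^{p_0}$, cannot be absorbed in all cases.
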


\subsection{Proof of Lemma \ref{bigbound1}}\label{proof2.1} {\bf{Step 1.}} Assume $t\in\mathbb{R}$, $g\in H^{N_0}\cap Z\cap W$, and let $g^+:=g$, $g^-:=\overline{g}$. We define, for $(\iota_1,\iota_2,\iota_3)\in\{(+,+,-),(+,+,+),(+,-,-),(-,-,-)\}$,
\begin{equation}\label{bn99}
\begin{split}
I_{g,t}^{\iota_1,\iota_2,\iota_3}(\xi)&:=\int_{\mathbb{R}\times\mathbb{R}} e^{it\psi^{\iota_1,\iota_2,\iota_3}(\xi,\eta,\s)}
\widehat{g^{\iota_1}}(\xi-\eta)\widehat{g^{\iota_2}}(\eta-\sigma)\widehat{g^{\iota_3}}(\sigma)\,d\eta d\sigma
\\
\psi^{\iota_1,\iota_2,\iota_3}(\xi,\eta,\s)&:=\Lambda(\xi)-\iota_1\Lambda(\xi-\eta)-\iota_2\Lambda(\eta-\sigma)-\iota_3\Lambda(\sigma).
\end{split}
\end{equation}

It is clear from the definition that
\begin{equation}\label{alo7}
\big\|\mathcal{F}^{-1}(I_{g,t}^{\iota_1,\iota_2,\iota_3})-\mathcal{F}^{-1}(I_{g',t}^{\iota_1,\iota_2,\iota_3})\big\|_{H^{N_0}\cap Z}\lesssim (\|g\|_{H^{N_0}}+\|g'\|_{H^{N_0}})^2\|g-g'\|_{H^{N_0}},
\end{equation}
for any $g,g'\in H^{N_0}\cap Z\cap W$ and $t\in\mathbb{R}$. 

We would like to estimate also $\big\|\mathcal{F}^{-1}(I_{g,t}^{\iota_1,\iota_2,\iota_3})-\mathcal{F}^{-1}(I_{g',t}^{\iota_1,\iota_2,\iota_3})\big\|_{W}$. The key observation is that 
\begin{equation}\label{xidxiphi}
\xi \partial_\xi \psi^{\iota_1,\iota_2,\iota_3}(\xi,\eta,\s) =  - \eta \partial_\eta \psi^{\iota_1,\iota_2,\iota_3}(\xi,\eta,\s) 
- \s \partial_\s \psi^{\iota_1,\iota_2,\iota_3}(\xi,\eta,\s) + \frac{1}{2} \psi^{\iota_1,\iota_2,\iota_3}(\xi,\eta,\s),
\end{equation}
which follows easily from the identity $\xi\Lambda'(\xi)=\Lambda(\xi)/2$ and the definition of $\psi^{\iota_1,\iota_2,\iota_3}$. 
Applying $\xi\partial_\xi$ to $I_{g,t}^{\iota_1,\iota_2,\iota_3}$ we get
\begin{align*}
\xi(\partial I_{g,t}^{\iota_1,\iota_2,\iota_3})(\xi) & = 
\int_{\mathbb{R}\times\mathbb{R}} it (\xi \partial_\xi \psi^{\iota_1,\iota_2,\iota_3})(\xi,\eta,\s) e^{it\psi^{\iota_1,\iota_2,\iota_3}(\xi,\eta,\s)}
\widehat{g^{\iota_1}}(\xi-\eta) \widehat{g^{\iota_2}}(\eta-\sigma)\widehat{g^{\iota_3}}(\sigma)\,d\eta d\sigma
\\
& + \int_{\mathbb{R}\times\mathbb{R}} 
e^{it\psi^{\iota_1,\iota_2,\iota_3}(\xi,\eta,\s)}  \xi(\partial \widehat{g^{\iota_1}})(\xi-\eta) \widehat{g^{\iota_2}}(\eta-\sigma)\widehat{g^{\iota_3}}(\sigma)\,d\eta d\sigma.
\end{align*}
Using \eqref{xidxiphi} to integrate by parts in $\eta$ and $\s$, and gathering terms properly, we see that
\begin{equation}\label{bn100}
\begin{split}
& \xi(\partial I_{g,t}^{\iota_1,\iota_2,\iota_3}) (\xi) = \sum_{j=1}^5 L_{g,t,j}^{\iota_1,\iota_2,\iota_3}(\xi) 
\\
& L_{g,t,1}^{\iota_1,\iota_2,\iota_3}(\xi) := \int_{\mathbb{R}\times\mathbb{R}}
e^{it\psi^{\iota_1,\iota_2,\iota_3}(\xi,\eta,\s)} \cdot(\xi-\eta) \partial \widehat{g^{\iota_1}}(\xi-\eta)\cdot 
\widehat{g^{\iota_2}}(\eta-\sigma)\widehat{g^{\iota_3}}(\sigma)\,d\eta d\sigma
\\
& L_{g,t,2}^{\iota_1,\iota_2,\iota_3}(\xi) := \int_{\mathbb{R}\times\mathbb{R}} 
e^{it\psi^{\iota_1,\iota_2,\iota_3}(\xi,\eta,\s)}  \widehat{g^{\iota_1}}(\xi-\eta)\cdot 
(\eta-\s)\partial \widehat{g^{\iota_2}}(\eta-\sigma)\cdot\widehat{g^{\iota_3}}(\sigma)\,d\eta d\sigma
\\
& L_{g,t,3}^{\iota_1,\iota_2,\iota_3}(\xi) := \int_{\mathbb{R}\times\mathbb{R}} 
e^{it\psi^{\iota_1,\iota_2,\iota_3}(\xi,\eta,\s)} \widehat{g^{\iota_1}}(\xi-\eta) \widehat{g^{\iota_2}}(\eta-\sigma) \cdot\s\partial\widehat{g^{\iota_3}}(\sigma)\,d\eta d\sigma
\\
& L_{g,t,4}^{\iota_1,\iota_2,\iota_3}(\xi) := 2 \int_{\mathbb{R}\times\mathbb{R}} 
e^{it\psi^{\iota_1,\iota_2,\iota_3}(\xi,\eta,\s)} 
\widehat{g^{\iota_1}}(\xi-\eta) \widehat{g^{\iota_2}}(\eta-\sigma) \widehat{g^{\iota_3}}(\sigma)\,d\eta d\sigma
\\
& L_{g,t,5}^{\iota_1,\iota_2,\iota_3}(\xi) := \int_{\mathbb{R}\times\mathbb{R}} \frac{it}{2} \psi^{\iota_1,\iota_2,\iota_3}(\xi,\eta,\s) 
e^{it\psi^{\iota_1,\iota_2,\iota_3}(\xi,\eta,\s)}\widehat{g^{\iota_1}}(\xi-\eta) \widehat{g^{\iota_2}}(\eta-\sigma)\widehat{g^{\iota_3}}(\sigma)\,d\eta d\sigma.
\end{split}
\end{equation}

As a consequence of these formulas it is easy to see that
\begin{equation}\label{alo67}
\begin{split}
\big\|\mathcal{F}^{-1}(I_{g,t}^{\iota_1,\iota_2,\iota_3})-\mathcal{F}^{-1}(I_{g',t}^{\iota_1,\iota_2,\iota_3})\big\|_{W}&\lesssim (1+|t|)(\|g\|_{H^{N_0}}+\|g'\|_{H^{N_0}})^2(\|g-g'\|_{H^{N_0}}+\|g-g'\|_{W})\\
&+(1+|t|)(\|g\|_{H^{N_0}}+\|g'\|_{H^{N_0}})(\|g\|_{W}+\|g'\|_{W})\|g-g'\|_{H^{N_0}},
\end{split}
\end{equation}
for any $g,g'\in H^{N_0}\cap Z\cap W$ and $t\in\mathbb{R}$. In particular, setting $g'=0$,
\begin{equation}\label{alo68}
\big\|\mathcal{F}^{-1}(I_{g,t}^{\iota_1,\iota_2,\iota_3})\big\|_{W}\lesssim (1+|t|)\|g\|_{H^{N_0}}^2(\|g\|_{H^{N_0}}+\|g\|_{W}).
\end{equation}

{\bf{Step 2.}} We can prove now part (i) of the lemma, using a standard fixed-point argument. Indeed, given an interval $I\subseteq\mathbb{R}$, a point $t_0\in I$, and a function $g\in C(I:H^{N_0})$, we define
\begin{equation*}
\widehat{\Gamma(g)}(\xi,t):=\widehat{g_0}(\xi) - i (2\pi)^{-2}\int_{t_0}^t c_0I_{g(s),s}^{+,+,-}(\xi)+c_1I_{g(s),s}^{+,+,+}(\xi)+c_2I_{g(s),s}^{+,-,-}(\xi)+c_3I_{g(s),s}^{-,-,-}(\xi)\,ds,
\end{equation*}
where $g_0\in H^{N_0}\cap Z\cap W$. It follows from \eqref{alo7} and \eqref{alo67} that the mapping $g\to\Gamma(g)$ is a contraction on the complete metric space
\begin{equation*}
\begin{split}
&\mathcal{M}:=\{h\in C(I:H^{N_0}\cap Z\cap W):\sup_{t\in I}\|h(t)\|_{H^{N_0}\cap W}\leq 2\|g_0\|_{H^{N_0}\cap W}\},\\
&d_\mathcal{M}(h,h'):=\sup_{t\in I}\|h-h'\|_{H^{N_0}\cap Z\cap W},
\end{split}
\end{equation*} 
provided that $|I|$ is sufficiently small (depending only on $t_0$ and $\|g_0\|_{H^{N_0}\cap W}$). 

With the notation in the statement of the lemma, we notice that if $T'\leq T$ and $f\in C([0,T']:H^{N_0}\cap W)$ then
\begin{equation}\label{alo71}
\sup_{t\in[0,T']}\|f(t)\|_W\leq C(T,\sup_{t\in T}\|f(t)\|_{H^{N_0}},\|f(0)\|_{W}).
\end{equation}
Indeed, the bound \eqref{alo71} follows from the identity
\begin{equation}\label{alo72}
\widehat{f}(t_2)-\widehat{f}(t_1)=- i (2\pi)^{-2}\int_{t_1}^{t_2} c_0I_{f(s),s}^{+,+,-}(\xi)+c_1I_{f(s),s}^{+,+,+}(\xi)+c_2I_{f(s),s}^{+,-,-}(\xi)+c_3I_{f(s),s}^{-,-,-}(\xi)\,ds,
\end{equation}
and the bound \eqref{alo68}. 

Therefore we can divide the interval $[0,T]$ into finitely many subintervals, with sufficiently small length depending only on $T$, $\sup_{t\in T}\|f(t)\|_{H^{N_0}}$, and $\|f(0)\|_{W}$. We apply then the fixed-point argument above on each such subinterval, which is possible in view of the uniform bound \eqref{alo71}. It follows that $f\in C([0,T]:H^{N_0}\cap Z\cap W)$, and
\begin{equation*}
\sup_{t\in[0,T]}\|f\|_{H^{N_0}\cap Z\cap W}\leq C(T,\sup_{t\in T}\|f(t)\|_{H^{N_0}},\|f(0)\|_{H^{N_0}\cap Z\cap W}),
\end{equation*}
as desired.

{\bf{Step 3.}} To prove part (ii) we need to improve on the uniform {\it{apriori}} bound \eqref{alo71}, provided that the solution $f$ satisfies the stronger assumptions \eqref{bn4}. We use the formula \eqref{alo72} and reexamine the decomposition \eqref{bn100}. It follows that, for any $t_1,t_2\in[0,T]$,
\begin{equation}\label{alo75}
\begin{split}
\|f(t_2)-f(t_1)\|_W&\lesssim\sum_{j=1}^5\sum_{(\iota_1,\iota_2,\iota_3)}\Big\|\int_{t_1}^{t_2}L_{f(s),s,j}^{\iota_1,\iota_2,\iota_3}(\xi)\,ds\Big\|_{L^2_\xi}\\
&\lesssim \int_{t_1}^{t_2}\|f(s)\|_{W}\|e^{-is\Lambda} f(s)\|_{L^\infty}^2\,ds+ \sum_{(\iota_1,\iota_2,\iota_3)}\Big\|\int_{t_1}^{t_2}L_{f(s),s,5}^{\iota_1,\iota_2,\iota_3}(\xi)\,ds\Big\|_{L^2_\xi}.
\end{split}
\end{equation}

To estimate the contribution coming from $L_{f(s),s,5}^{\iota_1,\iota_2,\iota_3}$, we integrate by parts in $s$ using the identity
$$ \psi^{\iota_1,\iota_2,\iota_3}(\xi,\eta,\s)  e^{is\psi^{\iota_1,\iota_2,\iota_3}(\xi,\eta,\s)}
= -i \partial_s e^{is\psi^{\iota_1,\iota_2,\iota_3}(\xi,\eta,\s)}.
$$
We obtain
\begin{equation}
\begin{split}
\label{bn110}
\Big\|\int_{t_1}^{t_2} L_{f(s),s,5}^{\iota_1,\iota_2,\iota_3}&(\xi) ds \Big\|_{L^2_\xi}\lesssim \sum_{j=1}^2|t_j|\Big\|\int_{\mathbb{R}\times\mathbb{R}} e^{it_j\psi^{\iota_1,\iota_2,\iota_3}(\xi,\eta,\s)}\widehat{f^{\iota_1}}(\xi-\eta,t_j) \widehat{f^{\iota_t}}(\eta-\sigma,t_j)\widehat{f^{\iota_t}}(\sigma,t_j)\,d\eta d\sigma\Big\|_{L^2_\xi}
\\
& + \int_{t_1}^{t_2}s\Big\| \int_{\mathbb{R}\times\mathbb{R}}e^{is\psi^{\iota_1,\iota_2,\iota_3}(\xi,\eta,\s)} \partial_s \left[ \widehat{f^{\iota_1}}(\xi-\eta,s) \widehat{f^{\iota_2}}(\eta-\sigma,s)\widehat{f^{\iota_3}}(\sigma,s) \right] \,d\eta d\sigma \Big\|_{L^2_\xi}\, ds
\\
& + \int_{t_1}^{t_2} \Big\| \int_{\mathbb{R}\times\mathbb{R}} e^{is\psi^{\iota_1,\iota_2,\iota_3}(\xi,\eta,\s)} \left[ \widehat{f^{\iota_1}}(\xi-\eta,s) \widehat{f^{\iota_2}}(\eta-\sigma,s)\widehat{f^{\iota_3}}(\sigma,s) \right] \,d\eta d\sigma \Big\|_{L^2_\xi}\, ds.
\end{split}
\end{equation}

The term in the last line of \eqref{bn110} is majorized by
\begin{equation*}
C\int_{t_1}^{t_2}\|f(s)\|_{W}\|e^{-is\Lambda} f(s)\|_{L^\infty}^2\,ds.
\end{equation*}
The terms in the first line of \eqref{bn110} are majorized by
\begin{equation*}
C\sum_{j=1}^2|t_j|\| f(t_j) \|_{L^2}  \|e^{-it_j\Lambda} f(t_j)\|_{L^\infty}^2.
\end{equation*}
Finally, using also the identities \eqref{bn1}, the term in the second line of \eqref{bn110} is majorized by
\begin{equation*}
C\int_{t_1}^{t_2}s\|e^{-is\Lambda} f(s)\|_{L^\infty}^2\|\partial_s\widehat{f}(s)\|_{L^2}\,ds\lesssim \int_{t_1}^{t_2}s\|e^{-is\Lambda} f(s)\|_{L^\infty}^4\|\widehat{f}(s)\|_{L^2}\,ds.
\end{equation*}

Using the assumption \eqref{bn4} and Lemma \ref{dispersive}, we have
\begin{equation*}
\|e^{-is\Lambda} f(s)\|_{L^\infty}\lesssim \varepsilon_1(1+|s|)^{-1/2},\qquad \|f(s)\|_{H^{N_0}}+\|f(s)\|_W\lesssim \varepsilon_1(1+|s|)^{p_0}.
\end{equation*}
Therefore, using also \eqref{alo75}, 
\begin{equation*}
\|f(t)-f(0)\|_W\lesssim_{p_0}\varepsilon_1^3(1+t)^{p_0},
\end{equation*}
for any $t\in[0,T]$. The desired estimate \eqref{bn5} follows, which completes the proof of the lemma.

\section{Proof of Lemma \ref{bigbound2}}\label{proof2.2} In this section we give the proof of Lemma \ref{bigbound2}, which is the more technical part of the paper. 
With $P_k$ defined as in section \ref{proof1}, we let $f_k^+:=P_kf$, $f_k^-:=P_k\overline{f}$, and decompose
\begin{equation}\label{bn9}
\begin{split}
&I_0=\sum_{k_1,k_2,k_3\in\mathbb{Z}}I_{k_1,k_2,k_3}^{+,+,-},\\
&I_1=\sum_{k_1,k_2,k_3\in\mathbb{Z}}I_{k_1,k_2,k_3}^{+,+,+},
\quad I_2=\sum_{k_1,k_2,k_3\in\mathbb{Z}}I_{k_1,k_2,k_3}^{+,-,-},\quad I_3=\sum_{k_1,k_2,k_3\in\mathbb{Z}}I_{k_1,k_2,k_3}^{-,-,-},
\end{split}
\end{equation}
where, for $(\iota_1,\iota_2,\iota_3)\in\{(+,+,-),(+,+,+),(+,-,-),(-,-,-)\}$,
\begin{equation}\label{bn10} I_{k_1,k_2,k_3}^{\iota_1,\iota_2,\iota_3}(\xi,t):=\int_{\mathbb{R}\times\mathbb{R}}e^{it[\Lambda(\xi)-\iota_1\Lambda(\xi-\eta)-\iota_2\Lambda(\eta-\sigma)-\iota_3\Lambda(\sigma)]}
\widehat{f_{k_1}^{\iota_1}}(\xi-\eta,t)\widehat{f_{k_2}^{\iota_2}}(\eta-\sigma,t)\widehat{f_{k_3}^{\iota_3}}(\sigma,t)\,d\eta d\sigma.
\end{equation}

For \eqref{bn8} it suffices to prove that if $t_1\leq t_2\in[2^m-2,2^{m+1}]\cap[0,T]$, for some $m\in\{1,2\ldots\}$, then
\begin{equation*}
\|(1+|\xi|)^{10}(g(\xi,t_2)-g(\xi,t_1))\|_{L^\infty_\xi}\lesssim\varepsilon_1^32^{-p_1m}.
\end{equation*}
Using \eqref{bn3} and the decompositions \eqref{bn9}, it suffices to prove that if $k\in\mathbb{Z}$, $m\in\{1,2,\ldots\}$, $|\xi|\in[2^k,2^{k+1}]$, 
and $t_1\leq t_2\in[2^m-2,2^{m+1}]\cap[0,T]$ then
\begin{equation}\label{bn11}
\begin{split}
\sum_{k_1,k_2,k_3\in\mathbb{Z}}\Big|\int_{t_1}^{t_2} e^{iH(\xi,s)}\Big[I_{k_1,k_2,k_3}^{+,+,-}(\xi,s)-\widetilde{c}\frac{|\xi|^{3/2}
\widehat{f_{k_1}^+}(\xi,s)\widehat{f_{k_2}^+}(\xi,s)\widehat{f_{k_3}^-}(-\xi,s)}{s+1}\Big]\,ds\Big|\\
\lesssim \varepsilon_1^32^{-p_1m}2^{-10k_+},
\end{split}
\end{equation}
and, for any $(\iota_1,\iota_2,\iota_3)\in\{(+,+,+),(+,-,-),(-,-,-)\}$,
\begin{equation}\label{bn12}
\sum_{k_1,k_2,k_3\in\mathbb{Z}}\Big|\int_{t_1}^{t_2} e^{iH(\xi,s)}I_{k_1,k_2,k_3}^{\iota_1,\iota_2,\iota_3}(\xi,s)\,ds\Big|
\lesssim \varepsilon_1^32^{-p_1m}2^{-10k_+}.
\end{equation}

In view of \eqref{bn7}, we have
\begin{equation}\label{bn13}
\begin{split}
\|\widehat{f_l^{\pm}}(s)\|_{L^2}&\lesssim \varepsilon_12^{p_0 m}2^{-N_0 l_+},\\
\|(\partial\widehat{f_l^{\pm}})(s)\|_{L^2}&\lesssim \varepsilon_12^{p_0 m}2^{-l},\\
\|\widehat{f_l^{\pm}}(s)\|_{L^\infty}&\lesssim \varepsilon_12^{-10l_+},
\end{split}
\end{equation}
for any $l\in\mathbb{Z}$ and $s\in[2^m-2,2^{m+1}]\cap[0,T]$. Using only the $L^2$ bounds in the first line of \eqref{bn13} it is easy to see that
\begin{equation}\label{bn14}
|I_{k_1,k_2,k_3}^{\iota_1,\iota_2,\iota_3}(\xi,s)|\lesssim \varepsilon_1^32^{3p_0 m}2^{\min(k_1,k_2,k_3)/2}(1+2^{\max(k_1,k_2,k_3)})^{-N_0},
\end{equation}
for any $(\iota_1,\iota_2,\iota_3)\in\{(+,+,-),(+,+,+),(+,-,-),(-,-,-)\}$, $k_1,k_2,k_3\in\mathbb{Z}$. Moreover , using the $L^\infty$ bounds in the last 
line of \eqref{bn13},
\begin{equation*}
\Big|\frac{|\xi|^{3/2}\widehat{f_{k_1}^+}(\xi,s)\widehat{f_{k_2}^+}(\xi,s)\widehat{f_{k_3}^-}(\xi,s)}{s+1}\Big|
\lesssim 2^{-m}\varepsilon_1^32^{3k/2}(2^{\alpha k}+2^{10k})^{-3}\mathbf{1}_{[0,4]}(\max(|k_1-k|,|k_2-k|,|k_3-k|)).
\end{equation*}
Using these two bounds it is easy to see that the sums in \eqref{bn11} and \eqref{bn12} over those $(k_1,k_2,k_3)$ for which 
$\max(k_1,k_2,k_3)\geq m/50-1000$ or $\min(k_1,k_2,k_3)\leq -4m$ are bounded by $C\varepsilon_1^32^{-p_1m}2^{-10k_+}$, as desired. 
The remaining sums have only $Cm^3$ terms. Therefore it suffices to prove the desired estimates for each $(k_1,k_2,k_3)$ fixed; more precisely it suffices to prove the 
following lemma:

\begin{lem}\label{bb1}
Assume that $k\in\mathbb{Z}$, $m\in\mathbb{Z}\cap[20,\infty)$, $|\xi|\in[2^k,2^{k+1}]$, $t_1\leq t_2\in[2^{m-1},2^{m+1}]\cap[0,T]$, 
and $k_1,k_2,k_3\in[-4m,m/50-1000]\cap\mathbb{Z}$. Then
\begin{equation}\label{bn20}
\Big|\int_{t_1}^{t_2} e^{iH(\xi,s)}\Big[I_{k_1,k_2,k_3}^{+,+,-}(\xi,s)-\widetilde{c}\frac{|\xi|^{3/2}\widehat{f_{k_1}^+}(\xi,s)\widehat{f_{k_2}^+}(\xi,s)
\widehat{f_{k_3}^-}(-\xi,s)}{s+1}\Big]\,ds\Big|\lesssim \varepsilon_1^32^{-2p_1m}2^{-10k_+},
\end{equation}
and, for any $(\iota_1,\iota_2,\iota_3)\in\{(+,+,+),(+,-,-),(-,-,-)\}$,
\begin{equation}\label{bn21}
\Big|\int_{t_1}^{t_2} e^{iH(\xi,s)}I_{k_1,k_2,k_3}^{\iota_1,\iota_2,\iota_3}(\xi,s)\,ds\Big|\lesssim \varepsilon_1^32^{-2p_1m}2^{-10k_+}.
\end{equation}
\end{lem}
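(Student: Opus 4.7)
The plan is to treat the resonant sign combination $(+,+,-)$ separately from the three non-resonant combinations, in both cases by dyadic decomposition of the $(\eta,\sigma)$-integration around the stationary set of the phase $\psi^{\iota_1,\iota_2,\iota_3}$, combined with integration by parts in $(\eta,\sigma)$ and in $s$. The key structural observation is that, for $\Lambda(\xi)=|\xi|^{1/2}$, a direct computation shows $\psi^{+,+,-}$ has a single space-time resonance at $(\eta,\sigma)=(0,-\xi)$ (where both the phase and its $(\eta,\sigma)$-gradient vanish), whereas for the other three sign choices every critical point of $\psi^{\iota_1,\iota_2,\iota_3}$ in $(\eta,\sigma)$ satisfies $|\psi^{\iota_1,\iota_2,\iota_3}|\gtrsim 2^{k_{\max}/2}$, with $k_{\max}:=\max(k,k_1,k_2,k_3)$. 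This is precisely why only the $(+,+,-)$ interaction requires the subtraction in \eqref{bn20}.

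For \eqref{bn21}, I would localize $(\eta,\sigma)$ on dyadic annuli around the critical set of $\psi^{\iota_1,\iota_2,\iota_3}$. Far from this set, repeated integration by parts in $(\eta,\sigma)$ combined with the bounds \eqref{bn13} gives rapid decay and a summable series in the scale index. Near the critical set, where $|\psi^{\iota_1,\iota_2,\iota_3}|\gtrsim 2^{k_{\max}/2}$, I integrate by parts in $s$ via $\psi\,e^{is\psi}=-i\partial_s e^{is\psi}$. The boundary terms at $s=t_1,t_2$ are estimated directly by \eqref{bn14}, and the interior term contains $\partial_s\widehat{f}$ which, by \eqref{bn1} and the dispersive bound $\|e^{-is\Lambda}f\|_{L^\infty}\lesssim\varepsilon_1 s^{-1/2}$ from Lemma \ref{dispersive}, is $O(\varepsilon_1^3 s^{-1})$ in $L^\infty_\xi$. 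Combined with the $2^{-k_{\max}/2}$ gain from dividing by $\psi$, this produces the required $\varepsilon_1^3 2^{-(1+p_1)m}$ pointwise bound after integration over $[t_1,t_2]$.

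For \eqref{bn20}, I apply the same scheme but centered around the resonant point $(0,-\xi)$. Outside the critical ball of radius $2^\ell$ with $2^{2\ell}\sim 2^{3k/2}s^{-1}$, the spatial and temporal integrations by parts of the previous paragraph suffice. On the critical ball, I apply two-dimensional stationary phase at $(0,-\xi)$: the Hessian computation yields $\partial_\eta^2\psi^{+,+,-}=\tfrac{1}{2}|\xi|^{-3/2}$, $\partial_\sigma^2\psi^{+,+,-}=0$, $\partial_\eta\partial_\sigma\psi^{+,+,-}=-\tfrac{1}{4}|\xi|^{-3/2}$, so $|\det D^2\psi^{+,+,-}(0,-\xi)|=\tfrac{1}{16}|\xi|^{-3}$. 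The leading stationary phase contribution is thus $\widetilde{c}\,|\xi|^{3/2}s^{-1}\widehat{f_{k_1}^+}(\xi)\widehat{f_{k_2}^+}(\xi)\widehat{f_{k_3}^-}(-\xi)$ with $\widetilde{c}=8\pi$, which matches the subtraction in \eqref{bn20} exactly.

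The main obstacle will be controlling the stationary phase remainder on the critical ball after subtracting the leading term. The profiles $\widehat{f_{k_j}^{\pm}}$ vary across this ball at a rate governed by $\|\partial\widehat{f}\|_{L^2}\lesssim\varepsilon_1 2^{p_0 m}2^{-k}$, so a Taylor expansion about $(0,-\xi)$ combined with the $Z$-norm and $W$-norm bounds in \eqref{bn13} must be organized to produce pointwise decay strictly better than $s^{-1}$. Since $p_0$ is taken very small, a gain of $2^{-\delta m}$ for some $\delta>0$ does come out of this expansion in each parameter regime, but the bookkeeping is delicate and, in the worst regimes, requires an additional integration by parts in $s$ applied to the slowly-varying quantity $|\widehat{f}|^2\widehat{f}$ using $\partial_s\widehat{f}=O(\varepsilon_1^3 s^{-1})$ in $L^\infty_\xi$. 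Finally, the frequency configurations with $\max_j|k_j-k|\gg 1$ must be treated separately, but in those regimes $\psi^{+,+,-}$ stays away from $0$ on the relevant Fourier support, so the $(+,+,-)$ interaction is effectively non-resonant and falls under the analysis of the second paragraph.
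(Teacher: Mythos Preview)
Your overall strategy---stationary phase at the space-time resonance for $(+,+,-)$, phase ellipticity plus integration by parts in $s$ for the other three sign combinations, and treating the off-diagonal $(+,+,-)$ configurations as effectively non-resonant---matches the paper's architecture (Lemmas \ref{bb2}, \ref{bb10}--\ref{bb14}). The Hessian computation and the identification of $\widetilde c=8\pi$ are correct.

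There is, however, a genuine gap in the execution. You assert that $\partial_s\widehat f = O(\varepsilon_1^3 s^{-1})$ in $L^\infty_\xi$, and you rely on this twice (for the interior term after integrating by parts in $s$, and for the stationary-phase remainder). This bound is false: from \eqref{bn1} one has $|\partial_s\widehat f(\xi,s)|\le \|N(u)\|_{L^1}\lesssim \|u\|_{L^\infty}\|u\|_{L^2}^2\lesssim \varepsilon_1^3 s^{-1/2+2p_0}$, and the frequency-localized refinement (Lemma \ref{touse3}, see \eqref{touse4.1}) gives only $\|\partial_s\widehat{f_l^\pm}\|_{L^\infty}\lesssim \varepsilon_1 2^{-m/2}(2^{l/2}+2^{-m/2})$, which for the relevant frequencies $l\le m/50$ is far from $2^{-m}$. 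With only this pointwise input, your interior term after dividing by $\psi$ and integrating over $(\eta,\sigma)$ of measure $\sim 2^{\min+\mathrm{med}}$ does not produce a summable bound. The paper closes this by using instead the $L^2$ bound $\|\partial_s\widehat{f_l^\pm}\|_{L^2}\lesssim \varepsilon_1 2^{-m}$ from \eqref{touse4}, together with the multiplier lemma (Lemma \ref{touse}) which converts the oscillatory integral into a physical-space trilinear form and allows a $L^2\times L^2\times L^\infty$ estimate with the dispersive bound \eqref{bn13.5} placed on one factor; see \eqref{bn97.1} and \eqref{bn148.1}. Without this $L^2$-based mechanism (or an equivalent), your scheme does not close.

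A secondary remark: for the non-resonant signs the paper does not decompose around the critical set at all. The phase lower bound \eqref{bn142}, $|\Phi^{\iota_1,\iota_2,\iota_3}|\gtrsim 2^{\mathrm{med}(k_1,k_2,k_3)/2}$ (note: $\mathrm{med}$, not $\max$), holds uniformly on the support, so a single integration by parts in $s$ suffices (Lemma \ref{bb14}). Also, for the off-diagonal $(+,+,-)$ configurations it is not always the phase that is bounded below---in Lemmas \ref{bb11} and \ref{bb12} it is $|\partial_\eta\Phi|$ that is large and one integrates by parts in $\eta$, while in Lemma \ref{bb13} Case 2 the phase vanishes on a thin set near $\eta=0$ and an additional cutoff $\chi_{k,m}$ is needed before the temporal integration by parts.
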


We will prove this main lemma in several steps. More precisely, the bounds \eqref{bn20} follow from Lemma \ref{bb2}, Lemma \ref{bb10}, Lemma \ref{bb11}, Lemma \ref{bb12}, and Lemma \ref{bb13}. The bounds \eqref{bn21} follow from Lemma \ref{bb10} and Lemma \ref{bb14}.

We will use the bounds \eqref{bn13} and the $L^\infty$ bounds
\begin{equation}\label{bn13.5}
 \|e^{\mp is\Lambda}f_l^{\pm}(s)\|_{L^\infty}\lesssim \varepsilon_12^{-m/2},\qquad\text{ for any }l\in\mathbb{Z}\text{ and }s\in[t_1,t_2],
\end{equation}
which follow from \eqref{bn13} and Lemma \ref{dispersive}. We will also use the bounds in Lemma \ref{touse} and Lemma \ref{touse3} below:

\begin{lem}\label{touse}
 Assume that $m\in L^1(\mathbb{R}\times\mathbb{R})$ satisfies
\begin{equation}\label{touse1}
\Big\|\,\int_{\mathbb{R}\times\mathbb{R}}m(\eta,\sigma)e^{ix\eta}e^{iy\sigma}\,d\eta d\sigma\Big\|_{L^1_{x,y}}\leq A,
\end{equation}
for some $A\in(0,\infty)$. Then, for any $(p,q,r)\in\{(2,2,\infty),(2,\infty,2),(\infty,2,2)\}$,
\begin{equation}\label{touse2}
 \Big|\int_{\mathbb{R}\times\mathbb{R}}\widehat{f}(\eta)\widehat{g}(\sigma)\widehat{h}(-\eta-\sigma)m(\eta,\sigma)\,d\eta d\sigma\Big|
\lesssim A\|f\|_{L^p}\|g\|_{L^q}\|h\|_{L^r}.
\end{equation}
\end{lem}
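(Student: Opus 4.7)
The plan is to reduce the trilinear form to a bilinear pairing between the $L^1$ function coming from the hypothesis on $m$ and an explicit $L^\infty$ function built from $f,g,h$. By density I may assume $f,g,h$ are Schwartz, so that all formal manipulations below are justified.

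First I would set $K(x,y):=\int_{\mathbb{R}\times\mathbb{R}}m(\eta,\sigma)e^{ix\eta+iy\sigma}\,d\eta d\sigma$, so that the hypothesis \eqref{touse1} reads $\|K\|_{L^1_{x,y}}\leq A$. Since $m\in L^1$, Fourier inversion gives $m(\eta,\sigma)=(2\pi)^{-2}\int K(x,y)e^{-ix\eta-iy\sigma}\,dx dy$. Substituting into the trilinear integral and exchanging the order of integration, the left-hand side of \eqref{touse2} equals
\begin{equation*}
(2\pi)^{-2}\int_{\mathbb{R}\times\mathbb{R}}K(x,y)\left[\int_{\mathbb{R}\times\mathbb{R}}\widehat{f}(\eta)\widehat{g}(\sigma)\widehat{h}(-\eta-\sigma)e^{-ix\eta-iy\sigma}\,d\eta d\sigma\right]dx dy.
\end{equation*}
Writing $\widehat{h}(-\eta-\sigma)=\int h(z)e^{iz(\eta+\sigma)}\,dz$ and applying Fourier inversion in the $\eta$ and $\sigma$ variables separately, the inner bracket becomes $(2\pi)^2\Phi(x,y)$, where
\begin{equation*}
\Phi(x,y):=\int_{\mathbb{R}}f(x-z)g(y-z)h(z)\,dz.
\end{equation*}

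Therefore the trilinear form equals $\int K(x,y)\Phi(x,y)\,dx dy$, so that
\begin{equation*}
\Big|\int_{\mathbb{R}\times\mathbb{R}}\widehat{f}(\eta)\widehat{g}(\sigma)\widehat{h}(-\eta-\sigma)m(\eta,\sigma)\,d\eta d\sigma\Big|\leq\|K\|_{L^1}\|\Phi\|_{L^\infty}\leq A\|\Phi\|_{L^\infty}.
\end{equation*}
It then remains to bound $\|\Phi\|_{L^\infty}$ in each of the three cases. For $(p,q,r)=(2,2,\infty)$, pull out $\|h\|_{L^\infty}$ and apply the Cauchy-Schwarz inequality in $z$ to the product $f(x-z)g(y-z)$, obtaining $\|\Phi\|_{L^\infty}\leq\|f\|_{L^2}\|g\|_{L^2}\|h\|_{L^\infty}$; the other two triples are handled identically by symmetry, pulling out the $L^\infty$ factor and using Cauchy-Schwarz on the remaining two.

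There is essentially no serious obstacle in this argument; the only conceptual point is the identification of the function $\Phi$ whose Fourier transform is $\widehat{f}(\eta)\widehat{g}(\sigma)\widehat{h}(-\eta-\sigma)$, after which the estimate becomes a matter of recognizing that the worst placement for the "$L^\infty$" slot is in the $z$-integral defining $\Phi$, with the two remaining $L^2$ factors paired by Cauchy-Schwarz.
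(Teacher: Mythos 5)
Your argument is correct and is essentially the same as the paper's: rewrite the trilinear form in physical space as a pairing of $K$ against $\Phi(x,y)=\int f\,g\,h$, bound by $\|K\|_{L^1}\|\Phi\|_{L^\infty}$, then estimate $\|\Phi\|_{L^\infty}$ by pulling out the $L^\infty$ factor and applying Cauchy--Schwarz. The only difference is that you spell out the Fourier inversion and the final Hölder step explicitly, whereas the paper compresses them into a one-line change of variables; the (harmless) sign convention $f(x-z)$ versus $f(z-x)$ has no effect on the estimate.
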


\begin{proof}[Proof of Lemma \ref{touse}] We rewrite
\begin{equation*}
\begin{split}
\Big|\int_{\mathbb{R}\times\mathbb{R}}\widehat{f}(\eta)\widehat{g}(\sigma)\widehat{h}(-\eta-\sigma)m(\eta,\sigma)\,d\eta d\sigma\Big|
&=C\Big|\int_{\mathbb{R}^3}f(x)g(y)h(z)K(z-x,z-y)\,dxdydz\Big|,\\
&\lesssim \int_{\mathbb{R}^3}|f(z-x)g(z-y)h(z)|\,|K(x,y)|\,dxdydz,
\end{split}
\end{equation*}
where
\begin{equation*}
 K(x,y):=\int_{\mathbb{R}\times\mathbb{R}}m(\eta,\sigma)e^{ix\eta}e^{iy\sigma}\,d\eta d\sigma.
\end{equation*}
The desired bound \eqref{touse2} follows easily from \eqref{touse1}.
\end{proof}

\begin{lem}\label{touse3}
For any $l\in\mathbb{Z}$ and $s\in[2^{m-1},2^{m+1}]\cap[0,T]$ we have
\begin{equation}\label{touse4}
\|(\partial_s\widehat{f^{\pm}_l})(s)\|_{L^2}\lesssim \varepsilon_12^{3p_0m}2^{-20l_+}2^{-m}
\end{equation}
and
\begin{equation}\label{touse4.1}
\|(\partial_s\widehat{f^{\pm}_l})(s)\|_{L^\infty}\lesssim \varepsilon_12^{3p_0m}2^{-20l_+}2^{-m/2}(2^{l/2}+2^{-m/2}).
\end{equation}
\end{lem}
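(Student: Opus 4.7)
The plan starts by using the equation \eqref{bn1} to rewrite the profile's time-derivative in terms of the cubic nonlinearity evaluated on $u$. Extracting the phase $e^{is\Lambda(\xi)}$ from each $I_j$ identifies
\[
I_j(\xi,s) = (2\pi)^2 e^{is\Lambda(\xi)}\,\widehat{u^{\iota_1}u^{\iota_2}u^{\iota_3}}(\xi,s), \qquad u := e^{-is\Lambda}f,
\]
so that $|\partial_s\widehat f(\xi,s)| = |\widehat N(\xi,s)|$ with $N = c_0|u|^2u + c_1 u^3 + c_2 u\bar u^2 + c_3 \bar u^3$. Both claimed bounds thus reduce to estimates on $\|P_l N\|_{L^2}$ and $\|\varphi_l\widehat N\|_{L^\infty_\xi}$. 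The only input on $f$ will be through the bootstrap assumption \eqref{bn7}, Lemma \ref{dispersive} (yielding $\|u(s)\|_{L^\infty}\lesssim \varepsilon_1 2^{-m/2}$), and the Sobolev bound $\|u\|_{H^{N_0}}=\|f\|_{H^{N_0}}\lesssim \varepsilon_1 2^{p_0 m}$.

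The $L^2$ estimate \eqref{touse4} follows from a Moser-type inequality $\|N\|_{H^{N_0}}\lesssim \|u\|_{L^\infty}^2\|u\|_{H^{N_0}}$ combined with Bernstein's inequality $\|P_l N\|_{L^2}\lesssim 2^{-N_0 l_+}\|N\|_{H^{N_0}}$. Since $N_0=100\ge 20$ and $\varepsilon_1^3\le\varepsilon_1$, the product $(\varepsilon_1 2^{-m/2})^2\cdot\varepsilon_1 2^{p_0 m}\cdot 2^{-N_0 l_+}$ is dominated by the required bound $\varepsilon_1 2^{3p_0 m}2^{-20 l_+}2^{-m}$.

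For the $L^\infty$ estimate \eqref{touse4.1} I would argue differently in the two sub-ranges separated by the crossover $l\sim -m$. For $l\ge -m$ (where the $2^{l/2}\cdot 2^{-m/2}$ term dominates), the Hausdorff--Young bound $|\widehat N(\xi)|\le\|N\|_{L^1}\lesssim\|u\|_{L^\infty}\|u\|_{L^2}^2\lesssim \varepsilon_1^3 2^{2p_0 m-m/2}$ already suffices at $l\le 0$; for $l>0$ I integrate by parts $N_0$ times in $\xi$, gaining a factor $|\xi|^{-N_0}\lesssim 2^{-N_0 l}$, and control $\|\partial^{N_0}N\|_{L^1}$ via Leibniz and Gagliardo--Nirenberg interpolation of $\partial^j u$ between $L^\infty$ and $H^{N_0}$. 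For $l\le -m$ (where the $2^{-m}$ term dominates), I return to the oscillatory integral $I_j$ itself and apply two-dimensional stationary phase: for the resonant triple $(+,+,-)$ the phase $\Phi(\xi,\cdot,\cdot)$ has a unique non-degenerate critical point at $(\eta,\sigma)=(0,-\xi)$ with Hessian determinant $\sim(\Lambda''(\xi))^2 \sim 2^{-3l}$, yielding $|I_0(\xi,s)|\lesssim s^{-1}\cdot 2^{3l/2}\cdot\varepsilon_1^3$, which is far stronger than the required $\varepsilon_1 2^{3p_0 m-m}$. For the non-resonant sign choices the phase has a uniform lower bound in modulus and integration by parts in $(\eta,\sigma)$, packaged via Lemma \ref{touse} with multiplier $e^{is\Phi}$ localized dyadically, gives an even stronger bound.

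The main technical obstacle is the multiplier analysis in the last step: verifying that $\|\mathcal{F}^{-1}[\varphi_l(\xi)e^{is\Phi(\xi,\cdot,\cdot)}\chi(\eta,\sigma)]\|_{L^1_{\eta,\sigma}}$, uniformly in $\xi$ with $|\xi|\sim 2^l$, has the correct decay to feed into Lemma \ref{touse}, while simultaneously handling the mild singularity of $\Lambda'(\xi)=\tfrac{1}{2}\operatorname{sgn}(\xi)|\xi|^{-1/2}$ at the origin that enters through the stationary phase asymptotics. The somewhat unusual form $(2^{l/2}+2^{-m/2})$ on the right-hand side is precisely the envelope of the two mechanisms -- the product Hausdorff--Young bound (better for $l\ge -m$) versus the $s^{-1}$ gain from two-dimensional stationary phase (better for $l\le -m$).
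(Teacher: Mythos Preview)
Your $L^2$ argument is correct and essentially the same as the paper's (the paper phrases it via a Littlewood--Paley decomposition in physical space, but the content is identical). The $L^\infty$ argument, however, has a genuine gap.

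The failure is in the intermediate range $-m\le l\le 0$. Your Hausdorff--Young bound gives only
\[
|\varphi_l(\xi)\widehat N(\xi)|\le \|N\|_{L^1}\lesssim \|u\|_{L^\infty}\|u\|_{L^2}^2\lesssim \varepsilon_1^3\,2^{2p_0 m}2^{-m/2},
\]
which is \emph{independent of $l$}. The target bound \eqref{touse4.1} for $l\le 0$ is $\varepsilon_1 2^{3p_0 m}2^{-m/2}(2^{l/2}+2^{-m/2})$, so for, say, $l=-m/2$ you need $\lesssim \varepsilon_1 2^{3p_0 m-3m/4}$, whereas your bound is only $\lesssim \varepsilon_1 2^{-m/2}$---too large by a factor $2^{m/4}$. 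Since $p_0\le 1/1000$, the extra $2^{p_0 m}$ on the right does not rescue you. Your proposed stationary-phase patch only kicks in at $l\le -m$, so the whole interval $(-m, -Cp_0 m]$ is uncovered.

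The stationary-phase sketch itself is also not a proof as stated. The amplitude $\widehat f(\xi-\eta)\widehat f(\eta-\sigma)\widehat{\bar f}(\sigma)$ has only one derivative in $L^2$ (via the $W$-norm), not the pointwise smoothness required for classical stationary phase with a controllable remainder. Your formulation of the multiplier step as bounding $\|\mathcal F^{-1}[e^{is\Phi}\chi]\|_{L^1_{\eta,\sigma}}$ cannot work either: placing the oscillatory factor $e^{is\Phi}$ inside the multiplier makes that $L^1$-norm grow with $s$, not decay. What the paper actually does for $l\le 0$ is decompose dyadically in $k_1,k_2,k_3$, isolate the region $\min(k_j)\ge \max(l,-m)+10$ (the complementary region is handled directly by \eqref{touse4.4}, which already carries the crucial $2^{l/2}$ smallness via the $Z$-norm bound $\|\widehat{f_n}\|_{L^2}\lesssim \varepsilon_1 2^{n/2}$), and then integrate by parts \emph{once} in $\eta$ or in $\sigma$ using the lower bound on $|\partial_\eta\Phi|$ or $|\partial_\sigma\Phi|$ away from the stationary point. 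The derivative that lands on $\widehat{f_{k_j}}$ is then estimated in $L^2$ by the $W$-norm. This is the mechanism that produces the $2^{l/2}$ gain; a bare Hausdorff--Young estimate cannot see it.
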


\begin{proof}[Proof of Lemma \ref{touse3}] Using the identity \eqref{bn1}, it suffices to prove that
\begin{equation*}
\begin{split}
&\|\varphi_l(\xi)I_d(\xi,s)\|_{L^2_\xi}\lesssim \varepsilon_12^{3p_0m}2^{-20l_+}2^{-m},\\
&\|\varphi_l(\xi)I_d(\xi,s)\|_{L^\infty_\xi}\lesssim \varepsilon_12^{3p_0m}2^{-20l_+}2^{-m/2}(2^{l/2}+2^{-m/2}),
\end{split}
\end{equation*}
for $d\in\{0,1,2,3\}$. Using the decomposition \eqref{bn9}--\eqref{bn10}, it suffices to prove that for any $(\iota_1,\iota_2,\iota_3)\in\{(+,+,-),(+,+,+),(+,-,-),(-,-,-)\}$,
\begin{equation}\label{touse4.2}
\begin{split}
\sum_{\max(k_1,k_2,k_3)\geq l-10}\Big\|&\int_{\mathbb{R}\times\mathbb{R}}e^{is[-\iota_1\Lambda(\xi-\eta)-\iota_2\Lambda(\eta-\sigma)-\iota_3\Lambda(\sigma)]}\\
&\times\widehat{f_{k_1}^{\iota_1}}(\xi-\eta,s)\widehat{f_{k_2}^{\iota_2}}(\eta-\sigma,s)\widehat{f_{k_3}^{\iota_3}}(\sigma,s)\,d\eta d\sigma\Big\|_{L^2_\xi}\lesssim \varepsilon_12^{3p_0m}2^{-20l_+}2^{-m}
\end{split}
\end{equation}
and
\begin{equation}\label{touse4.3}
\begin{split}
\sum_{\max(k_1,k_2,k_3)\geq l-10}\Big\|\varphi_l(\xi)&\int_{\mathbb{R}\times\mathbb{R}}e^{is[-\iota_1\Lambda(\xi-\eta)-\iota_2\Lambda(\eta-\sigma)-\iota_3\Lambda(\sigma)]}\widehat{f_{k_1}^{\iota_1}}(\xi-\eta,s)\\
&\times\widehat{f_{k_2}^{\iota_2}}(\eta-\sigma,s)\widehat{f_{k_3}^{\iota_3}}(\sigma,s)\,d\eta d\sigma\Big\|_{L^\infty_\xi}\lesssim \varepsilon_12^{3p_0m}2^{-20l_+}2^{-m/2}(2^{l/2}+2^{-m/2}).
\end{split}
\end{equation}

We use first the bounds
\begin{equation}\label{touse4.4}
\|e^{\mp is\Lambda}f^{\pm}_n(s)\|_{L^2}\lesssim \varepsilon_12^{p_0m}2^{-N_0n_+}2^{n/2},\qquad \|e^{\mp is\Lambda}f^{\pm}_n(s)\|_{L^\infty}\lesssim \varepsilon_1\min(2^{-m/2},2^n2^{-10n_+}),
\end{equation}
see \eqref{bn13} and \eqref{bn13.5}. The bound \eqref{touse4.3} follows by passing to the physical space and estimating the highest frequency component in $L^2$ and the other two components in $L^\infty$. The bound \eqref{touse4.2} also follows if $l\geq 0$, by passing to the physical space and estimating the two highest frequency components in $L^2$ and the lowest frequency component in $L^\infty$. 

On the other hand, if $l\leq 0$ then we can still use \eqref{touse4.4} to estimate the contribution of the sum over $\min(k_1,k_2,k_3)\leq \max(l,-m)+10$ in \eqref{touse4.3}, or the contribution of the sum over $\max(k_1,k_2,k_3)\geq m/10$. Therefore, for \eqref{touse4.3} it remains to prove that
\begin{equation}\label{touse4.5}
\begin{split}
\Big|\int_{\mathbb{R}\times\mathbb{R}}e^{is[-\iota_1\Lambda(\xi-\eta)-\iota_2\Lambda(\eta-\sigma)-\iota_3\Lambda(\sigma)]}\widehat{f_{k_1}^{\iota_1}}(\xi-\eta,s)\widehat{f_{k_2}^{\iota_2}}(\eta-\sigma,s)\widehat{f_{k_3}^{\iota_3}}(\sigma,s)\,d\eta d\sigma\Big|\\
\lesssim \varepsilon_12^{2p_0m}2^{-m/2}(2^{l/2}+2^{-m/2}),
\end{split}
\end{equation}
provided that
\begin{equation}\label{touse4.6}
|\xi|\in[2^{l-1},2^{l+1}],\qquad l\leq 0,\qquad \max(l,-m)+10\leq k_1,k_2,k_3\leq m/10.
\end{equation}

In proving \eqref{touse4.5} we may assume, without loss of generality, that $k_1=\min(k_1,k_2,k_3)$ and therefore $|k_2-k_3|\leq 4$. We decompose the integral in the left-hand side of \eqref{touse4.5} into two parts, depending on the relative sizes of $|\eta|$ and $|\sigma|$, and integrate by parts. More precisely, let $\chi:\mathbb{R}\to[0,1]$ denote an even smooth function supported in $[-11/10,11/10]$ and equal to $1$ in $[-9/10,9/10]$, and define
\begin{equation*}
\begin{split}
&J_1=\int_{\mathbb{R}\times\mathbb{R}}\chi(\eta/\sigma)e^{is[-\iota_1\Lambda(\xi-\eta)-\iota_2\Lambda(\eta-\sigma)-\iota_3\Lambda(\sigma)]}\widehat{f_{k_1}^{\iota_1}}(\xi-\eta,s)\widehat{f_{k_2}^{\iota_2}}(\eta-\sigma,s)\widehat{f_{k_3}^{\iota_3}}(\sigma,s)\,d\eta d\sigma,\\
&J_2=\int_{\mathbb{R}\times\mathbb{R}}(1-\chi)(\eta/\sigma)e^{is[-\iota_1\Lambda(\xi-\eta)-\iota_2\Lambda(\eta-\sigma)-\iota_3\Lambda(\sigma)]}\widehat{f_{k_1}^{\iota_1}}(\xi-\eta,s)\widehat{f_{k_2}^{\iota_2}}(\eta-\sigma,s)\widehat{f_{k_3}^{\iota_3}}(\sigma,s)\,d\eta d\sigma.
\end{split}
\end{equation*}
To estimate $|J_1|$ we integrate by parts in $\sigma$. Recall that $\Lambda(\theta)=|\theta|^{1/2}$, which shows that
\begin{equation*}
|\partial_\sigma[-\iota_1\Lambda(\xi-\eta)-\iota_2\Lambda(\eta-\sigma)-\iota_3\Lambda(\sigma)]|\gtrsim \big||\Lambda'(\sigma-\eta)|-|\Lambda'(\sigma)|\big|\gtrsim 2^{k_1-3k_2/2},
\end{equation*}
provided that \eqref{touse4.6} holds, and, in addition, $|\eta/\sigma|\leq 11/10$, $|\xi-\eta|\in[2^{k_1-2},2^{k_1+2}]$, $|\eta-\sigma|\in[2^{k_2-2},2^{k_2+2}]$, and $|\sigma|\in[2^{k_3-2},2^{k_3+2}]$. Therefore, after integration by parts in $\sigma$, we estimate
\begin{equation*}
\begin{split}
|J_1|\lesssim \int_{\mathbb{R}\times\mathbb{R}}\frac{1}{2^m2^{k_1-3k_2/2}}&|\widehat{f_{k_1}^{\iota_1}}(\xi-\eta,s)|\Big[2^{-k_2}|\widehat{f_{k_2}^{\iota_2}}(\eta-\sigma,s)\widehat{f_{k_3}^{\iota_3}}(\sigma,s)|\\
&+|(\partial\widehat{f_{k_2}^{\iota_2}})(\eta-\sigma,s)||\widehat{f_{k_3}^{\iota_3}}(\sigma,s)|+|\widehat{f_{k_2}^{\iota_2}}(\eta-\sigma,s)||(\partial\widehat{f_{k_3}^{\iota_3}})(\sigma,s)|\Big]\,d\eta d\sigma.
\end{split}
\end{equation*}
Using the bounds in \eqref{bn13} it follows that
\begin{equation}\label{touse4.8}
|J_1|\lesssim 2^{2p_0m}2^{-m}.
\end{equation}

We estimate now $|J_2|$. Recalling the assumption $k_1=\min(k_1,k_2,k_3)$, we observe that $J_2$ vanishes unless $k_2,k_3\in[k_1,k_1+4]$. In this case we notice that \begin{equation*}
|\partial_\eta[-\iota_1\Lambda(\xi-\eta)-\iota_2\Lambda(\eta-\sigma)-\iota_3\Lambda(\sigma)]|\gtrsim \big||\Lambda'(\eta-\xi)|-|\Lambda'(\eta-\sigma)|\big|\gtrsim 2^{-k_2/2},
\end{equation*}
provided that \eqref{touse4.6} holds, and, in addition, $|\eta/\sigma|\geq 9/10$, $|\xi-\eta|\in[2^{k_1-2},2^{k_1+2}]$, $|\eta-\sigma|\in[2^{k_2-2},2^{k_2+2}]$, and $|\sigma|\in[2^{k_3-2},2^{k_3+2}]$. Therefore, after integration by parts in $\eta$, we estimate
\begin{equation*}
\begin{split}
|J_2|\lesssim \int_{\mathbb{R}\times\mathbb{R}}\frac{1}{2^m2^{-k_2/2}}&|\widehat{f_{k_3}^{\iota_3}}(\sigma,s)|\Big[2^{-k_2}|\widehat{f_{k_1}^{\iota_1}}(\xi-\eta,s)||\widehat{f_{k_2}^{\iota_2}}(\eta-\sigma,s)|\\
&+|(\partial\widehat{f_{k_1}^{\iota_1}})(\xi-\eta,s)||\widehat{f_{k_2}^{\iota_2}}(\eta-\sigma,s)|+|\widehat{f_{k_1}^{\iota_1}}(\xi-\eta,s)||(\partial\widehat{f_{k_2}^{\iota_2}})(\eta-\sigma,s)|\Big]\,d\eta d\sigma.
\end{split}
\end{equation*}
Using the bounds in \eqref{bn13} we see that $|J_2|\lesssim 2^{2p_0m}2^{-m}$ as well. The desired bound \eqref{touse4.5} follows using also \eqref{touse4.8}, which completes the proof of the lemma.
\end{proof}

\subsection{Proof of Lemma \ref{bb1}} We divide the proof into several parts. 

\begin{lem}\label{bb2}
The bounds \eqref{bn20} hold provided that
\begin{equation}\label{bn25}
k_1,k_2,k_3\in[k-20,k+20]\cap\mathbb{Z}.
\end{equation}
\end{lem}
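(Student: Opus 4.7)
Under \eqref{bn25}, all three input frequencies are comparable to $\xi\sim 2^k$, and a direct computation shows that the phase
\begin{equation*}
\psi(\xi,\eta,\sigma):=\Lambda(\xi)-\Lambda(\xi-\eta)-\Lambda(\eta-\sigma)+\Lambda(\sigma)
\end{equation*}
has on the support of the integrand a unique stationary point $(\eta_\ast,\sigma_\ast)=(0,-\xi)$: $\partial_\eta\psi=\Lambda'(\xi-\eta)-\Lambda'(\eta-\sigma)=0$ and $\partial_\sigma\psi=\Lambda'(\eta-\sigma)+\Lambda'(\sigma)=0$ force $\eta=0,\sigma=-\xi$. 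At this point $\psi=0$, the Hessian is nondegenerate with $|\det|=\tfrac1{16}|\xi|^{-3}$ and signature $0$, and the three amplitudes take the values $\widehat{f_{k_1}^+}(\xi)$, $\widehat{f_{k_2}^+}(\xi)$, $\widehat{f_{k_3}^-}(-\xi)$. The formal stationary-phase leading coefficient $2\pi/(s|\det|^{1/2})=8\pi|\xi|^{3/2}/s$ matches the correction subtracted in \eqref{bn20} with $\widetilde c=8\pi$. Justifying this asymptotic rigorously with an $s$-integrable error is exactly the content of the lemma; the modulation $e^{iH(\xi,s)}$ factors out of the $(\eta,\sigma)$-integration and plays no role here beyond being the mechanism that, via its $s$-derivative, makes the subtracted $1/s$ leading term integrable.

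The plan is a space-time-resonance decomposition. Fix a scale $\bar p\in\Z$ with $2^{\bar p}\approx 2^{-(1/2-\delta)m}$ for a small $\delta>0$ to be tuned, split the integral dyadically in $|\eta|\sim 2^p$ and $|\sigma+\xi|\sim 2^q$ using the cutoffs $\varphi^{(\cdot)}$, and separate the \emph{non-stationary} regime $\max(p,q)\geq\bar p$ from the \emph{stationary core} $p,q<\bar p$. In the non-stationary regime, $|\partial_\eta\psi|+|\partial_\sigma\psi|\gtrsim 2^{-3k/2}\max(|\eta|,|\sigma+\xi|)$ since these derivatives vanish linearly at the resonance, so a single integration by parts in the larger variable gains $(s\cdot 2^{-3k/2}\cdot 2^{\max(p,q)})^{-1}$. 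The resulting amplitude, which contains either $\partial\widehat{f_{k_i}^{\iota_i}}$ (controlled in $L^2$ by $\|\partial\widehat{f_l}\|_{L^2}\lesssim\varepsilon_1 2^{p_0m}2^{-l}$) or a derivative of the cutoff, is estimated via Lemma \ref{touse} in the arrangement $(L^2,L^2,L^\infty)$, using \eqref{bn13.5} for the $L^\infty$ slot. Summing in $(p,q)$ yields a pointwise bound on the non-stationary part $\lesssim \varepsilon_1^3 2^{O(p_0)m}2^{-m-\bar p}$ times a harmless power of $2^{-k_+}$; the $s$-integration over $[t_1,t_2]\subset [2^{m-1},2^{m+1}]$ is then $\lesssim \varepsilon_1^3 2^{-2p_1m}2^{-10k_+}$ for a suitable $p_1>0$.

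In the stationary core, Taylor-expand each amplitude at its stationary argument: $\widehat{f_{k_1}^+}(\xi-\eta)=\widehat{f_{k_1}^+}(\xi)+R_1$, $\widehat{f_{k_2}^+}(\eta-\sigma)=\widehat{f_{k_2}^+}(\xi)+R_2$, $\widehat{f_{k_3}^-}(\sigma)=\widehat{f_{k_3}^-}(-\xi)+R_3$. The fully-frozen term produces
\begin{equation*}
\widehat{f_{k_1}^+}(\xi)\widehat{f_{k_2}^+}(\xi)\widehat{f_{k_3}^-}(-\xi)\int_{\R^2} e^{is\psi(\xi,\eta,\sigma)}\,\varphi(\eta/2^{\bar p})\varphi((\sigma+\xi)/2^{\bar p})\,d\eta d\sigma;
\end{equation*}
since $2^{\bar p}\gg s^{-1/2}|\xi|^{3/4}$, the cutoff can be removed modulo $O(s^{-N})$ error, and direct stationary phase evaluates the scalar oscillatory integral to $\widetilde c|\xi|^{3/2}/s+O(s^{-3/2})$, cancelling the subtracted term to leading order. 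Each cross-term with a remainder $R_i=(\cdot-\cdot_\ast)\int_0^1\partial\widehat{f_{k_i}^{\iota_i}}(\cdots)\,dt$ is bounded via Lemma \ref{touse} in $(L^2,L^2,L^\infty)$: the linear prefactor contributes a gain of $2^{\bar p}$, $\|\partial\widehat{f}\|_{L^2}$ costs $\varepsilon_1 2^{p_0m}2^{-k}$, and the remaining two amplitudes are placed in $L^2$ and $L^\infty$ using \eqref{bn13}, \eqref{bn13.5}, giving a residual $\lesssim \varepsilon_1^3 2^{O(p_0)m}s^{-1}2^{\bar p-k}$, again integrable in $s$ to $\lesssim 2^{-2p_1m}2^{-10k_+}$.

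The main technical obstacle is the balance that pins down $\bar p$: the non-stationary estimate wants $2^{\bar p}$ \emph{large} (for a strong gradient lower bound in the complement of the core), while the Taylor-remainder estimate in the core wants $2^{\bar p}$ \emph{small} (so the remainders are genuinely small). Both demands point to the same scale $2^{\bar p}\sim s^{-1/2}$, the natural stationary-phase scale, and the strict positivity of $p_1$ survives only because the polynomial losses $2^{O(p_0)m}$ coming from the weighted norm $\|\partial\widehat{f}\|_{L^2}$ are dominated by the $2^{-m/2}$ stationary-phase gain, which is why $p_0\leq 1/1000$ is required. The subsidiary, but substantial, bookkeeping of which factor is hit by the derivative during IBP, and of all the Taylor cross-terms, is what accounts for the length of the analogous subsequent lemmas in section \ref{proof2.2}.
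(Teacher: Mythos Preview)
Your strategy is the paper's: isolate the unique stationary point, decompose dyadically around it, integrate by parts in the non-stationary annuli and estimate via Lemma~\ref{touse}, and in the core freeze the amplitudes and evaluate the scalar oscillatory integral to recover the $\widetilde c\,|\xi|^{3/2}/s$ correction. Two points, however, are not right as stated and would cause the argument to fail.

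First, the core scale must carry the $|\xi|^{3/4}$ factor: the natural stationary-phase width here is $|\xi|^{3/4}s^{-1/2}$, and the paper accordingly takes $2^{\bar l}\approx 2^{3k/4}2^{-49m/100}$ (after first disposing of $k\le -3m/5$ by a trivial $L^\infty$ bound). With your $k$-independent choice $2^{\bar p}\approx 2^{-(1/2-\delta)m}$, the Taylor-remainder bound in the core is $\varepsilon_1^3 2^{p_0m}2^{-k}2^{5\bar p/2}$, and for $k\sim -3m/5$ this is only $\sim 2^{-13m/20}$, which is not $\lesssim 2^{-(1+2p_1)m}$. Relatedly, your claimed remainder bound $s^{-1}2^{\bar p-k}$ cannot be right: there is no oscillatory gain in the core (the phase is $O(1)$ there), so no factor of $s^{-1}$ is available beyond what the explicit scalar integral produces.

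Second, a single integration by parts in the non-stationary region is borderline. The paper integrates by parts \emph{twice} in $\eta$: after one pass the piece where the derivative lands on the symbol $m_1$ (rather than on $\widehat f$) only yields $\varepsilon_1^3 2^{-m}$ with no extra decay if estimated crudely, which is not enough for the pointwise bound \eqref{bn28}. One can rescue a single IBP by placing both outer factors in localized $L^2$ via $\|\widehat{f_{k_j}}\cdot\mathbf 1_{|\cdot|\lesssim 2^l}\|_{L^2}\lesssim \varepsilon_1 2^{l/2}2^{-10k_+}$, but your sketch does not indicate this; the paper's second IBP is the clean way to get the needed gain.
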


\begin{proof}[Proof of Lemma \ref{bb2}]
 This is the main case, when the specific correction in the left-hand side of \eqref{bn20} is important. We will prove that
\begin{equation}\label{bn28}
 \Big|I_{k_1,k_2,k_3}^{+,+,-}(\xi,s)-\widetilde{c}\frac{|\xi|^{3/2}\widehat{f_{k_1}^+}(\xi,s)\widehat{f_{k_2}^+}(\xi,s)
\widehat{f_{k_3}^-}(-\xi,s)}{s+1}\Big|\lesssim 2^{-m}\varepsilon_1^32^{-2p_1m}2^{-10k_+},
\end{equation}
for any $s\in [t_1,t_2]$, which is clearly stronger than the desired bound \eqref{bn20}.

The bound \eqref{bn28} follows easily from the bound in the last line of \eqref{bn13} if $k\leq-3m/5$. Therefore, in the rest of the proof 
of \eqref{bn28} we may assume that
\begin{equation}\label{bn28.5}
 k\geq -3m/5.
\end{equation}

After changes of variables we rewrite
\footnote{The point of this change of variables is to be able to identify $\eta=\sigma=0$ as the unique critical point of the phase $\Phi$ in \eqref{bn26}.} 
\begin{equation*}
I_{k_1,k_2,k_3}^{+,+,-}(\xi,s)=\int_{\mathbb{R}^2}e^{is\Phi(\xi,\eta,\sigma)}
\widehat{f_{k_1}^+}(\xi+\eta,s)\widehat{f_{k_2}^+}(\xi+\sigma,s)\widehat{f_{k_3}^-}(-\xi-\eta-\sigma,s)\,d\eta d\sigma,
\end{equation*}
where
\begin{equation}\label{bn26}
\Phi(\xi,\eta,\sigma):=\Lambda(\xi)-\Lambda(\xi+\eta)-\Lambda(\xi+\sigma)+\Lambda(\xi+\eta+\sigma).
\end{equation}
Let $\overline{l}$ denote the smallest integer with the property that $2^{\overline{l}}\geq 2^{3k/4}2^{-49m/100}$ (in view of \eqref{bn28.5} $\overline{l}\leq k-10$),  and decompose
\begin{equation}\label{bn27}
I_{k_1,k_2,k_3}^{+,+,-}(\xi,s)=\sum_{l_1,l_2=\overline{l}}^{k+20}J_{l_1,l_2}(\xi,s),
\end{equation}
where, with the notation in \eqref{disp0}, for any $l_1,l_2\geq \overline{l}$,
\begin{equation}\label{bn29}
J_{l_1,l_2}(\xi,s):=\int_{\mathbb{R}^2}e^{is\Phi(\xi,\eta,\sigma)}\widehat{f_{k_1}^+}(\xi+\eta,s)\widehat{f_{k_2}^+}(\xi+\sigma,s)
\widehat{f_{k_3}^-}(-\xi-\eta-\sigma,s)\varphi^{(\overline{l})}_{l_1}(\eta)\varphi^{(\overline{l})}_{l_2}(\sigma)\,d\eta d\sigma.
\end{equation}

{\bf{Step 1.}} We show first that 
\begin{equation}\label{bn30}
|J_{l_1,l_2}(\xi,s)|\lesssim 2^{-m}\varepsilon_1^32^{-3p_1m}2^{-10k_+},\qquad\text{ if }l_2\geq\max(l_1,\overline{l}+1).
\end{equation}
For this we integrate by parts in $\eta$ in the formula \eqref{bn29}. Recalling that $\Lambda(\theta)=\sqrt{|\theta|}$, we observe that
\begin{equation}\label{bn30.5}
\big|(\partial_\eta\Phi)(\xi,\eta,\sigma)\big|=\big|\Lambda'(\xi+\eta+\sigma)-\Lambda'(\xi+\eta)\big|\gtrsim 2^{l_2}2^{-3k/2},
\end{equation}
provided that $|\xi+\eta|\approx 2^k, |\xi+\eta+\sigma|\approx 2^k, |\sigma|\approx 2^{l_2}$. After integration by parts in $\eta$ we see that
\begin{equation*}
|J_{l_1,l_2}(\xi,s)|\leq |J_{l_1,l_2,1}(\xi,s)|+|F_{l_1,l_2,1}(\xi,s)|+|G_{l_1,l_2,1}(\xi,s)|,
\end{equation*}
where
\begin{equation}\label{bn31}
\begin{split}
&J_{l_1,l_2,1}(\xi,s):=\int_{\mathbb{R}^2}e^{is\Phi(\xi,\eta,\sigma)}\widehat{f_{k_1}^+}(\xi+\eta,s)\widehat{f_{k_2}^+}(\xi+\sigma,s)
\widehat{f_{k_3}^-}(-\xi-\eta-\sigma,s)(\partial_\eta m_1)(\eta,\sigma)\,d\eta d\sigma,\\
&F_{l_1,l_2,1}(\xi,s):=\int_{\mathbb{R}^2}e^{is\Phi(\xi,\eta,\sigma)}(\partial\widehat{f_{k_1}^+})(\xi+\eta,s)\widehat{f_{k_2}^+}(\xi+\sigma,s)
\widehat{f_{k_3}^-}(-\xi-\eta-\sigma,s)m_1(\eta,\sigma)\,d\eta d\sigma,\\
&G_{l_1,l_2,1}(\xi,s):=\int_{\mathbb{R}^2}e^{is\Phi(\xi,\eta,\sigma)}\widehat{f_{k_1}^+}(\xi+\eta,s)\widehat{f_{k_2}^+}(\xi+\sigma,s)
(\partial\widehat{f_{k_3}^-})(-\xi-\eta-\sigma,s)m_1(\eta,\sigma)\,d\eta d\sigma,
\end{split}
\end{equation}
and
\begin{equation*}
 m_1(\eta,\sigma):=\frac{\varphi^{(\overline{l})}_{l_1}(\eta)\varphi_{l_2}(\sigma)}{s(\partial_\eta\Phi)(\xi,\eta,\sigma)}\cdot 
\varphi_{[k_1-2,k_1+2]}(\xi+\eta)\varphi_{[k_3-2,k_3+2]}(\xi+\eta+\sigma).
\end{equation*}

To estimate $|F_{l_1,l_2,1}(\xi,s)|$ we recall that $\xi$ and $s$ are fixed and use Lemma \ref{touse} with
\begin{equation*}
\begin{split}
&\widehat{f}(\eta):=e^{-is\Lambda(\xi+\eta)}(\partial\widehat{f_{k_1}^+})(\xi+\eta,s),\\
&\widehat{g}(\sigma):=e^{-is\Lambda(\xi+\sigma)}\widehat{f_{k_2}^+}(\xi+\sigma,s)\cdot \varphi(\sigma/2^{l_2+4}),\\
&\widehat{h}(\theta):=e^{is\Lambda(\xi-\theta)}\widehat{f_{k_3}^-}(-\xi+\theta,s)\cdot \varphi(\theta/2^{l_2+4}).
\end{split}
\end{equation*}
It is easy to see, compare with \eqref{bn30.5}, that $m_1$ satisfies the symbol-type estimates
\begin{equation}\label{bn32}
|(\partial_\eta^a\partial_\sigma^bm_1)(\eta,\sigma)|\lesssim (2^{-m}2^{-l_2}2^{3k/2})(2^{-al_1}2^{-bl_2})\cdot 
\mathbf{1}_{[0,2^{l_1+4}]}(|\eta|)\mathbf{1}_{[2^{l_2-4},2^{l_2+4}]}(|\sigma|),
\end{equation}
for any $a,b\in[0,20]\cap\mathbb{Z}$. It follows from \eqref{bn13} and \eqref{bn13.5} that
\begin{equation*}
 \|f\|_{L^2}\lesssim \varepsilon_12^{-k}2^{p_0m},\qquad \|g\|_{L^\infty}\lesssim\varepsilon_12^{-m/2},\qquad \|h\|_{L^2}\lesssim \varepsilon_12^{l_2/2}2^{-10k_+}.
\end{equation*}
It follows from \eqref{bn32} that
\begin{equation*}
 \|\mathcal{F}^{-1}(m_1)\|_{L^1}\lesssim 2^{-m}2^{-l_2}2^{3k/2}.
\end{equation*}
Therefore, using Lemma \ref{touse} and recalling that $2^{-l_2/2}\lesssim 2^{m/4}2^{-3k/8}$ and that $k\leq m/10$,
\begin{equation*}
 |F_{l_1,l_2,1}(\xi,s)|\lesssim \varepsilon_1^32^{-k}2^{p_0m}\cdot 2^{-m/2}\cdot 2^{l_2/2}2^{-10k_+}\cdot 2^{-m}2^{-l_2}2^{3k/2}
\lesssim \varepsilon_1^32^{-10k_+}2^{-m}\cdot 2^{-m/8}.
\end{equation*}

A similar argument shows that $|G_{l_1,l_2,1}(\xi,s)|\lesssim \varepsilon_1^32^{-10k_+}2^{-m}\cdot 2^{-m/8}$. Therefore, for \eqref{bn30} it suffices to prove that
\begin{equation}\label{bn35}
|J_{l_1,l_2,1}(\xi,s)|\lesssim \varepsilon_1^32^{-m}2^{-3p_1m}2^{-10k_+}.
\end{equation}

For this we integrate by parts again in $\eta$ and estimate
\begin{equation*}
|J_{l_1,l_2,1}(\xi,s)|\leq |J_{l_1,l_2,2}(\xi,s)|+|F_{l_1,l_2,2}(\xi,s)|+|G_{l_1,l_2,2}(\xi,s)|,
\end{equation*}
where
\begin{equation*}
\begin{split}
&J_{l_1,l_2,2}(\xi,s):=\int_{\mathbb{R}^2}e^{is\Phi(\xi,\eta,\sigma)}\widehat{f_{k_1}^+}(\xi+\eta,s)\widehat{f_{k_2}^+}(\xi+\sigma,s)
\widehat{f_{k_3}^-}(-\xi-\eta-\sigma,s)(\partial_\eta m_2)(\eta,\sigma)\,d\eta d\sigma,\\
&F_{l_1,l_2,2}(\xi,s):=\int_{\mathbb{R}^2}e^{is\Phi(\xi,\eta,\sigma)}(\partial\widehat{f_{k_1}^+})(\xi+\eta,s)\widehat{f_{k_2}^+}(\xi+\sigma,s)
\widehat{f_{k_3}^-}(-\xi-\eta-\sigma,s)m_2(\eta,\sigma)\,d\eta d\sigma,\\
&G_{l_1,l_2,2}(\xi,s):=\int_{\mathbb{R}^2}e^{is\Phi(\xi,\eta,\sigma)}\widehat{f_{k_1}^+}(\xi+\eta,s)\widehat{f_{k_2}^+}(\xi+\sigma,s)
(\partial\widehat{f_{k_3}^-})(-\xi-\eta-\sigma,s)m_2(\eta,\sigma)\,d\eta d\sigma.
\end{split}
\end{equation*}
and
\begin{equation*}
 m_2(\eta,\sigma):=\frac{(\partial_\eta m_1)(\eta,\sigma)}{s(\partial_\eta\Phi)(\xi,\eta,\sigma)}.
\end{equation*}
It follows from \eqref{bn32} that $m_2$ satisfies the stronger symbol-type bounds
\begin{equation}\label{bn44}
|(\partial_\eta^a\partial_\sigma^bm_2)(\eta,\sigma)|\lesssim (2^{-m}2^{-l_1-l_2}2^{3k/2})(2^{-m}2^{-l_2}2^{3k/2})(2^{-al_1}2^{-bl_2})\cdot 
\mathbf{1}_{[0,2^{l_1+4}]}(|\eta|)\mathbf{1}_{[2^{l_2-4},2^{l_2+4}]}(|\sigma|),
\end{equation}
for $a,b\in[0,19]\cap\mathbb{Z}$. Therefore, using Lemma \ref{touse} as before,
\begin{equation*}
 |F_{l_1,l_2,2}(\xi,s)|+|G_{l_1,l_2,2}(\xi,s)|\lesssim \varepsilon_1^32^{-10k_+}2^{-m}\cdot 2^{-m/8}.
\end{equation*}
Moreover, we can now estimate $|J_{l_1,l_2,2}(\xi,s)|$ using only \eqref{bn44} and the $L^\infty$ bounds in the last line of \eqref{bn13},
\begin{equation*}
|J_{l_1,l_2,2}(\xi,s)|\lesssim 2^{l_1+l_2}\cdot\varepsilon_1^32^{-30k_+}\cdot (2^{-m}2^{-l_1-l_2}2^{3k/2})^2\lesssim \varepsilon_1^32^{-10k_+}2^{-m}\cdot 2^{-m/50}.
\end{equation*}
This completes the proof of \eqref{bn35} and \eqref{bn30}.

A similar argument shows that
\begin{equation*}
|J_{l_1,l_2}(\xi,s)|\lesssim 2^{-m}\varepsilon_1^22^{-3p_1m}2^{-10k_+},\qquad\text{ if }l_1\geq\max(l_2,\overline{l}+1).
\end{equation*}

{\bf{Step 2.}} Using the decomposition \eqref{bn27}, for \eqref{bn28} it suffices to prove that
\begin{equation}\label{bn50}
 \Big|J_{\overline{l},\overline{l}}(\xi,s)-\widetilde{c}\frac{|\xi|^{3/2}\widehat{f_{k_1}^+}(\xi,s)\widehat{f_{k_2}^+}(\xi,s)
\widehat{f_{k_3}^-}(-\xi,s)}{s+1}\Big|\lesssim 2^{-m}\varepsilon_1^32^{-2p_1m}2^{-10k_+}.
\end{equation}

To prove \eqref{bn50} we notice that
\begin{equation*}
\Big|\Phi(\xi,\eta,\sigma)+\frac{\eta\sigma}{4|\xi|^{3/2}}\Big|\lesssim 2^{-5k/2}(|\eta|+|\sigma|)^3,
\end{equation*}
as long as $|\eta|+|\sigma|\leq 2^{k-5}$. Therefore, using the $L^\infty$ bounds in the last line of \eqref{bn13}
\begin{equation}\label{bn51}
 \Big|J_{\overline{l},\overline{l}}(\xi,s)-J'_{\overline{l},\overline{l}}(\xi,s)\Big|\lesssim \varepsilon_1^32^m2^{-5k/2}2^{5\overline{l}}2^{-30k_+}
\lesssim \varepsilon_1^32^{-5m/4}2^{-10k_+}.
\end{equation}
where
\begin{equation}\label{bn52}
J'_{\overline{l},\overline{l}}(\xi,s):=\int_{\mathbb{R}^2}e^{-is\eta\sigma/(4|\xi|^{3/2})}\widehat{f_{k_1}^+}(\xi+\eta,s)\widehat{f_{k_2}^+}(\xi+\sigma,s)
\widehat{f_{k_3}^-}(-\xi-\eta-\sigma,s)\varphi(2^{-\overline{l}}\eta)\varphi(2^{-\overline{l}}\sigma)\,d\eta d\sigma.
\end{equation}
Moreover, using the bounds in the last two lines of \eqref{bn13}
\begin{equation*}
|\widehat{f_{k_1}^+}(\xi+\eta,s)\widehat{f_{k_2}^+}(\xi+\sigma,s)\widehat{f_{k_3}^-}(-\xi-\eta-\sigma,s)-
\widehat{f_{k_1}^+}(\xi,s)\widehat{f_{k_2}^+}(\xi,s)\widehat{f_{k_3}^-}(-\xi,s)|\lesssim \varepsilon_1^32^{\overline{l}/2}\cdot 2^{-20k_+}2^{p_0m}2^{-k},
\end{equation*}
whenever $|\eta|+|\sigma|\leq 2^{\overline{l}+4}$. Therefore
\begin{equation}\label{bn53}
\begin{split}
\Big|J'_{\overline{l},\overline{l}}(\xi,s)-&\int_{\mathbb{R}^2}e^{-is\eta\sigma/(4|\xi|^{3/2})}\widehat{f_{k_1}^+}(\xi,s)\widehat{f_{k_2}^+}(\xi,s)
\widehat{f_{k_3}^-}(-\xi,s)\varphi(2^{-\overline{l}}\eta)\varphi(2^{-\overline{l}}\sigma)\,d\eta d\sigma\Big|\\
&\lesssim 2^{2\overline{l}}\varepsilon_1^32^{\overline{l}/2}\cdot 2^{-20k_+}2^{p_0m}2^{-k}\\
&\lesssim \varepsilon_1^32^{-9m/8}2^{-10k_+}.
\end{split}
\end{equation}

Starting from the general formula
\begin{equation*}
\int_{\mathbb{R}}e^{-ax^2-bx}\,dx=e^{b^2/(4a)}\sqrt{\pi}/\sqrt{a},\qquad a,b\in\mathbb{C},\,\,\Re\,a>0,
\end{equation*}
we calculate, for any $N\geq 1$,
\begin{equation*}
\int_{\mathbb{R}\times\mathbb{R}}e^{-ixy}e^{-x^2/N^2}e^{-y^2/N^2}\,dxdy=\sqrt{\pi}N\int_{\mathbb{R}}e^{-y^2/N^2}e^{-N^2y^2/4}\,dy=2\pi+O(N^{-1}).
\end{equation*}
Therefore, for $N\geq 1$,
\begin{equation*}
\int_{\mathbb{R}\times\mathbb{R}}e^{-ixy}\varphi(x/N)\varphi(y/N)\,dxdy=2\pi+O(N^{-1/2}).
\end{equation*}
Recalling also that $2^{\overline{l}}\approx |\xi|^{3/4}2^{-49m/100}$, it follows that
\begin{equation*}
\Big|\int_{\mathbb{R}^2}e^{-is\eta\sigma/(4|\xi|^{3/2})}\varphi(2^{-\overline{l}}\eta)\varphi(2^{-\overline{l}}\sigma)\,d\eta d\sigma-\frac{4|\xi|^{3/2}}{s}(2\pi)\Big|\lesssim 2^{3k/2}2^{-(1+2p_1)m}.
\end{equation*}
Therefore, using also \eqref{bn13},
\begin{equation}\label{bn54}
\begin{split}
\Big|\int_{\mathbb{R}^2}e^{-is\eta\sigma/(4|\xi|^{3/2})}&\widehat{f_{k_1}^+}(\xi,s)\widehat{f_{k_2}^+}(\xi,s)
\widehat{f_{k_3}^-}(-\xi,s)\varphi(2^{-\overline{l}}\eta)\varphi(2^{-\overline{l}}\sigma)\,d\eta d\sigma\\
&-\frac{8\pi|\xi|^{3/2}\widehat{f_{k_1}^+}(\xi,s)\widehat{f_{k_2}^+}(\xi,s)
\widehat{f_{k_3}^-}(-\xi,s)}{s}\Big|\lesssim \varepsilon_1^32^{-(1+2p_1)m}2^{-10k_+},
\end{split}
\end{equation}
and the bound \eqref{bn50} follows from \eqref{bn51}, \eqref{bn53}, and \eqref{bn54}.
\end{proof}

\begin{lem}\label{bb10}
The bounds \eqref{bn20} hold provided that
\begin{equation}\label{bn60}
\min(k_1,k_2,k_3)+\mathrm{med}(k_1,k_2,k_3)\leq -51m/50\quad\text{ and }\quad\max(|k_1-k|,|k_2-k|,|k_3-k|)\geq 21. 
\end{equation}
The bounds \eqref{bn21} hold provided that
\begin{equation}\label{bn61}
\min(k_1,k_2,k_3)+\mathrm{med}(k_1,k_2,k_3)\leq -51m/50.
\end{equation}
\end{lem}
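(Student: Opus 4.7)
The key observation is that hypothesis \eqref{bn61} (respectively \eqref{bn60}) is an extremely strong low-frequency condition: after relabelling so that $k_a\leq k_b\leq k_c$, we have $k_a+k_b\leq -51m/50$, which provides enough room that no oscillation or integration by parts in $\eta,\sigma$ is needed --- a single H\"older inequality in Fourier variables will suffice.

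\emph{Vanishing of the correction term.} First I would dispense with the correction term in \eqref{bn20}. It is of the form $\widetilde{c}\,|\xi|^{3/2}(s+1)^{-1}\widehat{f_{k_1}^+}(\xi,s)\widehat{f_{k_2}^+}(\xi,s)\widehat{f_{k_3}^-}(-\xi,s)$, and each factor $\widehat{f_{k_j}^{\pm}}(\pm\xi)$ is supported where $|\xi|\sim 2^{k_j}$, so the whole term vanishes unless $|k_j-k|\leq 1$ for every $j$. Since \eqref{bn60} assumes $\max_j|k_j-k|\geq 21$, this correction term is identically zero, and both \eqref{bn20} and \eqref{bn21} reduce to estimating $\big|\int_{t_1}^{t_2}I_{k_1,k_2,k_3}^{\iota_1,\iota_2,\iota_3}(\xi,s)\,ds\big|$ for the relevant sign patterns.

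\emph{Pointwise frequency H\"older.} Bounding the unit-modulus oscillation by $1$ and placing the factor at the largest frequency in $L^\infty$, Fubini gives
\[
|I_{k_1,k_2,k_3}^{\iota_1,\iota_2,\iota_3}(\xi,s)|\leq \|\widehat{f_{k_c}^{\iota_c}}\|_{L^\infty}\,\|\widehat{f_{k_a}^{\iota_a}}\|_{L^1}\,\|\widehat{f_{k_b}^{\iota_b}}\|_{L^1}.
\]
The third line of \eqref{bn13} gives $\|\widehat{f_l^\pm}\|_{L^\infty}\lesssim \varepsilon_1 2^{-10l_+}$, and since $\widehat{f_l^\pm}$ is supported in a set of measure $\lesssim 2^l$, we also get $\|\widehat{f_l^\pm}\|_{L^1}\lesssim \varepsilon_1\,2^{l-10l_+}$. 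The elementary support inequality $\max(k_1,k_2,k_3)\geq k-3$ (forced by $|\xi|\sim 2^k$) yields $(k_c)_+\geq k_+-3$, so combining the three factors and absorbing the finite loss,
\[
|I_{k_1,k_2,k_3}^{\iota_1,\iota_2,\iota_3}(\xi,s)|\lesssim \varepsilon_1^3\,2^{k_a+k_b}\,2^{-10k_+}.
\]
Using $k_a+k_b\leq -51m/50$ from \eqref{bn61} and $t_2-t_1\lesssim 2^m$,
\[
\Big|\int_{t_1}^{t_2} e^{iH(\xi,s)}\,I_{k_1,k_2,k_3}^{\iota_1,\iota_2,\iota_3}(\xi,s)\,ds\Big|\lesssim 2^m\cdot \varepsilon_1^3\,2^{-51m/50}\,2^{-10k_+} = \varepsilon_1^3\,2^{-m/50}\,2^{-10k_+},
\]
which is bounded by $\varepsilon_1^3\,2^{-2p_1m}\,2^{-10k_+}$ for any $p_1\in(0,1/100]$, as required.

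There is no real obstacle in this lemma: the hypothesis $\min+\mathrm{med}\leq -51m/50$ is so strong that even the crudest trilinear estimate --- triangle inequality in frequency, no stationary phase --- succeeds with exponential room to spare. The genuinely delicate situations, where only one of the three frequencies is very small or all three are comparable to $k$, force one to combine dispersive decay $\|e^{\mp is\Lambda}f_l^\pm\|_{L^\infty}\lesssim \varepsilon_1 2^{-m/2}$ with stationary phase in $\eta$ or $\sigma$ (as in the proof of Lemma \ref{bb2}); those cases are treated in Lemmas \ref{bb11}--\ref{bb14}.
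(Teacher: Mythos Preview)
Your proof is correct and matches the paper's approach: both use the triangle inequality in frequency with only the $L^\infty$ bound from \eqref{bn13}, exploiting that the two smallest-frequency factors contribute a total volume $2^{\min(k_1,k_2,k_3)+\mathrm{med}(k_1,k_2,k_3)}\leq 2^{-51m/50}$, which beats the time factor $2^m$. You are slightly more explicit than the paper in noting that the correction term in \eqref{bn20} vanishes under the support condition in \eqref{bn60}, and your $L^\infty\times L^1\times L^1$ arrangement is cosmetically different from (but equivalent to) the paper's ``all three in $L^\infty$ times measure of support'' formulation.
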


\begin{proof}[Proof of Lemma \ref{bb10}]
 Using only the $L^\infty$ bounds in the last line of \eqref{bn13} we estimate, for any $(\iota_1,\iota_2,\iota_3)\in\{(+,+,-),(+,+,+),(+,-,-),(-,-,-)\}$,
\begin{equation*}
\begin{split}
\Big|\int_{t_1}^{t_2} e^{iH(\xi,s)}I_{k_1,k_2,k_3}^{\iota_1,\iota_2,\iota_3}(\xi,s)\,ds\Big|
\lesssim 2^m\sup_{s\in[t_1,t_2]}\int_{\mathbb{R}\times\mathbb{R}}
|\widehat{f_{k_1}^{\iota_1}}(\xi-\eta,s)|\,|\widehat{f_{k_2}^{\iota_2}}(\eta-\sigma,s)|\,|\widehat{f_{k_3}^{\iota_3}}(\sigma,s)|\,d\eta d\sigma\\
\lesssim 2^m 2^{\min(k_1,k_2,k_3)}2^{\mathrm{med}(k_1,k_2,k_3)}\cdot\varepsilon_1^32^{-10k_+}, 
\end{split}
\end{equation*}
which clearly suffices in view of the assumptions \eqref{bn60} and \eqref{bn61}.
\end{proof}

\begin{lem}\label{bb11}
The bounds \eqref{bn20} hold provided that
\begin{equation}\label{bn70}
\begin{split}
&\max(|k_1-k|,|k_2-k|,|k_3-k|)\geq 21,\\
&\min(k_1,k_2,k_3)\geq -19m/20,\qquad\max(|k_1-k_3|,|k_2-k_3|)\geq 5.
\end{split}
\end{equation}
\end{lem}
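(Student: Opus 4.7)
The first observation is that under the hypotheses of Lemma \ref{bb11}, the correction term in \eqref{bn20} vanishes identically: the support condition $|\xi|\in[2^k,2^{k+1}]$ together with $\max(|k_1-k|,|k_2-k|,|k_3-k|)\geq 21$ forces at least one of the factors $\widehat{f_{k_i}^{\iota_i}}(\pm\xi,s)$ to be zero. Thus it suffices to establish the pointwise bound
\begin{equation*}
|I_{k_1,k_2,k_3}^{+,+,-}(\xi,s)|\lesssim 2^{-(1+3p_1)m}\varepsilon_1^3 2^{-10k_+}
\end{equation*}
for $s\in[t_1,t_2]$, and integrate in $s$ over an interval of length $\lesssim 2^m$.

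Next, we exploit the symmetry of $I_{k_1,k_2,k_3}^{+,+,-}$ under $k_1\leftrightarrow k_2$: the change of variables $\eta\mapsto \xi+\sigma-\eta$ interchanges the roles of $\widehat{f_{k_1}^+}(\xi-\eta)$ and $\widehat{f_{k_2}^+}(\eta-\sigma)$ while preserving the phase $\Phi(\xi,\eta,\sigma)=\Lambda(\xi)-\Lambda(\xi-\eta)-\Lambda(\eta-\sigma)+\Lambda(\sigma)$. Combined with the hypothesis $\max(|k_1-k_3|,|k_2-k_3|)\geq 5$, this allows us to assume without loss of generality that $|k_2-k_3|\geq 5$. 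In this configuration, $\partial_\sigma\Phi=\Lambda'(\eta-\sigma)+\Lambda'(\sigma)$, and since $|\eta-\sigma|\approx 2^{k_2}$ and $|\sigma|\approx 2^{k_3}$, the dominant of the two terms gives $|\partial_\sigma\Phi|\gtrsim 2^{-\min(k_2,k_3)/2}$ regardless of signs.

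Following the template of the proof of Lemma \ref{bb2}, Step 1, we integrate by parts in $\sigma$. Introducing the multiplier
\begin{equation*}
m_1(\eta,\sigma):=\frac{\varphi_{k_1}(\xi-\eta)\varphi_{k_2}(\eta-\sigma)\varphi_{k_3}(\sigma)}{s\cdot\partial_\sigma\Phi(\xi,\eta,\sigma)},
\end{equation*}
one verifies symbol bounds of the form $|\partial_\eta^a\partial_\sigma^b m_1|\lesssim 2^{-m}\,2^{\min(k_2,k_3)/2}\,2^{-ak_1-b\min(k_2,k_3)}$ on the relevant support. A single IBP produces a sum of terms containing either $\partial\widehat{f_{k_2}^+}$, $\partial\widehat{f_{k_3}^-}$, or derivatives of $m_1$; each is estimated via Lemma \ref{touse} combined with the norm bounds \eqref{bn13} and the dispersive estimate $\|e^{\mp is\Lambda}f_l^\pm(s)\|_{L^\infty}\lesssim\varepsilon_1 2^{-m/2}$ from \eqref{bn13.5}. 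The net pointwise bound is of order $\varepsilon_1^3\,2^{-m+\min(k_2,k_3)/2+Cp_0 m}\,2^{-10k_+}$; using the prior reduction $\max(k_i)\leq m/50-1000$ from Lemma \ref{bb1} and the current hypothesis $\min(k_i)\geq -19m/20$, this is indeed $\leq 2^{-(1+3p_1)m}\varepsilon_1^3 2^{-10k_+}$ for $p_0,p_1$ sufficiently small.

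The main obstacle will be the careful bookkeeping of the symbol bounds on $m_1$ across the various relative orderings of $k_1,k_2,k_3$, and deciding when a single IBP suffices versus when a second IBP (analogous to the $J_{l_1,l_2,2}$ step in Lemma \ref{bb2}) is needed to absorb the $2^{\min(k_2,k_3)/2}$ loss. The tightest subcase is when $\min(k_2,k_3)$ is positive and close to $m/50$, where iterating the IBP—yielding an additional factor $2^{-m+\min(k_2,k_3)/2}$—is essential to close the estimate.
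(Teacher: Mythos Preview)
Your approach is essentially the same as the paper's: reduce to a pointwise bound on $I_{k_1,k_2,k_3}^{+,+,-}$, exploit the separation $|k_2-k_3|\geq 5$ (the paper uses $|k_1-k_3|\geq 5$ in the rewritten coordinates, which is equivalent), and integrate by parts once using the lower bound $|\partial_\sigma\Phi|\gtrsim 2^{-\min(k_2,k_3)/2}$. However, your bookkeeping of the output is off, and this leads you to an unnecessary complication in the last paragraph.

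After one integration by parts and an application of Lemma \ref{touse}, the three resulting terms are each bounded by a product of $\|\mathcal{F}^{-1}(m_1)\|_{L^1}$ (or $\|\mathcal{F}^{-1}(\partial_\sigma m_1)\|_{L^1}$) with one $L^\infty$ factor and two $L^2$ factors. The $L^\infty$ factor contributes $\varepsilon_1 2^{-m/2}$ from \eqref{bn13.5}, which you cite but then omit from your tally. The correct net bound is therefore of order
\[
\varepsilon_1^3\,2^{-3m/2+2p_0m}\big(2^{-\min(k_2,k_3)/2}+2^{10\max(k_1,k_2,k_3)_+}\big),
\]
not $\varepsilon_1^3\,2^{-m+\min(k_2,k_3)/2+Cp_0m}$. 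Note also the sign: the worst term, coming from $\partial_\sigma m_1$, carries $2^{-\min(k_2,k_3)/2}$, not $2^{+\min(k_2,k_3)/2}$. With the hypothesis $\min(k_1,k_2,k_3)\geq -19m/20$ one has $2^{-\min(k_2,k_3)/2}\leq 2^{19m/40}$, and with $\max(k_i)\leq m/50$ one has $2^{10\max(k_i)_+}\leq 2^{m/5}$; either way the bound is $\lesssim \varepsilon_1^3 2^{-(1+2p_1)m}2^{-10k_+}$. Thus a \emph{single} integration by parts already closes the estimate in all subcases, including the one you flag as ``tightest'' ($\min(k_2,k_3)$ positive near $m/50$), and the second IBP you propose is not needed. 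This is exactly what the paper does.
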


\begin{proof}[Proof of Lemma \ref{bb11}] Recall the definition
\begin{equation}\label{bn71}
I_{k_1,k_2,k_3}^{+,+,-}(\xi,s)=\int_{\mathbb{R}^2}e^{is\Phi(\xi,\eta,\sigma)}\widehat{f_{k_1}^+}(\xi+\eta,s)\widehat{f_{k_2}^+}(\xi+\sigma,s)
\widehat{f_{k_3}^-}(-\xi-\eta-\sigma,s)\,d\eta d\sigma,
\end{equation}
where
\begin{equation*}
\Phi(\xi,\eta,\sigma)=\Lambda(\xi)-\Lambda(\xi+\eta)-\Lambda(\xi+\sigma)+\Lambda(\xi+\eta+\sigma).
\end{equation*}
It suffices to prove that, for any $s\in[t_1,t_2]$,
\begin{equation}\label{bn72}
\big|I_{k_1,k_2,k_3}^{+,+,-}(\xi,s)\big|\lesssim 2^{-m}\varepsilon_1^32^{-2p_1m}2^{-10k_+}.
\end{equation}

By symmetry, we may assume that $|k_1-k_3|\geq 5$ and notice that
\begin{equation}\label{bn73}
|(\partial_\eta\Phi)(\xi,\eta,\sigma)|=|-\Lambda'(\xi+\eta)+\Lambda'(\xi+\eta+\sigma)|\gtrsim 2^{-\min(k_1,k_3)/2},
\end{equation}
provided that $|\xi+\eta|\in[2^{k_1-2},2^{k_1+2}]$, $|\xi+\eta+\sigma|\in[2^{k_3-2},2^{k_3+2}]$. As in the proof of Lemma \ref{bb2}, we integrate by parts in $\eta$ to
estimate
\begin{equation*}
|I_{k_1,k_2,k_3}^{+,+,-}(\xi,s)|\leq |J_{1}(\xi,s)|+|F_{1}(\xi,s)|+|G_{1}(\xi,s)|,
\end{equation*}
where
\begin{equation*}
\begin{split}
&J_{1}(\xi,s):=\int_{\mathbb{R}^2}e^{is\Phi(\xi,\eta,\sigma)}\widehat{f_{k_1}^+}(\xi+\eta,s)\widehat{f_{k_2}^+}(\xi+\sigma,s)
\widehat{f_{k_3}^-}(-\xi-\eta-\sigma,s)(\partial_\eta m_3)(\eta,\sigma)\,d\eta d\sigma,\\
&F_{1}(\xi,s):=\int_{\mathbb{R}^2}e^{is\Phi(\xi,\eta,\sigma)}(\partial\widehat{f_{k_1}^+})(\xi+\eta,s)\widehat{f_{k_2}^+}(\xi+\sigma,s)
\widehat{f_{k_3}^-}(-\xi-\eta-\sigma,s)m_3(\eta,\sigma)\,d\eta d\sigma,\\
&G_{1}(\xi,s):=\int_{\mathbb{R}^2}e^{is\Phi(\xi,\eta,\sigma)}\widehat{f_{k_1}^+}(\xi+\eta,s)\widehat{f_{k_2}^+}(\xi+\sigma,s)
(\partial\widehat{f_{k_3}^-})(-\xi-\eta-\sigma,s)m_3(\eta,\sigma)\,d\eta d\sigma,
\end{split}
\end{equation*}
and
\begin{equation*}
 m_3(\eta,\sigma):=\frac{1}{s(\partial_\eta\Phi)(\xi,\eta,\sigma)}\cdot 
\varphi_{[k_1-1,k_1+1]}(\xi+\eta)\varphi_{[k_3-1,k_3+1]}(\xi+\eta+\sigma).
\end{equation*}
Using also \eqref{bn73}, it follows easily that
\begin{equation*}
 \|\mathcal{F}^{-1}(m_3)\|_{L^1}\lesssim 2^{-m}2^{\min(k_1,k_3)/2}, \qquad \|\mathcal{F}^{-1}(\partial_\eta m_3)\|_{L^1}\lesssim 2^{-m}2^{-\min(k_1,k_3)/2}.
\end{equation*}
We apply first Lemma \ref{touse} with
\begin{equation*}
\widehat{f}(\eta):=e^{-is\Lambda(\xi+\eta)}\widehat{f_{k_1}^+}(\xi+\eta,s),\,
\widehat{g}(\sigma):=e^{-is\Lambda(\xi+\sigma)}\widehat{f_{k_2}^+}(\xi+\sigma,s),\,
\widehat{h}(\theta):=e^{is\Lambda(\xi-\theta)}\widehat{f_{k_3}^-}(-\xi+\theta,s).
\end{equation*}
Using also \eqref{bn13} and \eqref{bn13.5} we conclude that
\begin{equation*}
 |J_{1}(\xi,s)|\lesssim \varepsilon_1^32^{-m/2}2^{2p_0m}2^{-N_0\max(k_1,k_2,k_3)_+}\cdot 2^{-m}2^{-\min(k_1,k_3)/2}.
\end{equation*}
Similarly, we apply Lemma \ref{touse} with 
\begin{equation*}
\widehat{f}(\eta):=e^{-is\Lambda(\xi+\eta)}(\partial\widehat{f_{k_1}^+})(\xi+\eta,s),\,
\widehat{g}(\sigma):=e^{-is\Lambda(\xi+\sigma)}\widehat{f_{k_2}^+}(\xi+\sigma,s),\,
\widehat{h}(\theta):=e^{is\Lambda(\xi-\theta)}\widehat{f_{k_3}^-}(-\xi+\theta,s),
\end{equation*}
and use \eqref{bn13} and \eqref{bn13.5} we conclude that
\begin{equation*}
 |F_{1}(\xi,s)|\lesssim \varepsilon_1^32^{-m/2}2^{2p_0m}2^{-k_1}2^{-N_0\max(k_2,k_3)_+}\cdot 2^{-m}2^{\min(k_1,k_3)/2}.
\end{equation*}
Finally, we apply Lemma \ref{touse} with 
\begin{equation*}
\widehat{f}(\eta):=e^{-is\Lambda(\xi+\eta)}\widehat{f_{k_1}^+}(\xi+\eta,s),\,
\widehat{g}(\sigma):=e^{-is\Lambda(\xi+\sigma)}\widehat{f_{k_2}^+}(\xi+\sigma,s),\,
\widehat{h}(\theta):=e^{is\Lambda(\xi-\theta)}(\partial\widehat{f_{k_3}^-})(-\xi+\theta,s),
\end{equation*}
and use \eqref{bn13} and \eqref{bn13.5} we conclude that
\begin{equation*}
 |G_{1}(\xi,s)|\lesssim \varepsilon_1^32^{-m/2}2^{2p_0m}2^{-k_3}2^{-N_0\max(k_2,k_1)_+}\cdot 2^{-m}2^{\min(k_1,k_3)/2}.
\end{equation*}
Therefore
\begin{equation*}
 |J_{1}(\xi,s)|+|F_{1}(\xi,s)|+|G_{1}(\xi,s)|\lesssim\varepsilon_1^32^{-m}2^{-10k_+}2^{-m(1/2-2p_0)}(2^{-\min(k_1,k_3)/2}+2^{10\max(k_1,k_2,k_3)_+}),
\end{equation*}
and the desired bound \eqref{bn72} follows from the assumptions $-\min(k_1,k_3)/2\leq 19m/40$, see \eqref{bn70}, and $10\max(k_1,k_2,k_3)_+\leq m/5$ 
(see the hypothesis of Lemma \ref{bb1}).
\end{proof}

\begin{lem}\label{bb12}
The bounds \eqref{bn20} hold provided that
\begin{equation}\label{bn80}
\begin{split}
&\max(|k_1-k|,|k_2-k|,|k_3-k|)\geq 21,\\
&\min(k_1,k_2,k_3)\geq -19m/20,\qquad\max(|k_1-k_3|,|k_2-k_3|)\leq 4.
\end{split}
\end{equation}
\end{lem}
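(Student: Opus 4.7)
\textbf{Proof plan for Lemma \ref{bb12}.} The strategy is to reduce this case to an integration-by-parts argument closely modeled on the proof of Lemma \ref{bb11}, the only new ingredient being a geometric argument that restores a good lower bound for $|\partial_\eta \Phi|$ in the support of the integrand, even though the hypothesis $|k_1-k_3|\leq 4$ prevents us from appealing directly to \eqref{bn73}.

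First I would observe that the hypotheses of Lemma \ref{bb12} force $k_1,k_2,k_3\geq k+13$. Indeed, $|k_i-k_j|\leq 8$ for all pairs $i,j$, so all three $k_j$ lie on the same side of $k$; and the identity $\xi=(\xi+\eta)+(\xi+\sigma)-(\xi+\eta+\sigma)$ makes $|\xi|\lesssim 2^{\max(k_1,k_2,k_3)+2}$, which is incompatible with all $k_j\leq k-13$. In this ``high-output'' regime the three momenta $a:=\xi+\eta$, $b:=\xi+\sigma$, $d:=\xi+\eta+\sigma$ all have magnitude $\approx 2^{k_1}\gg|\xi|$, and in particular $|\sigma|=|b-\xi|\approx 2^{k_1}$ and $|\eta|=|a-\xi|\approx 2^{k_1}$.

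The key estimate I would then establish is
\begin{equation*}
|(\partial_\eta\Phi)(\xi,\eta,\sigma)|=|\Lambda'(d)-\Lambda'(a)|\gtrsim 2^{-k_1/2}
\end{equation*}
throughout the relevant support. When $a$ and $d$ have opposite signs, both terms have modulus $\approx 2^{-k_1/2}$ and the bound is immediate. When $a$ and $d$ have the same sign, I would use the mean value identity $||d|^{1/2}-|a|^{1/2}|\approx ||d|-|a||/|a|^{1/2}$ together with the algebraic fact $d-a=\sigma$, which yields $||d|-|a||=|\sigma|\approx 2^{k_1}$, so that $|\Lambda'(d)-\Lambda'(a)|\approx |\sigma|/|a|^{3/2}\approx 2^{-k_1/2}$ as claimed.

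With this lower bound in hand, the rest of the argument is essentially a verbatim rerun of the IBP scheme used in Lemma \ref{bb11}. I would write $e^{is\Phi}=(is\partial_\eta\Phi)^{-1}\partial_\eta e^{is\Phi}$ and integrate by parts once in $\eta$, producing the three terms $J_1,F_1,G_1$ analogous to those in \eqref{bn31} with multiplier
\begin{equation*}
m(\eta,\sigma):=\frac{1}{s\,(\partial_\eta\Phi)(\xi,\eta,\sigma)}\cdot\varphi_{[k_1-1,k_1+1]}(\xi+\eta)\varphi_{[k_3-1,k_3+1]}(\xi+\eta+\sigma).
\end{equation*}
The symbol-type bounds $|\partial_\eta^a\partial_\sigma^b m|\lesssim 2^{-m}2^{k_1/2}\cdot 2^{-(a+b)k_1}$ (and similarly for $\partial_\eta m$), which follow from $|\partial_\eta^2\Phi|\lesssim 2^{-3k_1/2}$, give $\|\mathcal{F}^{-1}(m)\|_{L^1}\lesssim 2^{-m}2^{k_1/2}$. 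Applying Lemma \ref{touse} exactly as in the proof of Lemma \ref{bb11} (placing the factor carrying a $\partial\widehat{f}$ in $L^2$, one remaining factor in $L^2$, and the last in $L^\infty$ via \eqref{bn13.5}) yields
\begin{equation*}
|I_{k_1,k_2,k_3}^{+,+,-}(\xi,s)|\lesssim \varepsilon_1^3\,2^{-m}\,2^{-m(1/2-2p_0)}\,\bigl(2^{-k_1/2}+2^{10k_{1,+}}\bigr),
\end{equation*}
and the desired bound \eqref{bn20} follows from $-k_1/2\leq 19m/40$ (consequence of $k_1\geq -19m/20$) and $10k_{1,+}\leq m/5$ after integrating in $s\in[t_1,t_2]$.

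The main obstacle is the lower bound for $|\partial_\eta\Phi|$: unlike in Lemma \ref{bb11}, where the two values $\Lambda'(a),\Lambda'(d)$ live on different dyadic scales and cannot cancel, here they live on the same scale. The cancellation is prevented only by using the constraint $|\sigma|\approx 2^{k_1}$, which in turn relies on the preliminary observation that the geometry forces $k_j\geq k+13$. Once that is absorbed, the remainder of the proof is routine and mirrors Lemma \ref{bb11}.
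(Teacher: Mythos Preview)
Your approach is essentially the same as the paper's: observe that the hypotheses force all three frequencies to sit well above $k$, so that $|\sigma|\approx 2^{k_1}$, use this to get $|\partial_\eta\Phi|\gtrsim 2^{-k_1/2}$, then integrate by parts once in $\eta$ and apply Lemma~\ref{touse} exactly as in Lemma~\ref{bb11}. The paper records the same reduction (equation \eqref{bn81}) and the same phase bound (equation \eqref{bn84}).

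There is one technical point you should patch. Your multiplier
\[
m(\eta,\sigma)=\frac{1}{s\,(\partial_\eta\Phi)(\xi,\eta,\sigma)}\,\varphi_{[k_1-1,k_1+1]}(\xi+\eta)\,\varphi_{[k_3-1,k_3+1]}(\xi+\eta+\sigma)
\]
does \emph{not} enforce $|\sigma|\approx 2^{k_1}$ on its own support: with only these two cutoffs and $|k_1-k_3|\leq 4$, nothing prevents $\xi+\eta$ and $\xi+\eta+\sigma$ from being close, so $\sigma$ could be tiny and $\partial_\eta\Phi$ could vanish. Your lower bound for $|\partial_\eta\Phi|$ and the symbol estimates you claim for $m$ therefore fail as written. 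The paper fixes this by first inserting a harmless extra cutoff $\varphi_{[k_2-4,k_2+4]}(\sigma)$ into the integrand (see \eqref{bn82}), legitimate precisely because $|\sigma|=|(\xi+\sigma)-\xi|\approx 2^{k_2}$ on the support of $\widehat{f_{k_2}^+}(\xi+\sigma)$, and then carries that cutoff into the multiplier $m_4$. Once you add this cutoff, your symbol bounds and the $\|\mathcal{F}^{-1}m\|_{L^1}$ estimate go through. Also, your displayed final bound is missing the factor $2^{-10k_+}$; it is recovered from the $2^{-N_0(k_j)_+}$ coming out of \eqref{bn13} since here $k_j\geq k+10$, but you should display it to match \eqref{bn20}.
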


\begin{proof}[Proof of Lemma \ref{bb12}] We may assume that
\begin{equation}\label{bn81}
 \min(k_1,k_2,k_3)\geq k+10,
\end{equation}
and rewrite
\begin{equation}\label{bn82}
I_{k_1,k_2,k_3}^{+,+,-}(\xi,s)=\int_{\mathbb{R}^2}e^{is\Phi(\xi,\eta,\sigma)}\widehat{f_{k_1}^+}(\xi+\eta,s)\widehat{f_{k_2}^+}(\xi+\sigma,s)
\widehat{f_{k_3}^-}(-\xi-\eta-\sigma,s)\varphi_{[k_2-4,k_2+4]}(\sigma)\,d\eta d\sigma,
\end{equation}
where, as before,
\begin{equation*}
\Phi(\xi,\eta,\sigma)=\Lambda(\xi)-\Lambda(\xi+\eta)-\Lambda(\xi+\sigma)+\Lambda(\xi+\eta+\sigma).
\end{equation*}
It suffices to prove that, for any $s\in[t_1,t_2]$,
\begin{equation}\label{bn83}
\big|I_{k_1,k_2,k_3}^{+,+,-}(\xi,s)\big|\lesssim 2^{-m}\varepsilon_1^32^{-2p_1m}2^{-10k_+}.
\end{equation}

We argue as in the proof of Lemma \ref{bb11}. Notice that
\begin{equation}\label{bn84}
|(\partial_\eta\Phi)(\xi,\eta,\sigma)|=|-\Lambda'(\xi+\eta)+\Lambda'(\xi+\eta+\sigma)|\gtrsim 2^{-k_2/2},
\end{equation}
provided that $|\xi+\eta|\in[2^{k_1-2},2^{k_1+2}]$, $|\xi+\eta+\sigma|\in[2^{k_3-2},2^{k_3+2}]$, and $|\sigma|\approx 2^{k_2}$ (recall also that $2^{k_1}\approx 2^{k_2}\approx 2^{k_3}$). As before, we integrate by parts in $\eta$ to
estimate
\begin{equation*}
|I_{k_1,k_2,k_3}^{+,+,-}(\xi,s)|\leq |J_{2}(\xi,s)|+|F_{2}(\xi,s)|+|G_{2}(\xi,s)|,
\end{equation*}
where
\begin{equation*}
\begin{split}
&J_{2}(\xi,s):=\int_{\mathbb{R}^2}e^{is\Phi(\xi,\eta,\sigma)}\widehat{f_{k_1}^+}(\xi+\eta,s)\widehat{f_{k_2}^+}(\xi+\sigma,s)
\widehat{f_{k_3}^-}(-\xi-\eta-\sigma,s)(\partial_\eta m_4)(\eta,\sigma)\,d\eta d\sigma,\\
&F_{2}(\xi,s):=\int_{\mathbb{R}^2}e^{is\Phi(\xi,\eta,\sigma)}(\partial\widehat{f_{k_1}^+})(\xi+\eta,s)\widehat{f_{k_2}^+}(\xi+\sigma,s)
\widehat{f_{k_3}^-}(-\xi-\eta-\sigma,s)m_4(\eta,\sigma)\,d\eta d\sigma,\\
&G_{2}(\xi,s):=\int_{\mathbb{R}^2}e^{is\Phi(\xi,\eta,\sigma)}\widehat{f_{k_1}^+}(\xi+\eta,s)\widehat{f_{k_2}^+}(\xi+\sigma,s)
(\partial\widehat{f_{k_3}^-})(-\xi-\eta-\sigma,s)m_4(\eta,\sigma)\,d\eta d\sigma,
\end{split}
\end{equation*}
and
\begin{equation*}
 m_4(\eta,\sigma):=\frac{1}{s(\partial_\eta\Phi)(\xi,\eta,\sigma)}\cdot 
\varphi_{[k_1-1,k_1+1]}(\xi+\eta)\varphi_{[k_3-1,k_3+1]}(\xi+\eta+\sigma)\varphi_{[k_2-4,k_2+4]}(\sigma).
\end{equation*}
Using also \eqref{bn84}, it follows easily that
\begin{equation*}
 \|\mathcal{F}^{-1}(m_4)\|_{L^1}\lesssim 2^{-m}2^{k_2/2}, \qquad \|\mathcal{F}^{-1}(\partial_\eta m_4)\|_{L^1}\lesssim 2^{-m}2^{-k_2/2}.
\end{equation*}
We apply first Lemma \ref{touse} with
\begin{equation*}
\widehat{f}(\eta):=e^{-is\Lambda(\xi+\eta)}\widehat{f_{k_1}^+}(\xi+\eta,s),\,
\widehat{g}(\sigma):=e^{-is\Lambda(\xi+\sigma)}\widehat{f_{k_2}^+}(\xi+\sigma,s),\,
\widehat{h}(\theta):=e^{is\Lambda(\xi-\theta)}\widehat{f_{k_3}^-}(-\xi+\theta,s).
\end{equation*}
Using also \eqref{bn13} and \eqref{bn13.5} we conclude that
\begin{equation*}
 |J_{2}(\xi,s)|\lesssim \varepsilon_1^32^{-m/2}2^{2p_0m}2^{-N_0\max(k_1,k_2,k_3)_+}\cdot 2^{-m}2^{-k_2/2}.
\end{equation*}
Similarly, we apply Lemma \ref{touse} with 
\begin{equation*}
\widehat{f}(\eta):=e^{-is\Lambda(\xi+\eta)}(\partial\widehat{f_{k_1}^+})(\xi+\eta,s),\,
\widehat{g}(\sigma):=e^{-is\Lambda(\xi+\sigma)}\widehat{f_{k_2}^+}(\xi+\sigma,s),\,
\widehat{h}(\theta):=e^{is\Lambda(\xi-\theta)}\widehat{f_{k_3}^-}(-\xi+\theta,s),
\end{equation*}
and use \eqref{bn13} and \eqref{bn13.5} we conclude that
\begin{equation*}
 |F_{2}(\xi,s)|\lesssim \varepsilon_1^32^{-m/2}2^{2p_0m}2^{-k_1}2^{-N_0\max(k_2,k_3)_+}\cdot 2^{-m}2^{k_2/2}.
\end{equation*}
Finally, we apply Lemma \ref{touse} with 
\begin{equation*}
\widehat{f}(\eta):=e^{-is\Lambda(\xi+\eta)}\widehat{f_{k_1}^+}(\xi+\eta,s),\,
\widehat{g}(\sigma):=e^{-is\Lambda(\xi+\sigma)}\widehat{f_{k_2}^+}(\xi+\sigma,s),\,
\widehat{h}(\theta):=e^{is\Lambda(\xi-\theta)}(\partial\widehat{f_{k_3}^-})(-\xi+\theta,s),
\end{equation*}
and use \eqref{bn13} and \eqref{bn13.5} we conclude that
\begin{equation*}
 |G_{2}(\xi,s)|\lesssim \varepsilon_1^32^{-m/2}2^{2p_0m}2^{-k_3}2^{-N_0\max(k_2,k_1)_+}\cdot 2^{-m}2^{k_2/2}.
\end{equation*}
Recalling the assumption $2^{k_1}\approx 2^{k_2}\approx 2^{k_3}$, it follows that
\begin{equation*}
 |J_{1}(\xi,s)|+|F_{1}(\xi,s)|+|G_{1}(\xi,s)|\lesssim\varepsilon_1^32^{-m}2^{-N_0\max(k_2,0)}2^{-m(1/2-2p_0)}2^{-k_2/2},
\end{equation*}
and the desired bound \eqref{bn83} follows from the assumptions $-k_2/2\leq 19m/40$.
 \end{proof}
 
\begin{lem}\label{bb13}
The bounds \eqref{bn20} hold provided that
\begin{equation}\label{bn90}
\begin{split}
&\max(|k_1-k|,|k_2-k|,|k_3-k|)\geq 21,\\
&\min(k_1,k_2,k_3)\leq -19m/20,\qquad\min(k_1,k_2,k_3)+\mathrm{med}(k_1,k_2,k_3)\geq -51m/50.
\end{split}
\end{equation}
\end{lem}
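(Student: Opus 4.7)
The strategy is to integrate by parts in a variable that avoids placing a derivative on the tiny-frequency amplitude, and to exploit both the very large lower bound on the corresponding phase derivative and the improved $L^\infty$ bound on the tiny-frequency profile. By the symmetry in the roles of the three inputs, suppose $k_1=\min(k_1,k_2,k_3)\leq -19m/20$; the cases $k_2=\min$, $k_3=\min$ will be handled analogously, with the integration by parts carried out in $\eta$ or $\sigma$ respectively. The hypotheses together with $k_i\leq m/50-1000$ force $k_1\geq -26m/25$ and $k_2,k_3\in[-7m/100,m/50]$.

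Since $\widehat{f}^+_{k_1}(\xi-\eta)$ is supported on $|\xi-\eta|\sim 2^{k_1}$, one has $\eta\approx \xi$ throughout the integrand, and hence the very large lower bound
\[|\partial_\eta\Phi|=|\Lambda'(\xi-\eta)-\Lambda'(\eta-\sigma)|\gtrsim 2^{-k_1/2}\]
(the $\Lambda'(\xi-\eta)$ term dominates since $k_2\gg k_1$). I would integrate by parts in $\eta$ with symbol $m(\eta,\sigma)=\varphi_{k_1}(\xi-\eta)\varphi_{k_3}(\sigma)/(is\,\partial_\eta\Phi)$, which satisfies $\|\mathcal{F}^{-1}(m)\|_{L^1}\lesssim 2^{-m+k_1/2}$ and $\|\mathcal{F}^{-1}(\partial_\eta m)\|_{L^1}\lesssim 2^{-m-k_1/2}$, analogously to the symbol estimates in the proofs of Lemmas \ref{bb2} and \ref{bb11}. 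This decomposes $I^{+,+,-}_{k_1,k_2,k_3}(\xi,s)=J+F+G$, where $J$ carries the symbol derivative $\partial_\eta m$, $F$ carries the amplitude derivative $\partial\widehat{f}^+_{k_1}(\xi-\eta)$, and $G$ carries $\partial\widehat{f}^+_{k_2}(\eta-\sigma)$.

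For the mild terms $J$ and $G$, I would apply Lemma \ref{touse} with the triple $(L^\infty,L^2,L^2)$, placing $u^+_{k_1}$ in $L^\infty$ and using the crucial improvement $\|u^+_{k_1}\|_{L^\infty}\leq\|\widehat{f}^+_{k_1}\|_{L^1}\lesssim \varepsilon_1 2^{k_1}$, much smaller than the dispersive bound $\varepsilon_1 2^{-m/2}$ since $k_1\ll -m/2$. Combined with the standard $L^2$ bounds on $\widehat{f}^+_{k_2}$, $\widehat{f}^-_{k_3}$ (and $\|\partial\widehat{f}^+_{k_2}\|_{L^2}\lesssim \varepsilon_1 2^{p_0m-k_2}$ for $G$), a short computation would yield pointwise bounds of the form $\varepsilon_1^3 2^{-m(1+2p_1)-10k_+}$, the $10k_+$ loss being absorbed via the momentum-conservation inequality $\max(k_2,k_3)\geq k-O(1)$.

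The hardest step is the term $F$, where the large factor $\|\partial\widehat{f}^+_{k_1}\|_{L^2}\lesssim \varepsilon_1 2^{p_0m-k_1}$ can reach size $\sim 2^m$. My plan is to apply Lemma \ref{touse} with the triple $(L^2,L^\infty,L^2)$, putting $\partial\widehat{f}^+_{k_1}\in L^2$, the medium factor $u^+_{k_2}\in L^\infty$ via dispersive decay, and $u^-_{k_3}\in L^2$, yielding
\[|F(\xi,s)|\lesssim \varepsilon_1^3\,2^{-3m/2+p_0m-k_1/2+k_3/2-10k_{3,+}}.\]
The key point is that the coupling $\min+\mathrm{med}\geq -51m/50$ combined with $\mathrm{med}\leq m/50$ forces $-k_1/2\leq 13m/25$, so that integrating in time over $[t_1,t_2]$ gives an exponent bounded by $\sim -m/100+p_0m-10k_{3,+}$, which combined with $\max(k_2,k_3)\geq k-O(1)$ yields the desired bound $\varepsilon_1^3 2^{-2p_1 m-10k_+}$ for $p_1>0$ sufficiently small. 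The delicate numerical balancing of these exponents---in particular the need to choose between putting $u^+_{k_2}$ or $u^-_{k_3}$ in $L^\infty$ depending on which of $k_2,k_3$ is closer to $k$---together with handling the mirror-image subcases $k_2=\min$ and $k_3=\min$, is where the most care will be required.
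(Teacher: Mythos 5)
Your approach takes a genuinely different route from the paper's, and unfortunately the key step does not close. The paper explicitly remarks at the start of this proof that ``in this case we cannot prove pointwise bounds on $|I^{+,+,-}_{k_1,k_2,k_3}(\xi,s)|$ and we need to integrate by parts in $s$.'' Your frequency-space integration by parts (modelled on Lemmas \ref{bb11} and \ref{bb12}) produces exactly such a pointwise bound, and the arithmetic confirms the paper's warning.

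The failure is in your term $F$, where $\partial_\eta$ hits $\widehat{f_{k_1}^+}$. The derivative cost is $\|\partial\widehat{f_{k_1}^+}\|_{L^2}/\|\widehat{f_{k_1}^+}\|_{L^2}\sim 2^{-k_1}$, while the gain from the symbol is $\|\mathcal F^{-1}(m)\|_{L^1}\lesssim 2^{-m+k_1/2}$; the net gain per integration by parts is therefore only $2^{-m-k_1/2}$. Under \eqref{bn90} one has $k_1\in[-26m/25,-19m/20]$, so $-k_1/2$ can be as large as $13m/25$, and $2^{-m-k_1/2}$ is only $\gtrsim 2^{-12m/25}$ --- far short of the $2^{-m}$ needed for a pointwise-in-$s$ bound. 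Concretely, your own displayed estimate
\[
|F(\xi,s)|\lesssim \varepsilon_1^3\,2^{-3m/2+p_0m-k_1/2+k_3/2-10k_{3,+}}
\]
has worst-case exponent $-3/2+13/25+1/100+p_0=-97/100+p_0$, so after the factor $2^m$ from the $s$-integral the exponent becomes $+3m/100+p_0m$, \emph{not} $-m/100+p_0m$: a sign error that is the whole ballgame. No rebalancing of which factor goes in $L^\infty$ helps, since both $u^+_{k_2}$ and $u^-_{k_3}$ have the same dispersive bound $\lesssim 2^{-m/2}$, and the derivative must stay on the $k_1$ factor in $L^2$. A second integration by parts is also unavailable: it would either hit the tiny-frequency profile again (requiring $\|x^2 f\|_{L^2}$, which is not controlled) or hit the symbol, which does not compensate for the already-paid cost on $F$.

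The correct mechanism, which the paper exploits, is that under \eqref{bn90} the \emph{phase itself} (not its $\eta$-derivative) is bounded away from zero: $|\Phi(\xi,\eta,\sigma)|\gtrsim 2^{\mathrm{med}(k_1,k_2,k_3)/2}$ in Case 1 ($k_3=\min$), and a comparable bound after a small-$\eta$ cutoff in Case 2. This permits integration by parts in $s$, introducing a factor $(\Phi+\dot H)^{-1}$ with $\|\mathcal F^{-1}(m_5)\|_{L^1}\lesssim 2^{-\mathrm{med}(k_1,k_2,k_3)/2}$; the boundary terms are then small because the tiny-frequency factor contributes $2^{\min(k_1,k_2,k_3)}\lesssim 2^{-19m/20}$, and the bulk term is handled via the $\partial_s$ bounds of Lemma \ref{touse3}, which yield an extra $2^{-m}$. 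Incidentally you do not use the condition $\max(|k_1-k|,|k_2-k|,|k_3-k|)\geq 21$ at all; in the paper's Case 2 it is precisely this condition that controls the degenerate region where $\xi$ is itself comparable to the tiny frequency, requiring the cutoff $\chi_{k,m}$.
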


\begin{proof}[Proof of Lemma \ref{bb13}] In this case we cannot prove pointwise bounds on $\big|I_{k_1,k_2,k_3}^{+,+,-}(\xi,s)|$ and we need to integrate by parts in $s$. The desired bound \eqref{bn20} is equivalent to
\begin{equation}\label{bn91}
\Big|\int_{\mathbb{R}^2\times[t_1,t_2]}e^{iH(\xi,s)}e^{is\Phi(\xi,\eta,\sigma)}\widehat{f_{k_1}^+}(\xi+\eta,s)\widehat{f_{k_2}^+}(\xi+\sigma,s)
\widehat{f_{k_3}^-}(-\xi-\eta-\sigma,s)\,d\eta d\sigma ds\Big|\lesssim \varepsilon_1^32^{-2p_1m}2^{-10k_+},
\end{equation}
where
\begin{equation*}
\begin{split}
&\Phi(\xi,\eta,\sigma)=\Lambda(\xi)-\Lambda(\xi+\eta)-\Lambda(\xi+\sigma)+\Lambda(\xi+\eta+\sigma),\\
&H(\xi,s)=\frac{2c_0}{\pi}|\xi|^{3/2}\int_0^s|\widehat{f}(\xi,r)|^2 \frac{dr}{r+1}.
\end{split}
\end{equation*}
We consider two cases.

{\bf {Case 1:}} $k_3=\min(k_1,k_2,k_3)$. In this case, recalling also the assumption $k_1,k_2,k_3\in[-4m,m/50-1000]$, we have
\begin{equation}\label{bn92}
k_3\in[-21m/20,-19m/20],\qquad k_1,k_2\in[-m/10,m/50-1000].
\end{equation}
We see easily that
\begin{equation}\label{bn93}
-\Phi(\xi,\eta,\sigma)\geq 2^{\min(k_1,k_2)/2-100},
\end{equation}
provided that $|\xi+\eta|\in[2^{k_1-2},2^{k_1+2}]$, $|\xi+\sigma|\in[2^{k_2-2},2^{k_2+2}]$, and $|\xi+\eta+\sigma|\in[2^{k_3-2},2^{k_3+2}]$. Letting $\dot{H}(\xi,s):=(\partial_sH)(\xi,s)$, we notice that
\begin{equation}\label{bn93.5}
|\dot{H}(\xi,s)|\lesssim \varepsilon_1^22^{3k/2}2^{-20k_+}2^{-m}.
\end{equation}

We integrate by parts in $s$ to conclude that the integral in the left-hand side of \eqref{bn91} is dominated by
\begin{equation}\label{bn94}
\begin{split}
&\int_{t_1}^{t_2}\Big|\int_{\mathbb{R}^2}e^{is\Phi(\xi,\eta,\sigma)}\frac{d}{ds}\Big[\frac{1}{\Phi(\xi,\eta,\sigma)+\dot{H}(\xi,s)}\widehat{f_{k_1}^+}(\xi+\eta,s)\widehat{f_{k_2}^+}(\xi+\sigma,s)
\widehat{f_{k_3}^-}(-\xi-\eta-\sigma,s)\Big]\,d\eta d\sigma\Big| ds\\
&+\sum_{j=1}^2\Big|\int_{\mathbb{R}^2}e^{it_j\Phi(\xi,\eta,\sigma)}\cdot\frac{1}{\Phi(\xi,\eta,\sigma)+\dot{H}(\xi,t_j)}\widehat{f_{k_1}^+}(\xi+\eta,t_j)\widehat{f_{k_2}^+}(\xi+\sigma,t_j)
\widehat{f_{k_3}^-}(-\xi-\eta-\sigma,t_j)\,d\eta d\sigma\Big|\\
&:=B^0(\xi)+\sum_{j=1}^2B_j(\xi).
\end{split}
\end{equation}

Let
\begin{equation*}
 m_5(\eta,\sigma):=\frac{1}{\Phi(\xi,\eta,\sigma)+\dot{H}(\xi,s)}\cdot 
\varphi_{[k_1-1,k_1+1]}(\xi+\eta)\varphi_{[k_2-1,k_2+1]}(\xi+\sigma)\varphi_{[k_3-1,k_3+1]}(\xi+\eta+\sigma).
\end{equation*}
Using \eqref{bn92}--\eqref{bn93.5} and integration by parts it is easy to see that, for any $s\in[t_1,t_2]$,
\begin{equation}\label{bn95}
\|\mathcal{F}^{-1}(m_5)\|_{L^1}\lesssim 2^{-\min(k_1,k_2)/2}.
\end{equation}
Using the $L^\infty$ bound in \eqref{bn13} we estimate, for $j\in\{1,2\}$,
\begin{equation}\label{bn96}
B_j(\xi)\lesssim \varepsilon_1^3\|m_5\|_{L^1}\lesssim \varepsilon_1^32^{k_3}2^{\min(k_1,k_2)/2}.
\end{equation}

Now we estimate
\begin{equation}\label{bn97}
\begin{split}
&B^0(\xi)\lesssim 2^m\sup_{s\in[t_1,t_2]}[B^0_0(\xi,s)+B^0_1(\xi,s)+B^0_2(\xi,s)+B^0_3(\xi,s)],\\  &B^0_0(\xi,s):=\int_{\mathbb{R}^2}\Big|\frac{(\partial_s\dot{H})(\xi,s)}{(\Phi(\xi,\eta,\sigma)+\dot{H}(\xi,s))^2}\widehat{f_{k_1}^+}(\xi+\eta,s)\widehat{f_{k_2}^+}(\xi+\sigma,s)
\widehat{f_{k_3}^-}(-\xi-\eta-\sigma,s)\Big|\,d\eta d\sigma,\\
&B^0_1(\xi,s):=\Big|\int_{\mathbb{R}^2}e^{is\Phi(\xi,\eta,\sigma)}m_5(\eta,\sigma)(\partial_s\widehat{f_{k_1}^+})(\xi+\eta,s)\widehat{f_{k_2}^+}(\xi+\sigma,s)
\widehat{f_{k_3}^-}(-\xi-\eta-\sigma,s)\,d\eta d\sigma\Big|,\\
&B^0_2(\xi,s):=\Big|\int_{\mathbb{R}^2}e^{is\Phi(\xi,\eta,\sigma)}m_5(\eta,\sigma)\widehat{f_{k_1}^+}(\xi+\eta,s)(\partial_s\widehat{f_{k_2}^+})(\xi+\sigma,s)
\widehat{f_{k_3}^-}(-\xi-\eta-\sigma,s)\,d\eta d\sigma\Big|,\\
&B^0_3(\xi,s):=\Big|\int_{\mathbb{R}^2}e^{is\Phi(\xi,\eta,\sigma)}m_5(\eta,\sigma)\widehat{f_{k_1}^+}(\xi+\eta,s)\widehat{f_{k_2}^+}(\xi+\sigma,s)
(\partial_s\widehat{f_{k_3}^-})(-\xi-\eta-\sigma,s)\,d\eta d\sigma\Big|.
\end{split}
\end{equation}

As before, we combine Lemma \ref{touse}, \eqref{bn95}, and the bounds \eqref{bn13}, \eqref{bn13.5}, and \eqref{touse4} to conclude that
\begin{equation}\label{bn97.1}
\begin{split}
\sup_{s\in[t_1,t_2]}[B^0_1(\xi,s)+B^0_2(\xi,s)+B^0_3(\xi,s)]&\lesssim 2^{-\min(k_1,k_2)/2}\cdot\varepsilon_1 2^{-m/2}\cdot \varepsilon_12^{\min(k_1,k_2)/2}\cdot \varepsilon_1 2^{(3p_0-1)m}\\
&\lesssim \varepsilon_1^32^{(3p_0-3/2)m}.
\end{split}
\end{equation}
In addition, using \eqref{touse4.1} and the definition of the function $H$, we have
\begin{equation}\label{bn97.15}
\sup_{s\in[t_1,t_2]}|(\partial_s\dot{H})(\xi,s)|\lesssim \varepsilon_1^22^{3k/2}2^{-20k_+}2^{(3p_0-3/2)m}.
\end{equation}
Therefore
\begin{equation}\label{bn97.2}
\sup_{s\in[t_1,t_2]}B^0_0(\xi,s)\lesssim \varepsilon_1^32^{3k/2}2^{-20k_+}2^{(3p_0-3/2)m}\cdot 2^{k_3}.
\end{equation}

We combine now \eqref{bn94}, \eqref{bn96}, \eqref{bn97}, \eqref{bn97.1}, and \eqref{bn97.2} to conclude that
\begin{equation*}
\Big|\int_{\mathbb{R}^2\times[t_1,t_2]}e^{iH(\xi,s)}e^{is\Phi(\xi,\eta,\sigma)}\widehat{f_{k_1}^+}(\xi+\eta,s)\widehat{f_{k_2}^+}(\xi+\sigma,s)
\widehat{f_{k_3}^-}(-\xi-\eta-\sigma,s)\,d\eta d\sigma ds\Big|\lesssim \varepsilon_1^32^{(3p_0-1/2)m},
\end{equation*}
which is clearly stronger than the desired bound \eqref{bn91}.

{\bf {Case 2:}} $k_3\neq\min(k_1,k_2,k_3)$. By symmetry we may assume $k_1=\min(k_1,k_2,k_3)$. Recalling also the assumption $k_1,k_2,k_3\in[-4m,m/50-1000]$, we have
\begin{equation}\label{alo1}
k_1\in[-21m/20,-19m/20],\qquad k_2,k_3\in[-m/10,m/50-1000].
\end{equation}
Recalling the restriction $|\xi|\in[2^{k-1},2^{k+1}]$, we define
\begin{equation*}
\chi_{k,m}(\eta):=
\begin{cases}
1&\text{ if }k\geq k_1+11,\\
1-\varphi(2^{11m/10}\eta)&\text{ if }k\leq k_1+10,
\end{cases}
\end{equation*}
and notice that, as a consequence of the $L^\infty$ bound in \eqref{bn13},
\begin{equation*}
\begin{split}
\Big|\int_{\mathbb{R}^2\times[t_1,t_2]}(1-\chi_{k,m}(\eta))e^{iH(\xi,s)}e^{is\Phi(\xi,\eta,\sigma)}\widehat{f_{k_1}^+}(\xi+\eta,s)\widehat{f_{k_2}^+}(\xi+\sigma,s)
\widehat{f_{k_3}^-}(-\xi-\eta-\sigma,s)\,d\eta d\sigma ds\Big|\\
\lesssim \varepsilon_1^32^{-m/20}2^{-10k_+}.
\end{split}
\end{equation*}
Therefore, for \eqref{bn91} it suffices to prove that
\begin{equation}\label{alo1.5}
\begin{split}
\Big|\int_{\mathbb{R}^2\times[t_1,t_2]}e^{iH(\xi,s)}e^{is\Phi(\xi,\eta,\sigma)}\chi_{k,m}(\eta)\widehat{f_{k_1}^+}(\xi+\eta,s)\widehat{f_{k_2}^+}(\xi+\sigma,s)
\widehat{f_{k_3}^-}(-\xi-\eta-\sigma,s)\,d\eta d\sigma ds\Big|\\
\lesssim \varepsilon_1^32^{-2p_1m}2^{-10k_+}.
\end{split}
\end{equation}

The main observation is that the phase $\Phi$ is weakly elliptic in a suitable sense, more precisely
\begin{equation}\label{alo2}
|\Phi(\xi,\eta,\sigma)|\geq \kappa(\eta):=
\begin{cases}
2^{\min(k,k_2,k_3)/2-100}&\text{ if }k\geq k_1+11,\\
|\eta|2^{-k_1/2-100}&\text{ if }k\leq k_1+10,
\end{cases}
\end{equation}
provided that $|\xi+\eta|\in[2^{k_1-2},2^{k_1+2}]$, $|\xi+\sigma|\in[2^{k_2-2},2^{k_2+2}]$, $|\xi+\eta+\sigma|\in[2^{k_3-2},2^{k_3+2}]$, and $\chi_{k,m}(\eta)\neq 0$. The bound \eqref{alo2} is an easy consequence of the definitions and the assumptions \eqref{alo1}.

We integrate by parts in $s$ to conclude that the integral in the left-hand side of \eqref{alo1.5} is dominated by
\begin{equation*}
C[C^0(\xi)+\sum_{j=1}^2C_j(\xi)]
\end{equation*}
where, with $\dot{H}=\partial_sH$ as before,
\begin{equation*}
\begin{split}
C^0(\xi):=&\int_{t_1}^{t_2}\Big|\int_{\mathbb{R}^2}e^{is\Phi(\xi,\eta,\sigma)}\\
&\times\frac{d}{ds}\Big[\frac{\chi_{k,m}(\eta)}{\Phi(\xi,\eta,\sigma)+\dot{H}(\xi,s)}\widehat{f_{k_1}^{+}}(\xi+\eta,s)\widehat{f_{k_2}^{+}}(\xi+\sigma,s)
\widehat{f_{k_3}^{-}}(-\xi-\eta-\sigma,s)\Big]\,d\eta d\sigma\Big| ds,\\
\end{split}
\end{equation*}
and, for $j\in\{1,2\}$, 
\begin{equation*}
\begin{split}
C_j(\xi):=&\Big|\int_{\mathbb{R}^2}e^{it_j\Phi(\xi,\eta,\sigma)}\frac{\chi_{k,m}(\eta)}{\Phi(\xi,\eta,\sigma)+\dot{H}(\xi,t_j)}\widehat{f_{k_1}^{+}}(\xi+\eta,t_j)\widehat{f_{k_2}^{+}}(\xi+\sigma,t_j)
\widehat{f_{k_3}^{-}}(-\xi-\eta-\sigma,t_j)\,d\eta d\sigma\Big|.
\end{split}
\end{equation*}
Using only the $L^\infty$ bound in \eqref{bn13} and the assumptions \eqref{alo1}, we estimate, for $j\in\{1,2\}$,
\begin{equation*}
C_j(\xi)\lesssim \varepsilon_1^3\kappa^{-1}2^{k_1}2^{\min(k_2,k_3)}2^{-10\max(k_2,k_3,0)}\lesssim \varepsilon_1^32^{-m/4}2^{-10k_+}.
\end{equation*}
Letting
\begin{equation}\label{alo4} m_6(\eta,\sigma):=\frac{\chi_{k,m}(\eta)}{\Phi(\xi,\eta,\sigma)+\dot{H}(\xi,s)}\varphi_{[k_1-1,k_1+1]}(\xi+\eta)\varphi_{[k_2-1,k_2+1]}(\xi+\sigma)\varphi_{[k_3-1,k_3+1]}(\xi+\eta+\sigma),
\end{equation}
for \eqref{alo1.5} it suffices to prove that, for any $s\in[t_1,t_2]$,
\begin{equation}\label{alo5}
\begin{split}
\Big|\int_{\mathbb{R}^2}e^{is\Phi(\xi,\eta,\sigma)}\frac{d}{ds}\big[m_6(\eta,\sigma)\widehat{f_{k_1}^{+}}(\xi+\eta,s)\widehat{f_{k_2}^{+}}(\xi+\sigma,s)
\widehat{f_{k_3}^{-}}(-\xi-\eta-\sigma,s)\big]\,d\eta d\sigma\Big|\\
\lesssim 2^{-m}\varepsilon_1^32^{-2p_1m}2^{-10k_+}.
\end{split}
\end{equation}

Expanding the $d/ds$ derivative, the left-hand side of \eqref{alo5} is dominated by
\begin{equation*}
C[C^0_0(\xi,s)+C^0_1(\xi,s)+C^0_2(\xi,s)+C^0_3(\xi,s)],
\end{equation*}
where
\begin{equation}\label{alo6}
\begin{split} &C^0_0(\xi,s):=\int_{\mathbb{R}^2}\Big|\frac{(\partial_s\dot{H})(\xi,s)\chi_{k,m}(\eta)}{(\Phi(\xi,\eta,\sigma)+\dot{H}(\xi,s))^2}\widehat{f_{k_1}^+}(\xi+\eta,s)\widehat{f_{k_2}^+}(\xi+\sigma,s)
\widehat{f_{k_3}^-}(-\xi-\eta-\sigma,s)\Big|\,d\eta d\sigma,\\
&C^0_1(\xi,s):=\int_{\mathbb{R}^2}\big|m_6(\eta,\sigma)(\partial_s\widehat{f_{k_1}^+})(\xi+\eta,s)\widehat{f_{k_2}^+}(\xi+\sigma,s)
\widehat{f_{k_3}^-}(-\xi-\eta-\sigma,s)\big|\,d\eta d\sigma,\\
&C^0_2(\xi,s):=\Big|\int_{\mathbb{R}^2}e^{is\Phi(\xi,\eta,\sigma)}m_6(\eta,\sigma)\widehat{f_{k_1}^+}(\xi+\eta,s)(\partial_s\widehat{f_{k_2}^+})(\xi+\sigma,s)
\widehat{f_{k_3}^-}(-\xi-\eta-\sigma,s)\,d\eta d\sigma\Big|,\\
&C^0_3(\xi,s):=\Big|\int_{\mathbb{R}^2}e^{is\Phi(\xi,\eta,\sigma)}m_6(\eta,\sigma)\widehat{f_{k_1}^+}(\xi+\eta,s)\widehat{f_{k_2}^+}(\xi+\sigma,s)
(\partial_s\widehat{f_{k_3}^-})(-\xi-\eta-\sigma,s)\,d\eta d\sigma\Big|.
\end{split}
\end{equation}

Using \eqref{bn97.15}, the $L^\infty$ bound in \eqref{bn13}, and \eqref{alo2}, we have
\begin{equation*}
\sup_{s\in[t_1,t_2]}C^0_0(\xi,s)\lesssim \varepsilon_1^32^{-5m/4}2^{-10k_+}.
\end{equation*}
Also, using \eqref{touse4.1}, \eqref{bn13}, and \eqref{alo2},
\begin{equation*}
\sup_{s\in[t_1,t_2]}C^0_1(\xi,s)\lesssim (2^{11m/10}2^{k_1/2})\varepsilon_1^32^{-10k_+}2^{3p_0m}2^{-m/2}(2^{k_1/2}+2^{-m/2})2^{k_1}\lesssim \varepsilon_1^32^{-10k_+}2^{-7m/6}.
\end{equation*}
To estimate the remaining integrals we use Lemma \ref{touse}. Using \eqref{alo1}, \eqref{alo2}, \eqref{bn93.5} and integration by parts it is easy to see that, for any $s\in[t_1,t_2]$,
\begin{equation*}
\|\mathcal{F}^{-1}(m_6)\|_{L^1}\lesssim (2^{11m/10}2^{k_1/2})2^{p_0m}.
\end{equation*}
Therefore, using also \eqref{bn13}, \eqref{bn13.5}, and \eqref{touse4},
\begin{equation*}
\sup_{s\in[t_1,t_2]}[C^0_2(\xi,s)+C^0_3(\xi,s)]\lesssim 2^{11m/10}2^{k_1/2}2^{p_0m}\cdot\varepsilon_12^{k_1/2}\cdot\varepsilon_12^{-m/2}\cdot \varepsilon_12^{(3p_0-1)m}\lesssim \varepsilon_1^32^{-10k_+}2^{-11m/10}.
\end{equation*}
The desired bound \eqref{alo5} follows, which completes the proof of the lemma.
\end{proof}

\begin{lem}\label{bb14}
The bounds \eqref{bn21} hold provided that
\begin{equation}\label{bn140}
\min(k_1,k_2,k_3)+\mathrm{med}(k_1,k_2,k_3)\geq -51m/50.
\end{equation}
\end{lem}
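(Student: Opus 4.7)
The three signatures $(+,+,+)$, $(+,-,-)$, $(-,-,-)$ correspond to non-gauge-invariant nonlinearities whose phases admit no space-time resonance on the output frequency sphere (unlike the $(+,+,-)$ case, where $\eta=\sigma=0$ is the critical point that forces the logarithmic correction \eqref{bn2}). The plan is therefore to integrate by parts in $s$, following the template of Case 2 of Lemma \ref{bb13}, with the divisor $\psi+\dot{H}$ effectively replaced by $\psi$ alone since no resonance needs to be cancelled.

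The first and crucial step is a uniform lower bound on the phase $\psi^{\iota_1,\iota_2,\iota_3}$ on the support of the integrand. For $(-,-,-)$ this is immediate: $\psi^{-,-,-} = \Lambda(\xi)+\Lambda(\xi-\eta)+\Lambda(\eta-\sigma)+\Lambda(\sigma) \gtrsim 2^{\max(k,k_1,k_2,k_3)/2}$. For $(+,+,+)$, applying the elementary identity $\sqrt{a}+\sqrt{b}-\sqrt{a+b}=2\sqrt{ab}/(\sqrt{a}+\sqrt{b}+\sqrt{a+b})\geq \sqrt{\min(a,b)}/3$ (valid for $a,b>0$) iteratively, together with a short case analysis on the signs of $\xi-\eta$, $\eta-\sigma$, $\sigma$, would give $|\psi^{+,+,+}|\gtrsim 2^{\min(k_1,k_2,k_3)/2}$. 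For $(+,-,-)$ a parallel but more delicate sign analysis should yield $|\psi^{+,-,-}|\gtrsim 2^{\min(k,k_1,k_2,k_3)/2}$, where the possibility of $k$ entering the minimum corresponds to the near-cancellation $\xi\approx 0$, in which case $\Lambda(\xi)=|\xi|^{1/2}$ itself controls the phase from below. Under hypothesis \eqref{bn140} combined with $k,k_i\leq m/50$, this lower bound is at worst $\gtrsim 2^{-13m/25}$.

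With the phase bound in hand, I would integrate by parts in $s$ using $e^{is\psi}=(i\psi)^{-1}\partial_s e^{is\psi}$, as in \eqref{bn94}; note that the small perturbation $\dot{H}$ may be dropped from the denominator because $|\dot{H}|\lesssim \varepsilon_1^2 2^{-m}$ by \eqref{bn93.5} is dwarfed by $|\psi|$. The boundary terms at $s=t_1,t_2$ are handled by Lemma \ref{touse} with symbol $m_\ast = 1/\psi$ satisfying $\|\mathcal{F}^{-1}(m_\ast)\|_{L^1}\lesssim 2^{-\min/2}$ and the $L^\infty$ bound from \eqref{bn13} on the three profiles; this yields a contribution $\lesssim \varepsilon_1^3\, 2^{\mathrm{med}+\min}\,2^{-\min/2}\,2^{-10k_+}$, comfortably below $\varepsilon_1^3\,2^{-2p_1 m}\,2^{-10k_+}$ under \eqref{bn140}. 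The bulk terms in which $\partial_s$ lands on one of the Fourier profiles $\widehat{f_{k_j}^{\iota_j}}$ are controlled by the same Lemma \ref{touse} combined with the dispersive bound \eqref{bn13.5} and the time-derivative estimate \eqref{touse4}, which together produce a factor $2^{(3p_0-1)m}$, more than enough to absorb the $2^{-\min/2}$ loss. The term coming from $\partial_s e^{iH}$ is negligible by \eqref{bn93.5}.

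The main obstacle is the phase lower bound for the $(+,-,-)$ signature, where the sign interactions are intricate and one must rule out cancellations such as $\Lambda(\xi-\eta)\approx\Lambda(\eta-\sigma)+\Lambda(\sigma)+\Lambda(\xi)$; as in Lemma \ref{bb13}, it may be necessary to introduce a cutoff analogous to $\chi_{k,m}$ from \eqref{alo1.5} to separate the borderline regime where $|\xi|$ is anomalously small (using $|\psi|\gtrsim 2^{k/2}$) from the generic regime where $k\sim\max(k_i)$ (using $|\psi|\gtrsim 2^{\min(k_i)/2}$).
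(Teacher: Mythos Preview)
Your overall strategy---integrate by parts in $s$ using ellipticity of the phase, then estimate boundary and bulk terms via Lemma~\ref{touse} together with \eqref{bn13}, \eqref{bn13.5}, \eqref{touse4}---is exactly the paper's approach. The one substantive difference is the phase lower bound: the paper establishes the uniform estimate
\[
|\Phi^{\iota_1,\iota_2,\iota_3}(\xi,\eta,\sigma)|\gtrsim 2^{\mathrm{med}(k_1,k_2,k_3)/2}
\]
for \emph{all three} signatures $(+,+,+),(+,-,-),(-,-,-)$, with no dependence on $k$. (For $(+,+,+)$ your own subadditivity argument actually gives $\mathrm{med}$ rather than $\min$ if you choose the best pair; for $(+,-,-)$ the identity $a+d=b+c$ among the four arguments of $\Lambda$ rules out the cancellation you feared.) With this cleaner bound your worry about $(+,-,-)$ when $|\xi|$ is small evaporates, and no cutoff analogous to $\chi_{k,m}$ is needed. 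The paper also keeps $\dot{H}$ in the denominator $\Phi+\dot{H}$ rather than dropping it; since $|\dot{H}|\lesssim\varepsilon_1^2 2^{-m}\ll 2^{\mathrm{med}/2}$ under \eqref{bn140}, this is a cosmetic choice and your version works equally well.
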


\begin{proof}[Proof of Lemma \ref{bb14}] After changes of variables, it suffices to prove that
\begin{equation}\label{bn141}
\begin{split}
\Big|\int_{\mathbb{R}^2\times[t_1,t_2]}e^{iH(\xi,s)}e^{is\Phi^{\iota_1,\iota_2,\iota_3}(\xi,\eta,\sigma)}\widehat{f_{k_1}^{\iota_1}}(\xi+\eta,s)\widehat{f_{k_2}^{\iota_2}}(\xi+\sigma,s)
\widehat{f_{k_3}^{\iota_3}}(-\xi-\eta-\sigma,s)\,d\eta d\sigma ds\Big|\\
\lesssim \varepsilon_1^32^{-2p_1m}2^{-10k_+},
\end{split}
\end{equation}
where $(\iota_1,\iota_2,\iota_3)\in\{(+,+,+),(+,-,-),(-,-,-)\}$ and 
\begin{equation*}
\begin{split}
&\Phi^{\iota_1,\iota_2,\iota_3}(\xi,\eta,\sigma)=\Lambda(\xi)-\iota_1\Lambda(\xi+\eta)-\iota_2\Lambda(\xi+\sigma)-\iota_3\Lambda(\xi+\eta+\sigma),\\
&H(\xi,s)=\frac{2c_0}{\pi}|\xi|^{3/2}\int_0^s|\widehat{f}(\xi,r)|^2 \frac{dr}{r+1}.
\end{split}
\end{equation*}

The main observation is that the phases $\Phi^{\iota_1,\iota_2,\iota_3}$ are elliptic, i.e.
\begin{equation}\label{bn142}
|\Phi^{\iota_1,\iota_2,\iota_3}(\xi,\eta,\sigma)|\geq 2^{\mathrm{med}(k_1,k_2,k_3)/2-100},
\end{equation}
provided that $|\xi+\eta|\in[2^{k_1-2},2^{k_1+2}]$, $|\xi+\sigma|\in[2^{k_2-2},2^{k_2+2}]$, $|\xi+\eta+\sigma|\in[2^{k_3-2},2^{k_3+2}]$, and $(\iota_1,\iota_2,\iota_3)\in\{(+,+,+),(+,-,-),(-,-,-)\}$. Letting $\dot{H}(\xi,s)=(\partial_sH)(\xi,s)$, we notice that
\begin{equation}\label{bn143}
|\dot{H}(\xi,s)|\lesssim \varepsilon_1^22^{3k/2}2^{-20k_+}2^{-m}.
\end{equation}

As in the proof of Lemma \ref{bb13}, we integrate by parts in $s$ to conclude that the integral in the left-hand side of \eqref{bn141} is dominated by
\begin{equation*}
D^0(\xi)+\sum_{j=1}^2D_j(\xi)
\end{equation*}
where
\begin{equation}\label{bn144}
\begin{split}
D^0(\xi):=&\int_{t_1}^{t_2}\Big|\int_{\mathbb{R}^2}e^{is\Phi^{\iota_1,\iota_2,\iota_3}(\xi,\eta,\sigma)}\\
&\times\frac{d}{ds}\Big[\frac{1}{\Phi^{\iota_1,\iota_2,\iota_3}(\xi,\eta,\sigma)+\dot{H}(\xi,s)}\widehat{f_{k_1}^{\iota_1}}(\xi+\eta,s)\widehat{f_{k_2}^{\iota_2}}(\xi+\sigma,s)
\widehat{f_{k_3}^{\iota_3}}(-\xi-\eta-\sigma,s)\Big]\,d\eta d\sigma\Big| ds,\\
\end{split}
\end{equation}
and, for $j\in\{1,2\}$, 
\begin{equation}\label{bn145}
\begin{split}
D_j(\xi):=&\Big|\int_{\mathbb{R}^2}e^{it_j\Phi^{\iota_1,\iota_2,\iota_3}(\xi,\eta,\sigma)}\\
&\times\frac{1}{\Phi^{\iota_1,\iota_2,\iota_3}(\xi,\eta,\sigma)+\dot{H}(\xi,t_j)}\widehat{f_{k_1}^{\iota_1}}(\xi+\eta,t_j)\widehat{f_{k_2}^{\iota_2}}(\xi+\sigma,t_j)
\widehat{f_{k_3}^{\iota_3}}(-\xi-\eta-\sigma,t_j)\,d\eta d\sigma\Big|.
\end{split}
\end{equation}

Let
\begin{equation*}
\begin{split}
 m_7(\eta,\sigma):=&\frac{1}{\Phi^{\iota_1,\iota_2,\iota_3}(\xi,\eta,\sigma)+\dot{H}(\xi,s)}\\
 &\times \varphi_{[k_1-1,k_1+1]}(\xi+\eta)\varphi_{[k_2-1,k_2+1]}(\xi+\sigma)\varphi_{[k_3-1,k_3+1]}(\xi+\eta+\sigma).
\end{split}
\end{equation*}
Using \eqref{bn140}, \eqref{bn142}, \eqref{bn143} and integration by parts it is easy to see that, for any $s\in[t_1,t_2]$,
\begin{equation}\label{bn146}
\|\mathcal{F}^{-1}(m_7)\|_{L^1}\lesssim 2^{-\mathrm{med}(k_1,k_2,k_3)/2}.
\end{equation}
Therefore we can apply Lemma \ref{touse} with
\begin{equation*}
\begin{split}
&\widehat{f}(\eta):=e^{-it_j\iota_1\Lambda(\xi+\eta)}\widehat{f_{k_1}^{\iota_1}}(\xi+\eta,t_j),\\
&\widehat{g}(\sigma):=e^{-it_j\iota_2\Lambda(\xi+\sigma)}\widehat{f_{k_2}^{\iota_2}}(\xi+\sigma,t_j),\\
&\widehat{h}(\theta):=e^{-it_j\iota_3\Lambda(\xi-\theta)}\widehat{f_{k_3}^{\iota_3}}(-\xi+\theta,t_j),
\end{split}
\end{equation*}
and then use \eqref{bn13} and \eqref{bn13.5}, to conclude that
\begin{equation}\label{bn147}
\begin{split}
D_j(\xi)&\lesssim 2^{-\mathrm{med}(k_1,k_2,k_3)/2}\cdot\varepsilon_1 2^{-m/2}\cdot \varepsilon_1 2^{\min(k_1,k_2,k_3)/2}\cdot \varepsilon_1 2^{\mathrm{med}(k_1,k_2,k_3)/2}\\
&\lesssim \varepsilon_1^32^{\min(k_1,k_2,k_3)/2}2^{-m/2}.
\end{split}
\end{equation}
for $j\in\{1,2\}$.

In addition, we estimate
\begin{equation}\label{bn148}
\begin{split}
&D^0(\xi)\lesssim 2^m\sup_{s\in[t_1,t_2]}[D^0_0(\xi,s)+D^0_1(\xi,s)+D^0_2(\xi,s)+D^0_3(\xi,s)],\\  &D^0_0(\xi,s):=\int_{\mathbb{R}^2}\Big|\frac{(\partial_s\dot{H})(\xi,s)}{(\Phi^{\iota_1,\iota_2,\iota_3}(\xi,\eta,\sigma)+\dot{H}(\xi,s))^2}\widehat{f_{k_1}^{\iota_1}}(\xi+\eta,s)\widehat{f_{k_2}^{\iota_2}}(\xi+\sigma,s)
\widehat{f_{k_3}^{\iota_3}}(-\xi-\eta-\sigma,s)\Big|\,d\eta d\sigma,\\
&D^0_1(\xi,s):=\Big|\int_{\mathbb{R}^2}e^{is\Phi^{\iota_1,\iota_2,\iota_3}(\xi,\eta,\sigma)}m_7(\eta,\sigma)(\partial_s\widehat{f_{k_1}^{\iota_1}})(\xi+\eta,s)\widehat{f_{k_2}^{\iota_2}}(\xi+\sigma,s)
\widehat{f_{k_3}^{\iota_3}}(-\xi-\eta-\sigma,s)\,d\eta d\sigma\Big|,\\
&D^0_2(\xi,s):=\Big|\int_{\mathbb{R}^2}e^{is\Phi^{\iota_1,\iota_2,\iota_3}(\xi,\eta,\sigma)}m_7(\eta,\sigma)\widehat{f_{k_1}^{\iota_1}}(\xi+\eta,s)(\partial_s\widehat{f_{k_2}^{\iota_2}})(\xi+\sigma,s)
\widehat{f_{k_3}^{\iota_3}}(-\xi-\eta-\sigma,s)\,d\eta d\sigma\Big|,\\
&D^0_3(\xi,s):=\Big|\int_{\mathbb{R}^2}e^{is\Phi^{\iota_1,\iota_2,\iota_3}(\xi,\eta,\sigma)}m_7(\eta,\sigma)\widehat{f_{k_1}^{\iota_1}}(\xi+\eta,s)\widehat{f_{k_2}^{\iota_2}}(\xi+\sigma,s)
(\partial_s\widehat{f_{k_3}^{\iota_3}})(-\xi-\eta-\sigma,s)\,d\eta d\sigma\Big|.
\end{split}
\end{equation}

As before, we combine Lemma \ref{touse}, \eqref{bn146}, and the bounds \eqref{bn13}, \eqref{bn13.5}, and \eqref{touse4} to conclude that
\begin{equation}\label{bn148.1}
\begin{split}
\sup_{s\in[t_1,t_2]}[D^0_1(\xi,s)+D^0_2(\xi,s)+D^0_3(\xi,s)]&\lesssim 2^{-\mathrm{med}(k_1,k_2,k_3)/2}\cdot\varepsilon_1 2^{-m/2}\cdot \varepsilon_12^{\mathrm{med}(k_1,k_2,k_3)/2}\cdot \varepsilon_1 2^{(3p_0-1)m}\\
&\lesssim \varepsilon_1^32^{(3p_0-3/2)m}.
\end{split}
\end{equation}
In addition, using the $L^\infty$ bound in \eqref{touse4} and the definition of the function $H$, we have
\begin{equation*}
\sup_{s\in[t_1,t_2]}|(\partial_s\dot{H})(\xi,s)|\lesssim \varepsilon_1^22^{3k/2}2^{-20k_+}2^{(3p_0-3/2)m}.
\end{equation*}
Therefore
\begin{equation}\label{bn148.2}
\sup_{s\in[t_1,t_2]}D^0_0(\xi,s)\lesssim \varepsilon_1^32^{3k/2}2^{-20k_+}2^{(3p_0-3/2)m}\cdot 2^{\min(k_1,k_2,k_3)}.
\end{equation}

We combine now \eqref{bn147}--\eqref{bn148.2} to conclude that the left-hand side of \eqref{bn141} is dominated by $C\varepsilon_1^32^{-m/3}$, 
which is clearly stronger than the desired bound \eqref{bn91}. This completes the proof of the lemma.
\end{proof}


\begin{thebibliography}{100}

\bibitem{Barab}
 Barab J. E.
\newblock Non-existence of asymptotically free solutions for nonlinear \S equation.
\newblock {\em Journal of Math. Phys.}, 25 (1984), no. 11, 3270-3273.




\bibitem{Ch} Christodoulou D. 
\newblock{Global solutions of nonlinear hyperbolic equations for small initial data.} 
\newblock{\em Comm. Pure Appl. Math.} 39 (1986), no. 2, 267-282.










\bibitem{CW}
Craig, W. and Worfolk, P.
\newblock An integrable normal form for water waves in infinite depth.
\newblock {\em Phys. D} 84 (1995), no. 3-4, 513--531



\bibitem{Craig}
Craig, W.
\newblock Birkhoff normal forms for water waves. {\em Mathematical problems in the theory of water waves (Luminy, 1995)}, 57-74.
\newblock {\em Contemp. Math.}, 200, Amer. Math. Soc., Providence, RI, 1996. 




\bibitem{DelortKG1d} 
Delort, J.M.
\newblock Existence globale et comportement asymptotique pour l' \'{e}quation de Klein-Gordon quasi-lin\'{e}aire \`{a} donn\'{e}es
petites en dimension 1. 
\newblock {\em Ann. Sci. \'{E}cole Norm. Sup.} 34 (2001) 1-61.
\newblock Erratum: ``Global existence and asymptotic behavior for the quasilinear Klein-Gordon equation with small data in dimension 1''
\newblock {\em Ann. Sci. \'{E}cole Norm. Sup.} (4) 39 (2006), no. 2, 335-345







\bibitem{GMS1}
Germain, P., Masmoudi, N. and Shatah, J.
\newblock {Global solutions for quadratic Schr\"odinger equations in dimension 3}.
\newblock {\em Int. Math. Res. Not.} (2009), no. 3, 414-432.



\bibitem{GMS2}
Germain P., Masmoudi, N. and Shatah, J.
\newblock {Global solutions for the gravity surface water waves equation in dimension 3}.
\newblock {\em Annals of Math.}, to appear.









\bibitem{GOVNLS}
Ginibre, J., Ozawa, T. and Velo, G.
\newblock On the existence of the wave operators for a class of nonlinear \S equations.
\newblock {\em Ann. Inst. H. Poincar\'{e} Phys. Th\'{e}or.} 60 (1994), no. 2, 211-239



\bibitem{GuoHuo}
Guo, B. and Huo, Z.
\newblock Global well-posedness for the fractional nonlinear Schr\"odinger equation.
\newblock {\em Comm. PDE} 36 (2011), no. 2, 247-255.



\bibitem{HN}
Hayashi, N. and Naumkin, P.I.
\newblock Asymptotics for large time of solutions to the nonlinear Schr\"odinger and Hartree equations.
\newblock {\em Amer. J. Math.}, 120 (1998), no. 2, 369-389.








\bibitem{HNBO}
Hayashi, N. and Naumkin, P.
\newblock Large time asymptotics of solutions to the generalized Benjamin-Ono equation.
\newblock {\em Trans. Amer. Math. Soc.}, 351 (1999), no. 1, 109-130.




\bibitem{HNKdV}
Hayashi, N. and Naumkin, P.
\newblock Large time behavior of solutions for the modified Korteweg-de Vries equation.
\newblock {\em Int. Math. Res. Not.}, (1999), no. 8, 395-418.










\bibitem{HNcubicNLSodd}
Hayashi, N. and Naumkin, P.
\newblock Asymptotics of odd solutions for cubic nonlinear \S equations.
\newblock {\em J. Differential Equations}, 246 (2009), no. 4, 1703-1722.







\bibitem{Joh} 
John F. 
\newblock{Blow-up of solutions of nonlinear wave equations in three space dimensions} 
\newblock{\em Manuscripta Math.}, 28 (1979), no. 1-3, 235-268.




\bibitem{KP}
Kato, J. and Pusateri, F.
\newblock {A new proof of long range scattering for critical nonlinear
  Schr\"odinger equations}.
\newblock {\em Diff. Int. Equations}, 24 (2011), no. 9-10, 923-940.



\bibitem{Kl}
Klainerman, S.
\newblock Uniform decay estimates and the Lorentz invariance of the classical wave equation. 
\newblock {\em  Comm. Pure Appl. Math.}, 38 (1985), no. 3, 321-332.




\bibitem{Kl2} Klainerman, S. 
\newblock{The null condition and global existence to nonlinear wave equations. 
Nonlinear systems of partial differential equations in applied mathematics, Part 1 (Santa Fe, N.M., 1984).} 
\newblock{\em Lectures in Appl. Math.} 23, 293-326,  Amer. Math. Soc., Providence, RI, 1986.



\bibitem{Lannes}
Lannes, D.
\newblock Well-posedness of the water waves equations.
\newblock {\em Journal of Amer. Math. Soc.} 18 (2005), no. 3, 605-654.





\bibitem{Laskin2}
Laskin, N.
\newblock Fractional \S equation.
\newblock {\em Phys. Rev. E} 66 (2002), no. 5, 056108, 7 pp. 



\bibitem{Lindblad}
Lindblad, H.
\newblock Well-posedness for the motion of an incompressible liquid with free surface boundary.
\newblock {\em Ann. of Math.}, 162 (2005), no. 1, 109-194.



\bibitem{NakaNLS}
Nakanishi, K. 
\newblock Asymptotically-free solutions for the short-range nonlinear \S equation. 
\newblock {\em SIAM J. Math. Anal.} 32 (2001), no. 6, 1265-1271.



\bibitem{ozawa}
Ozawa, T.
\newblock {Long range scattering for nonlinear Schr\"odinger equations in one space dimension}.
\newblock {\em Comm. Math. Phys.}, 139 (1991), no. 3, 479-493.



\bibitem{Sh} Shatah, J. 
\newblock Normal forms and quadratic nonlinear Klein-Gordon equations. 
\newblock {\em Comm. Pure Appl. Math.}, 38 (1985), no.5, 685-696.




\bibitem{ShZ3}
Shatah, J. and Zeng, C.
\newblock Local well-posedness for the fluid interface problem.
\newblock {\em Arch. Ration. Mech. Anal.} 199 (2011), no. 2, 653-705.




\bibitem{SulemBook}
Sulem C. and Sulem. 
\newblock Self-focusing and wave collapse.
\newblock {\em Book}, Springer 1993.



\bibitem{Wuloc1}
Wu, S. 
\newblock{Well-posedness in Sobolev spaces of the full water wave problem in 2-D}. 
\newblock{\em Invent. Math.} 130 (1997), no. 1, 39-72.


\bibitem{Wuloc2}
Wu, S. 
\newblock{Well-posedness in Sobolev spaces of the full water wave problem in 3-D}. 
\newblock {\em J. Amer. Math. Soc.} 12 (1999), no. 2, 445-495.


\bibitem{WuAG}
Wu, S.
\newblock {Almost global wellposedness of the 2-D full water wave problem}.
\newblock {\em Invent. Math.}, 177 (2009), no. 1, 45-135.



\bibitem{Wu3DWW}
Wu, S.
\newblock {Global wellposedness of the 3-D full water wave problem}.
\newblock {\em Invent. Math.}, 184 (2011), no. 1, 125-220.



\bibitem{WuNLS}
Wu, S.
\newblock {A rigorous justification of the modulation approximation to the 2D full water wave problem}.
\newblock {\em arXiv:1101.0545}, 2011.


\end{thebibliography}
\end{document}